\documentclass[11pt]{amsart}

\usepackage[margin=0.9in]{geometry}
\usepackage{amsmath,amssymb,amsthm,mathtools}
\usepackage{mathrsfs}
\usepackage{enumitem}
\usepackage{xcolor-material}
\usepackage[unicode, psdextra, colorlinks=true, linkcolor=GoogleBlue, citecolor=GoogleGreen, urlcolor=GoogleBlue, linktocpage]{hyperref}
\usepackage[nameinlink,capitalize]{cleveref}

\newtheorem{theorem}{Theorem}[section]

\newtheorem{proposition}[theorem]{Proposition}
\newtheorem{prop}[theorem]{Proposition}
\newtheorem{corollary}[theorem]{Corollary}
\newtheorem{cor}[theorem]{Corollary}
\newtheorem{lemma}[theorem]{Lemma}

\theoremstyle{definition}

\newtheorem{definition}[theorem]{Definition}
\newtheorem{de}[theorem]{Definition}

\newtheorem{example}[theorem]{Example}

\theoremstyle{remark}
\newtheorem{remark}[theorem]{Remark}
\newtheorem{rmk}[theorem]{Remark}

\crefname{theorem}{Theorem}{Theorems}
\crefname{statement}{Statement}{Statements}
\crefname{proposition}{Proposition}{Propositions}
\crefname{corollary}{Corollary}{Corollaries}
\crefname{lemma}{Lemma}{Lemmas}
\crefname{definition}{Definition}{Definitions}
\crefname{example}{Example}{Examples}
\crefname{remark}{Remark}{Remarks}
\Crefname{statement}{Statement}{Statements}
\Crefname{proposition}{Proposition}{Propositions}
\Crefname{theorem}{Theorem}{Theorems}
\Crefname{corollary}{Corollary}{Corollaries}
\Crefname{lemma}{Lemma}{Lemmas}
\Crefname{definition}{Definition}{Definitions}
\Crefname{example}{Example}{Examples}
\Crefname{remark}{Remark}{Remarks}
\def\PD{\mathrm{PD}}
\def\Z{\mathbb{Z}}
\def\fun{\rightarrow}
\def\iso{\cong}

\def\a{\alpha}
\def\A{\mathbb{A}}
\def\red{\mathrm{red}}
\def\la{\langle}
\def\ra{\rangle}
\def\CC{\C^*}
\def\MZ{Minets--\v{Z}ivanovi\'{c}}
\newcommand{\Hilb}{\operatorname{Hilb}}
\newcommand{\Sym}{\operatorname{Sym}}
\def\C{\mathbb C}
\newcommand{\Q}{\mathbb Q}
\newcommand{\BM}{\operatorname{BM}}
\newcommand{\Gr}{\operatorname{Gr}}
\newcommand{\Span}{\operatorname{Span}}

\title{Perverse filtration on Hilbert schemes via upward flow}

\author{Filip Živanović}
\address{F. T. Živanović,
Simons Center for Geometry and Physics,
Stony Brook, NY 11794-3636, U.S.A.}
\email{fzivanovic@scgp.stonybrook.edu}

\date{}

\begin{document}

\begin{abstract}
We explicitly compute the perverse Leray filtration on the top cohomology of
the Hilbert scheme of points on $\Sigma\times\mathbb{C}$, for any connected smooth
projective curve $\Sigma$. The computation is carried out in the natural
basis given by the $\mathbb{C}^*$-upward-flow cycles. The result is described by a
simple symmetric-function dictionary: upward-flow classes correspond to
products of complete homogeneous symmetric functions, while the
perverse-homogeneous basis corresponds to products of power-sum symmetric
functions. This gives an explicit triangular change-of-basis between the two
bases.
\end{abstract}




\vspace*{-0.03cm}
\maketitle

\setcounter{tocdepth}{1}
\tableofcontents

\section{Introduction}

Perverse Leray filtration, introduced by de Cataldo--Migliorini in \cite{deCataldoMigliorini2005} in the language of perverse sheaves, is one of the basic structures attached to maps of
algebraic varieties.  Given a proper morphism of complex algebraic varieties
\[
        f:X\to Y,
\]
the perverse filtration on \(H^*(X):=H^*(X;\Q)\) measures how the cohomology of
\(X\) is distributed over the base \(Y\).  In the case when $X$ is a moduli
space of Higgs bundles, and $f$ its Hitchin fibration, this filtration plays a particularly important role. Namely, 
\textit{the \(P=W\)
phenomenon}, conjectured in \cite{de2012topology}, and proved in recent years for many general cases \cite{ShenZhang,hausel2022p, maulik2024p},
predicts that the perverse filtration for the Hitchin map is
identified, under nonabelian Hodge theory, with the weight filtration on the
corresponding character variety.

The present paper studies and computes perverse filtration on Hilbert schemes.
Let
\(\Sigma\) be a connected smooth projective curve 
and set
\[
        M_n(\Sigma):=\Hilb^n(\Sigma \times \C).
\]
We consider the map
\[
        h_n:M_n(\Sigma)
        \xrightarrow{\rho}
        \Sym^n(\Sigma\times\C)
        \xrightarrow{\Sym^n(p)}
        \Sym^n(\C),
        \qquad p:\Sigma\times\C\to\C,
\]
where \(\rho\) is the Hilbert--Chow morphism. As shown in \cite{groechenig2014hilbert}, when 
$\Sigma=E$ is an elliptic curve, the space $M_n(E)$ is a particular case of a (parabolic) 
Higgs 
moduli space and $h_n$ is its Hitchin map. 

In the elliptic case, Shen--Zhang proved the \(P=W\) theorem in this
setting and, in particular, computed the ranks of the perverse filtration.  
Denoting by
$\lambda=(\lambda_1,\dots,\lambda_r) \vdash n$ a partition of $n$ and by
$\ell(\lambda)=r$ its length, their result yields:

\begin{prop}[Rank formula, \cite{ShenZhang}]
\label[prop]{prop:intro-sz-rank}
For \(n\le k\le 2n\),
\[
        \dim P_{k}H^{2n}(M_n(\Sigma))
        =
        \#\{\lambda\vdash n\mid \ell(\lambda)\le k-n\}.
\]
\end{prop}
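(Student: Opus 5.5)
We need to prove the rank formula for the perverse filtration on $H^{2n}(M_n(\Sigma))$ with respect to $h_n$, for an arbitrary curve $\Sigma$. The plan is to decompose $Rh_{n*}\Q$ via the decomposition theorem, stratum by stratum over $\Sym^n(\C)$, and then read off perversities.

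First, the structure of the map. The Hilbert–Chow morphism $\rho$ is semismall, and by Göttsche–Soergel (de Cataldo–Migliorini) we have
\[
R\rho_*\Q[2n]\iso\bigoplus_{\lambda\vdash n}\iota_{\lambda*}\Q_{\overline{S_\lambda}}[2\ell(\lambda)],
\]
where $\overline{S_\lambda}\iso\Sym^{\lambda}(\Sigma\times\C)$ is the normalized closure of the stratum of type $\lambda$. Writing $\lambda=1^{a_1}2^{a_2}\cdots$, we have $\Sym^\lambda(S)=\prod_i\Sym^{a_i}(S)$.

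Composing with $\Sym^n(p)$ factors through the map
\[
q_\lambda:\prod_i\Sym^{a_i}(\Sigma\times\C)\to\prod_i\Sym^{a_i}(\C)\to\Sym^n(\C),
\]
where the last arrow is finite. The first arrow is a quotient by a finite group of the product map $(\Sigma\times\C)^{\ell}\to\C^{\ell}$, with $\ell=\ell(\lambda)$, which is smooth and proper with fibre $\Sigma^{\ell}$. Its direct image therefore splits, by Deligne, into shifted local systems $R^j$ in degrees $0\le j\le 2\ell$. Since the base $\C^\ell$ is simply connected, these are constant before taking finite-group invariants. Pushing forward along finite maps preserves perversity. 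Hence the summand indexed by $\lambda$ contributes to perverse degree (after normalization) $j$ the group
\[
H^{j}\bigl(\Sigma^{\ell}\bigr)^{\mathrm{Stab}},
\]
placed in total cohomological degree $2(n-\ell)+j$, since the base $\Sym^\lambda(\C)$ is contractible. Here $\mathrm{Stab}=\prod_i\mathfrak S_{a_i}$ and the cohomology carries a sign-free action.

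Next we extract the degree-$2n$ part. Its contribution from $\lambda$ is $H^{2\ell}(\Sigma^\ell)^{\mathrm{Stab}}$, the top class, which is one-dimensional. This recovers $\dim H^{2n}=p(n)$. The perverse degree of this piece is the shift relative to the normalization: a perverse sheaf sitting in fibre degree $j$ over a base of dimension $\ell$ contributes perversity $j+\ell+(\text{offset})$, and $j=2\ell$ gives perversity $n+\ell$. I will fix the convention so that $P_kH^{2n}$ ranges over $n\le k\le 2n$ exactly as in the statement. The perversity of the $\lambda$-summand is then $n+\ell(\lambda)$, so $P_k$ is spanned by the summands with $\ell(\lambda)\le k-n$. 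This gives the claimed count.

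The main obstacle is the bookkeeping of shifts. Two things must be checked: that the contribution of the top fibre class $H^{2\ell}$ has perversity exactly $n+\ell(\lambda)$ (and not, e.g., $2n-\ell$), and that the filtration is indexed consistently with the statement's range $n\le k\le 2n$. I would verify this on the extreme cases. For $\lambda=(n)$, the small diagonal stratum has base $\C$ and fibre $\Sigma$; its class should lie in the lowest piece $P_{n+1}$, with $\ell=1$. For $\lambda=1^n$, the open stratum's top class $[\Sigma]^{n}$ should be pulled back from the base's ample class and be maximal, lying in $P_{2n}$. A secondary point is that the decomposition by $\lambda$ is a direct sum of perverse complexes. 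The perverse filtration is then computed summand-wise, so no mixing between different $\lambda$ occurs; this follows from the decomposition theorem.
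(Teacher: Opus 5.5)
Your overall strategy (Göttsche--Soergel for $\rho$, the Künneth splitting of $\Sym^\lambda(\Sigma\times\C)\to\Sym^\lambda\C$, then the finite map $\Sym^\lambda\C\to\Sym^n\C$) is sound. It is a genuinely different route from the paper, which does not prove this proposition: it cites Shen--Zhang and uses the rank formula only as a black box. However, there is a gap at exactly the step that produces the count. You never compute the perversity of the $\lambda$-line in $H^{2n}$.

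\begin{itemize}
\item Your formula ``$j+\ell+(\text{offset})$'' with $j=2\ell$ gives $3\ell+c$ for a $\lambda$-independent offset $c$, not $n+\ell$.
\item You cannot ``fix the convention''. The normalization is pinned down by $r(h_n)=n$. Whether the $\lambda$-line sits in perversity $n+\ell(\lambda)$ or, say, $2n-\ell(\lambda)$ (your own alternative, which gives a different count) is a computation.
\item The extreme-case checks presuppose the answer. The one for $1^n$ is also wrong as stated: the base $\Sym^n\C$ is contractible, and classes pulled back from a base lie in the \emph{lowest} perverse pieces. The $1^n$-line is maximal because it is the top fibre-degree class $\omega^{\boxtimes n}$, $\omega\in H^2(\Sigma)$.
\end{itemize}

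The missing idea is that the offset depends on $\lambda$, through the G\"ottsche--Soergel shift $[2\ell(\lambda)]$. Put $K:=Rh_{n*}\Q_{M_n}[2n]$, so $P_kH^{2n}(M_n)=\mathrm{Im}\bigl(\mathbb H^0({}^p\tau_{\le k-n}K)\to\mathbb H^0(K)\bigr)$. Write $\ell=\ell(\lambda)$, $G=\prod_i\mathfrak S_{a_i}$, $q:\C^\ell\to\C^\ell/G=\Sym^\lambda\C$, and let $f_\lambda:\Sym^\lambda\C\to\Sym^n\C$ be the finite map. Your decomposition then reads
\[
K\cong\bigoplus_{\lambda\vdash n}\bigoplus_{j=0}^{2\ell} f_{\lambda*}\bigl(\mathcal L_{\lambda,j}[\ell]\bigr)[\ell-j],
\qquad
\mathcal L_{\lambda,j}:=\bigl(q_*\Q_{\C^\ell}\otimes H^j(\Sigma^\ell)\bigr)^G .
\]
Each $f_{\lambda*}(\mathcal L_{\lambda,j}[\ell])$ is perverse. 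Indeed, it is a $G$-invariant direct summand of $(f_\lambda q)_*\Q_{\C^\ell}[\ell]\otimes H^j(\Sigma^\ell)$, and $f_\lambda q$ is finite.

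So the $(\lambda,j)$-piece sits in perverse degree $j-\ell$ of $K$. In normalized terms its perversity is $j+(n-\ell(\lambda))$: the summand-internal $j$ plus the offset $r(h_n)-r(g_\lambda)=n-\ell(\lambda)$. Its contribution to $\mathbb H^0(K)=H^{2n}(M_n)$ is $\bigl(H^{2\ell-j}(\C^\ell)\otimes H^j(\Sigma^\ell)\bigr)^G$. This vanishes unless $j=2\ell$, where it is the line $H^{2\ell}(\Sigma^\ell)^G\cong\Q$ (there are no Koszul signs in even degree, and $\Sigma$ is connected). Hence the $\lambda$-line has normalized perversity $n+\ell(\lambda)$. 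Since ${}^p\tau_{\le m}$ commutes with finite direct sums, the $\lambda$-line lies in $P_k$ if and only if $\ell(\lambda)\le k-n$, which is the formula.

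One minor correction: the $\lambda$-summands of $K$ are not perverse, as you state, but sums of shifted perverse sheaves. It is additivity of ${}^p\tau$ that makes the summand-wise computation legitimate.

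With this computation inserted, your argument is a self-contained proof valid for every connected $\Sigma$. It uses only $H^2(\Sigma)\cong\Q$ and the product structure of $\Sigma\times\C\to\C$. It even exhibits $P_k$ as the span of the G\"ottsche--Soergel lines with $\ell(\lambda)\le k-n$. What it does not give is the location of $P_k$ in the upward-flow basis $U_\lambda$, which is the paper's actual contribution; there, this rank formula is the input that upgrades $P_k\subseteq K_k$ to equality.
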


While Shen--Zhang's work does determine the perverse pieces, it does not identify the corresponding subspaces in the natural geometric basis coming from the attracting varieties of the \(\C^*\)-action.  The point of the paper is to supply this missing description, in the {top} cohomological degree.


In order to obtain this explicit description, we use De Cataldo--Migliorini's general recipe 
to recover the perverse filtration 
of a map $f: X\fun Y$ from
restrictions to general affine flags in the base $Y.$ 

\begin{theorem}[dCM flag description, \cite{dCM}]
\label[theorem]{intro thm:dcm-flag}
Consider a proper morphism
\[
        f: X\to Y
\]
with \(X\) smooth and \(Y\) affine. Choosing an embedding
$Y\hookrightarrow \mathbb A^N$
and a sufficiently general affine flag
$\emptyset=\Lambda^{-1}\subset \Lambda^0\subset \Lambda^1\subset\cdots\subset \Lambda^N=\mathbb A^N,$
set
\[
        Y_i:=Y\cap \Lambda^i,
        \qquad
        X_i:=f^{-1}(Y_i).
\]
Then, the perverse filtration is computed via restriction maps:
\[
        P_kH^j(X)
        =
        \ker\left(
        H^j(X)\longrightarrow H^j(X_{j-k-1})
        \right).
\]
\end{theorem}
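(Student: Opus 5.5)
The argument follows de Cataldo--Migliorini's original proof: I reduce to the decomposition theorem combined with a Lefschetz-type vanishing statement for sections of perverse sheaves over general affine linear subspaces. Write $Rf_*\Q_X[\dim X]\simeq\bigoplus_b {}^p\mathcal H^b(Rf_*\Q_X[\dim X])[-b]$. This uses the decomposition theorem, since $X$ is smooth and $f$ is proper. The perverse filtration on $H^j(X)$ is then the image of $H^*(Y,{}^p\tau_{\le \bullet}Rf_*\Q_X)$, with the shift conventions normalized so that the indexing matches the one used in \cref{prop:intro-sz-rank}. By base change, $H^j(X_i)=H^j(Y_i,Rf_*\Q_X|_{Y_i})$, so the whole statement becomes one about a complex $K=Rf_*\Q_X$ on the affine variety $Y$ and its restrictions to the general linear sections $Y_i$. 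The plan is to prove that, for a general flag, ${}^p\tau_{\le k}K\to K$ induces on $H^j$ exactly the kernel of restriction to $Y_{j-k-1}$. This is an equality of two subspaces, and I would prove it as two inclusions.

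The key input is the following generalization of Artin vanishing. Let $P$ be a perverse sheaf on an affine $Y$ and $\Lambda$ a general affine subspace of codimension $c$. Then $H^m(Y,Y\cap\Lambda;P)=0$ for $m<c$, equivalently $H^m(Y,P)\to H^m(Y\cap\Lambda,P|)$ is injective in the appropriate low range. Dually, $H^m(Y\cap\Lambda^{i},P|_{Y\cap\Lambda^i}[-\cdot])=0$ for $m>\dim \Lambda^i$. To get this I would use that a general hyperplane section is noncharacteristic, so $P|_{H}[-1]$ is perverse on $Y\cap H$; this is where the genericity of the flag enters, through a stratification adapted to all the ${}^p\mathcal H^b$. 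I would then induct along the flag, using Artin vanishing $H^m(Y,P)=0$ for $m>0$ together with the long exact sequence of the pair $(Y\cap\Lambda^{i+1},Y\cap\Lambda^i)$, where the complement is affine.

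With this lemma in hand, I split $K$ into perverse summands $P_b[-b]$. First, the summands with $b\le k$ have vanishing contribution on $Y_{j-k-1}$ in degree $j$. Indeed, $H^{j-b}$ of the perverse sheaf ${}^p\mathcal H^b|_{Y_{j-k-1}}$, after the correct shift, is zero because $j-b$ exceeds the dimension $j-k-1$ of the section; this is Artin vanishing on the affine $Y_{j-k-1}$. Hence $P_k\subseteq\ker$. Second, for summands with $b>k$, the restriction to $Y_{j-k-1}$ is injective on $H^{j}$, by the injectivity range of the lemma. Since the decomposition splits $H^j(X)$ compatibly with restriction, the kernel is contained in the sum of the summands with $b\le k$. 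Together the two inclusions give the equality.

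I expect the main obstacle to be the uniform genericity. A single flag must be simultaneously transverse, in the sense of noncharacteristic restriction, for all the perverse cohomology sheaves and at every level $i$. The restriction-with-shift then has to remain perverse, and the relative cohomology has to vanish in exactly the stated range. I would handle this by fixing a Whitney stratification for which every ${}^p\mathcal H^b(K)$ is constructible, and then choosing the flag inductively, each $\Lambda^{i}$ generic inside $\Lambda^{i+1}$ relative to the induced stratification. Such a choice is possible by Bertini, being an open dense condition at each step. Getting the index bookkeeping, $j-k-1$ versus the shifts by $\dim X$, consistent with the paper's normalization is the other delicate but routine point.
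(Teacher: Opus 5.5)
\paragraph{A gap in the injectivity step.}
The paper does not prove this statement; it quotes \cite{dCM}, whose argument applies to any constructible complex on $Y$. Your use of the decomposition theorem to treat one perverse summand $P$ at a time is a legitimate simplification here, since $X$ is smooth and $f$ proper. The half $P_k\subseteq\ker$ is fine, up to the indexing point at the end. For general $\Lambda$ of codimension $c$, $P|_{Y\cap\Lambda}[-c]$ is perverse, and Artin vanishing on the affine $Y\cap\Lambda$ kills the relevant group.

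The gap is the injectivity half. You need $H^m(Y,P)\to H^m(Y\cap\Lambda,P)$ to be injective for $m\le -c$; that is the correct range for perverse $P$, not $m<c$. Equivalently, with $u\colon Y\setminus\Lambda\hookrightarrow Y$, you need $H^m(Y,u_!u^*P)=0$ for $m<1-c$.

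Your ingredients only give vanishing \emph{from above}. These are noncharacteristic restriction for a stratification of $Y$, Artin vanishing, and pair sequences with affine complements. Affineness of $Y\setminus\Lambda$ bounds $H^*(Y\setminus\Lambda,-)$ from above and $H^*_c(Y\setminus\Lambda,-)$ from below. But $H^*(Y,u_!u^*P)$ is neither, because $Y$ is not proper.

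These ingredients genuinely cannot suffice. Take $Y=\{xy=1\}\subset\mathbb{A}^2$, $P=\Q_Y[1]$ and $\Lambda=\{x=0\}$. This $\Lambda$ satisfies every hypothesis your argument uses: transversality to any stratification of $Y$ (vacuously), perversity of the shifted restriction, and Artin vanishing on $Y$, on $Y\cap\Lambda$ and on the affine complement. Yet $Y\cap\Lambda=\emptyset$, and $H^{-1}(Y,P)\cong\Q\to 0$ is not injective at $m=-1=-c$. For a general line instead, $H^0(Y,Y\cap\Lambda;P)\cong H^1(\C^*,\text{two points})\cong\Q^2$, which shows that $m<c$ is the wrong range.

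\paragraph{The missing idea and an indexing correction.}
What is missing is genericity \emph{at infinity}. In the example, $\overline{\Lambda}$ passes through the boundary point $[0:1:0]$ of $\overline{Y}=\{xy=z^2\}$. Let $\bar j\colon Y\hookrightarrow\overline{Y}\subset\mathbb{P}^N$ be the closure. Stratify $\overline{Y}$ so that all $R\bar j_*\,{}^p\mathcal H^b$ are constructible, and require each $\overline{\Lambda}^i$ to be transverse to all strata, including those in $\overline{Y}\setminus Y$.

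For a hyperplane $H$, transversality along $\overline{H}\cap(\overline{Y}\setminus Y)$ gives $R\bar j_*u_!u^*P\cong v_!G$. Here $v\colon V:=\overline{Y}\setminus\overline{H}\hookrightarrow\overline{Y}$, and $G$ is the $*$-extension of $P|_{Y\setminus H}$ to $V$. $G$ is perverse because $Y\setminus H\hookrightarrow V$ is an affine open immersion. Hence $H^m(Y,Y\cap H;P)\cong H^m_c(V,G)=0$ for $m<0$, by Artin vanishing with compact supports on the affine $V$. Induction along the flag then gives the codimension-$c$ range. Alternatively, you can cite the affine Lefschetz theorem for perverse sheaves, e.g.\ Beilinson's lemma or the relative vanishing theorem of \cite{dCM}. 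Either way it is an extra input, not a consequence of Artin vanishing on $Y$.

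Finally, the index bookkeeping is not purely routine. The intrinsic index is the codimension $c=k+1-j+d_X-r(f)$ of $\Lambda$. This matches $\dim\Lambda^{j-k-1}=j-k-1$ only when $N=d_X-r(f)$. That holds for $h_n\colon M_n\to\Sym^n\C=\mathbb{A}^n$, but the displayed formula fails for an arbitrary embedding. For instance, take $f=\mathrm{id}$ on a line $Y\subset\mathbb{A}^2$ with $j=0$, $k=-1$: then $P_{-1}H^0=0$, but a general point $\Lambda^0$ misses $Y$, so the kernel is all of $H^0$.
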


We make this description completely explicit for the case of the proper morphism
$$h_n: M_n(\Sigma)\fun \Sym^n \C $$
mentioned above, ultimately computing perverse filtration on the top-degree group
       $H^{2n}(M_n(\Sigma)).$
       
The natural basis of this group comes from the \(\C^*\)-action on $M_n(\Sigma)$ induced from the contracting $\C^*$-action on the total space of the trivial line bundle $\Sigma \times \C$
\[
        t\cdot(x,z)=(x,tz)
\]
Following Nakajima \cite{Nak99}, the fixed components of the induced action on \(M_n(\Sigma)\) are
indexed by partitions \(\lambda=(1^{\alpha_1}2^{\alpha_2} \dots n^{\alpha_n})\):
\[
        F_\lambda=\Sym^\lambda \Sigma:=\prod_{i\ge1} \Sym^{\alpha_i} \Sigma.
\]
Given a generic point \(p\in F_\lambda\), its $\C^*$-upward flow
\[
        W_p^+:=\{q\in M_n(\Sigma) \mid \lim_{t\fun 0} t\cdot q = p \}
\]
is a closed subvariety of $M_n(\Sigma)$. This closedness statement generalizes the argument from the recent work of {\MZ} in the elliptic case $\Sigma=E$. 
In that case, these upward-flow subvarieties have a Lagrangian interpretation and were studied as branes of mirror symmetry \cite{MZ}.
 
Denote $W_\lambda^+:=W_p^+$ for a generic point 
$p\in F_\lambda$. It defines Borel--Moore homology classes
\[
        [W_\lambda^+]\in H^{\BM}_{2n}(M_n(\Sigma)
        ),
\]
and hence, by Poincaré duality, cohomology classes
\[
        U_\lambda:=\PD[W_\lambda^+]\in H^{2n}(M_n(\Sigma)).
\]
The classes \(U_\lambda\), for \(\lambda\vdash n\), form a natural geometric
basis of \(H^{2n}(M_n(\Sigma))\).

The geometric mechanism is to turn the abstract restriction maps in \cref{intro thm:dcm-flag} into concrete incidence equations against the collision strata of \(\Sym^n\C\).  Let
\[
        S_\mu\subset \Sym^n\C
\]
be the collision stratum corresponding to a partition \(\mu\vdash n\).
As observed in \cite{MZ}
(for $\Sigma=E$), the restriction  
$$h_n|_{W_\lambda^+}: W_\lambda^+ \fun \Sym^n \C $$
is étale-locally modeled by the symmetrization map
\[
        \prod_i\Sym^{\lambda_i}\C\longrightarrow \Sym^n\C,
        \qquad
        (D_1,\ldots,D_{\ell(\lambda)})\longmapsto D_1+\cdots+D_{\ell(\lambda)},
\]
which is, in particular, a branched cover. 
Its branch count can be written in the symmetric function language. Denoting by $m_\mu$ the 
monomial symmetric function and 
$h_\lambda:=h_{\lambda_1}\cdot \dots\cdot h_{\lambda_{\ell(\lambda)}}$
the product of complete homogeneous symmetric functions, 
we get:

\begin{proposition}
    The number of branches of
\[
       h_n|_{W_\lambda^+}: W_\lambda^+\to \Sym^n\C
\]
over any point of \(S_\mu\) is
precisely $n_\lambda^\mu,$
where $h_\lambda=\sum_{\mu\vdash n} n_\lambda^\mu m_\mu$.
\end{proposition}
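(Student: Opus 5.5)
The plan is to reduce the statement, via the étale-local model of $h_n|_{W_\lambda^+}$ recalled just above, to a purely combinatorial count for the symmetrization map
\[
        \sigma_\lambda:\prod_{i=1}^{\ell(\lambda)}\Sym^{\lambda_i}\C\longrightarrow\Sym^n\C,
        \qquad
        (D_1,\ldots,D_{\ell(\lambda)})\longmapsto D_1+\cdots+D_{\ell(\lambda)},
\]
and then to identify that count with the coefficient of $m_\mu$ in $h_\lambda$. First I will make precise what ``number of branches over a point of $S_\mu$'' means. Since $\sigma_\lambda$ is finite, I take it to be the cardinality of the set-theoretic fibre $\sigma_\lambda^{-1}(D)$ for $D\in S_\mu$. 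Because the local model identifies $h_n|_{W_\lambda^+}$ with $\sigma_\lambda$ étale-locally over the base, fibre cardinalities of the two maps agree. I will check that this identification is compatible with the stratification, i.e.\ that the étale-local isomorphism is over $\Sym^n\C$ or at least preserves collision type. Following \cite{MZ}, this should come from the fact that $W_p^+$, for generic $p=(\text{points }x_1,\dots,x_\ell\in\Sigma)$, consists of subschemes supported on $\{x_j\}\times\C$ with the curvilinear structure dictated by $\lambda$. Each such subscheme is determined by divisors $D_j\in\Sym^{\lambda_j}\C$ on the fibre lines. Because the $x_j$ are distinct, $W_\lambda^+\cong\prod_j\Sym^{\lambda_j}\C$ globally, compatibly with $h_n$.

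Second, I compute the fibre. Fix $D=\sum_{k=1}^{r}\mu_k z_k\in S_\mu$ with the $z_k$ distinct. A point of $\sigma_\lambda^{-1}(D)$ is a tuple $(D_1,\dots,D_\ell)$ of effective divisors with $\deg D_j=\lambda_j$ and $\sum_j D_j=D$. Writing $D_j=\sum_k a_{jk}z_k$, such tuples correspond bijectively to nonnegative integer $\ell\times r$ matrices $(a_{jk})$ with row sums $\lambda_j$ and column sums $\mu_k$. This number is independent of the choice of point in $S_\mu$.

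Third, I identify that count with $n_\lambda^\mu$ using the standard expansion
\[
        h_\lambda=\prod_j h_{\lambda_j}=\prod_j\sum_{|\beta|=\lambda_j}x^\beta .
\]
The coefficient of $x_1^{\mu_1}\cdots x_r^{\mu_r}$ counts choices of exponent vectors $\beta^{(j)}$, one for each factor, with $|\beta^{(j)}|=\lambda_j$ and $\sum_j\beta^{(j)}=\mu$. These are exactly the matrices above. Since $h_\lambda$ is symmetric, this coefficient equals the coefficient of $m_\mu$, which proves the claim.

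The main obstacle is the first step: justifying that the branch count of $h_n|_{W_\lambda^+}$ equals the fibre cardinality of $\sigma_\lambda$ for general $\Sigma$, not only for $\Sigma=E$. Concretely, I would show that a point of $W_p^+$ is a disjoint union of subschemes of length $\lambda_j$ near $x_j$, each lying in a curvilinear family over $\Sym^{\lambda_j}\C$ via its projection to $\C$. I would try to do this by analysing the $\C^*$-limit in the local coordinates $\Sigma\times\C\supset U_{x_j}\times\C\cong\C^2$, where the upward flow of the fixed point $(x_j)\times(z^{\lambda_j})$-type ideal is Nakajima's explicit attracting cell. If the étale model turns out to be only a homeomorphism onto its image rather than an isomorphism, counting set-theoretic preimages is still enough.
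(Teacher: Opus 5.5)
Your proposal is correct and follows essentially the same route as the paper. The identification $W_p^+\cong\prod_j\Sym^{\lambda_j}\C$, under which $h_n$ becomes the symmetrization map, together with the indexing of the fibre over $y\in S_\mu$ by nonnegative integer matrices with row sums $\lambda_j$ and column sums $\mu_k$, is \cref{lem:vertical-sheets}. The identification of this count with $[m_\mu]h_\lambda$ is \cref{lem:matrix-count}, and the step you flag as the main obstacle is handled there by the same local argument: the attracting set of the vertical punctual fixed point at $(x,0)$ is $\Sym^i(\{x\}\times\C)$, and disjointness of the vertical lines makes the product decomposition global.
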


Let us explain how this branch count enters the perverse filtration. If
\[
        \Lambda_0\in S_{1^n}\subset \Sym^n\C
\]
is a general point, equivalently, a 0-dimensional affine subspace and
\[
        i_{\Lambda_0}:h_n^{-1}(\Lambda_0)\hookrightarrow M_n(\Sigma)
\]
is the inclusion, then for
\[
        \alpha:=\sum_{\lambda\vdash n}c_\lambda U_\lambda
\]
the restriction \(i_{\Lambda_0}^*\alpha\) is computed by intersecting the
Borel--Moore representatives \(W_\lambda^+\) with the fiber 
\(h_n^{-1}(\Lambda_0)\).
By the branch-count computation above, the intersection consists of \(n_\lambda^{1^n}\) local sheets.  Hence we get 
\[
        \alpha \in P_{2n-1} H^{2n}(M_n(\Sigma))  \overset{\cref{intro thm:dcm-flag}}{\Longleftrightarrow} i_{\Lambda_0}^*\alpha=0
        \quad\Longleftrightarrow\quad
        \sum_{\lambda\vdash n}c_\lambda n_\lambda^{1^n}=0.
\]

This generalizes to the higher-dimensional affine subspaces appearing in the dCM
criterion.  If \(\Lambda_i\subset \Sym^n\C\) is a sufficiently general affine linear
subspace, 
choosing a point in a certain stratum
\[
        y\in \Lambda_i\cap S_\mu,
\]
we pass to the generally singular, yet pure-dimensional fiber
\[
        i_y: h_n^{-1}(y)\hookrightarrow M_n(\Sigma)
\]
and compute its
intersection number with the Borel--Moore representative of $\alpha$:
\[
      \la i_y^*\alpha , [h_n^{-1}(y)_{\red}]\ra =  \sum_{\lambda\vdash n}c_\lambda n_\lambda^\mu.
\]
Here, pure-dimensionality ensures that the Borel--Moore class of the reduced scheme $[h_n^{-1}(y)_{\red}]$ is well-defined. 
Therefore, the kernel of the dCM restriction map is cut out by the vanishing
of precisely these fiber-intersection multiplicities on the collision strata
met by the chosen affine subspace $\Lambda_i$. 
Since a sufficiently general affine subspace of dimension
\(n-r-1\) meets exactly the strata \(S_\mu\) with \(\ell(\mu)>r\), the dCM
flag description gives the following incidence form of the perverse
filtration.

\begin{prop}
\label[proposition]{prop:intro-incidence}
Let
\[
        \alpha=\sum_{\lambda\vdash n}c_\lambda U_\lambda
        \in H^{2n}(M_n(\Sigma)).
\]
Then
\[
        \alpha\in P_kH^{2n}(M_n(\Sigma)) \iff
        \sum_{\lambda\vdash n}c_\lambda n_\lambda^\mu=0,\  \forall \mu\vdash n \text{ satisfying }\ell(\mu)>k-n.
\]
\end{prop}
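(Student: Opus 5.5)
We apply \cref{intro thm:dcm-flag} to $f=h_n$, $Y=\Sym^n\C\cong\A^n$ (via elementary symmetric functions, so $N=n$) and $j=2n$. Then $P_kH^{2n}(M_n(\Sigma))=\ker\bigl(H^{2n}(M_n(\Sigma))\to H^{2n}(X_{2n-k-1})\bigr)$, where $X_i=h_n^{-1}(\Lambda^i)$ and $\Lambda^i$ is a general affine subspace of dimension $i=2n-k-1=n-r-1$ with $r:=k-n$. For $k\ge 2n$ the condition is empty, and indeed no $\mu$ has $\ell(\mu)>n$. So we may assume $0\le r\le n-1$.

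\textbf{Step 1 (geometry of $X_i$).} Since $\Lambda^i$ is general of dimension $n-r-1$, it meets $S_\mu$, which has dimension $\ell(\mu)$, exactly when $\ell(\mu)\ge r+1$, and it does so transversally in a stratum of dimension $\ell(\mu)-r-1$. The fibres of $h_n$ over $S_\mu$ are pure of dimension $n-\ell(\mu)$: they are products of punctual-type Hilbert schemes $\Hilb^{\mu_j}(\Sigma\times\{0\})$-neighbourhoods, and the fibre over a point of $S_\mu$ is pure of dimension $\sum_j(\mu_j-1)$. Hence $X_i$ has real dimension $2(n-r-1)+\dots$; more precisely, $\dim_\C h_n^{-1}(\Lambda^i\cap S_\mu)=n-r-1$. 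In particular $X_i$ is a proper-over-affine variety of pure complex dimension $n-r-1$, and the top pieces of its Borel--Moore homology in the relevant degree are spanned by the components $Z_{\mu,c}$. Here $c$ runs over the irreducible components of the fibre over $S_\mu\cap\Lambda^i$, which are the closures of $h_n^{-1}(S_\mu\cap\Lambda^i)$ swept by fibre components.

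\textbf{Step 2 (detecting vanishing).} I would show that a class $\beta\in H^{2n}(X_i)$ restricted from $M_n(\Sigma)$ vanishes if and only if its pairings with the fibre classes $[h_n^{-1}(y)_\red]$ vanish for $y\in\Lambda^i\cap S_\mu$, $\ell(\mu)>r$. The route is to factor through $H^{2n}(h_n^{-1}(y))$. By Step 1 the degree $2n$ lies in a range where $H^{2n}(X_i)$ injects into $\bigoplus_\mu H^{2n}$ of fibres over points of the strata. I would argue this stratum by stratum with the long exact sequences for open/closed decompositions of $X_i$ along the stratification. Over a positive-dimensional stratum the relevant local systems must be shown to contribute nothing beyond fibre classes; here the pullback from $M_n$ is constant along the stratum, and its restriction to a fibre is determined by pairing with the top-dimensional fibre components. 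For restricted classes I expect the pairing with the whole reduced fibre to suffice. The reason is that monodromy along $S_\mu$ and the symmetric structure of the components (products of $\Sym$-type pieces of $\Sigma$ with punctual Hilbert schemes, whose top cohomology is one-dimensional per $\mu_j$ factor) make the restricted class determined by a single number. This is the step I expect to be the main obstacle. I would first try to establish, via \cite{ShenZhang}'s decomposition-theorem description of $Rh_{n*}\Q$ as a sum over $\mu$ of pushforwards of local systems from $\overline{S_\mu}$, that the degree-$2n$ part of $H^{2n}(X_i)$ is exactly $\bigoplus_{\ell(\mu)>r}\Q$. The check is that the summand supported on $\overline{S_\mu}$ contributes its top-degree piece only through the generic fibre class.

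\textbf{Step 3 (computing the pairing).} For $y\in S_\mu$ general, intersect $W_\lambda^+$ with $h_n^{-1}(y)$. By the étale-local model of $h_n|_{W_\lambda^+}$ as the symmetrization $\prod_i\Sym^{\lambda_i}\C\to\Sym^n\C$, the intersection is transverse branch by branch, with one point per preimage. The number of preimages of a point of type $\mu$ is the number of ways to distribute the multiset $\mu$ into the parts $\lambda_i$, which is the coefficient $n_\lambda^\mu$ by the branch-count Proposition. Hence $\la i_y^*\alpha,[h_n^{-1}(y)_\red]\ra=\sum_\lambda c_\lambda n_\lambda^\mu$. To justify transversality, I would use that $W^+_\lambda$ is Lagrangian-like and transverse to the fibre components at these points, or alternatively deform $y$ within $S_\mu$ and use properness. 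Combining Steps 2 and 3 over all $\mu$ with $\ell(\mu)>k-n$ gives the stated equivalence.
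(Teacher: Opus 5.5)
\textbf{The converse direction is not proved.} Your Step 3, combined with the trivial half of Step 2, gives the forward implication exactly as the paper does. The trivial half is: if $i_{\Lambda}^*\alpha=0$ then $i_y^*\alpha=0$, so every fibre pairing vanishes. One restricts to $h_n^{-1}(\Lambda^{2n-k-1})$, then to a fibre $M_y$ with $y\in\Lambda^{2n-k-1}\cap S_\mu$, and pairs with $[(M_y)_{\red}]$. (For transversality, note there is no symplectic structure for general $\Sigma$. The paper instead uses that $W_p^+$ is tangent to the vertical directions and the relevant fibre component to the horizontal ones.)

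The converse is where the gap lies. Step 2 is only announced, and the mechanism you propose for it is false. The restriction of a class from $M_n$ to a single fibre $M_y$ is \emph{not} determined by its pairing with the total class $[(M_y)_{\red}]$. Take $n=3$ and $y=2z_1+z_2\in S_{21}$. Then $M_y\cong F_2\times\Sigma$ has two components, $C_{(2)}\times\Sigma$ and $C_{(11)}\times\Sigma$. Counting the intersection points of \cref{lem:reduced-fiber-pairing} component by component, $U_3,U_{21},U_{111}$ pair with them as $(1,0)$, $(1,1)$, $(0,3)$. These span $H^6(M_y)\cong\Q^2$. For example, $3U_3-U_{111}$ has zero pairing with $[(M_y)_{\red}]$ but restricts nontrivially to $M_y$.

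Two of your supporting claims fail here. The core $F_m$ has one component for each partition of $m$, so its top cohomology is not one-dimensional. Monodromy along $S_\mu$ only permutes factors with equal $\mu_j$, so it cannot collapse this. The global statement you want in Step 2 is true, but only as a consequence of the proposition. Proving it directly from the decomposition theorem would need an analysis you have not supplied.

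Step 1 also has the dimensions wrong. By \cref{lem:fibers-equidimensional}, every fibre of $h_n$ is pure of dimension $n$; the generic fibre is $\Sigma^n$. So $X_i$ is pure of dimension $n+i=2n-r-1$, not $n-r-1$. With your count, $H^{2n}(X_i)$ would vanish for degree reasons, and the degree-$2n$ pairing with $[(M_y)_{\red}]$ in Step 3 would not make sense.

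\textbf{The missing idea is that no injectivity statement is needed.} Let $K_k$ be the space of $\alpha=\sum_\lambda c_\lambda U_\lambda$ satisfying the incidence equations. The forward direction gives $P_kH^{2n}(M_n)\subseteq K_k$.
\begin{itemize}
\item Under $\psi\colon h_\lambda\mapsto U_\lambda$, the quantity $\sum_\lambda c_\lambda n_\lambda^\mu$ is the coefficient of $m_\mu$ in $\sum_\lambda c_\lambda h_\lambda$. Hence $K_k=\psi\bigl(\Span\{m_\mu\mid \ell(\mu)\le k-n\}\bigr)$.
\item Since both $\{h_\lambda\}$ and $\{m_\mu\}$ are bases of $\Lambda^n$, this gives $\dim K_k=\#\{\lambda\vdash n\mid\ell(\lambda)\le k-n\}$.
\item The Shen--Zhang rank formula says $\dim P_kH^{2n}(M_n)$ is the same number, so the inclusion is an equality.
\end{itemize}
This is how the paper obtains the reverse implication. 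Its only external input is the numerical rank formula, not any description of the summands of $Rh_{n*}\Q$ or of the cohomology of $X_i$.
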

In order to find a nice perverse-homogeneous basis, the problem  
becomes diagonalizing the collision-incidence matrix
\(N=(n_\lambda^\mu)\).
The appearance of products of Newton power sums 
$$p_\lambda:=p_{\lambda_1} \cdots  p_{\lambda_{\ell(\lambda)}}$$
is not an arbitrary change of basis: the
power-sum basis is precisely the symmetric-function basis adapted to these
incidence conditions, since the monomial expansion of \(p_\lambda\) has no
terms supported on more than \(\ell(\lambda)\) distinct variables.  Thus the
length filtration on partitions is diagonalized by the power sums.  This is
classical symmetric-function theory.  Let
\(\Lambda^n\) denote the degree-\(n\) part of the ring of symmetric
functions.  Since the \(h_\mu\)'s form a basis of \(\Lambda^n\),
there is a unique linear isomorphism
\[
        \psi:\Lambda^n\xrightarrow{\;\iso\;}H^{2n}(M_n(\Sigma)),
        \qquad h_\mu\longmapsto U_\mu.
\]
We define
\[
        \eta_\lambda:=\psi(p_\lambda)
        =\sum_{\mu\vdash n}a_{\lambda\mu}U_\mu,
\]
where 
$$p_\lambda=\sum_\mu a_{\lambda\mu}h_\mu$$ is the expansion of Newton power sums in the complete-homogeneous basis.  The coefficients
\(a_{\lambda\mu}\in \Z\) are \textit{upper-triangular} with respect to length:
\begin{equation}\label{intro: eq uppertriangularity}
        a_{\lambda\mu}=0
        \qquad\text{unless}\qquad
        \mu=\lambda
        \text{ or }
        \ell(\mu)>\ell(\lambda),
\end{equation}
Moreover,
\[
        a_{\lambda\lambda}=\prod_i\lambda_i.
\]

Finally, we get to the main theorem of this paper.  

\begin{theorem}
\label[theorem]{thm:intro-main}
For every \(k\),
\[
        P_kH^{2n}(M_n(\Sigma))
        =
        \Span\{\eta_\lambda\mid \ell(\lambda)\le k-n\}.
\]
\end{theorem}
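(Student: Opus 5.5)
The plan is to combine \cref{prop:intro-incidence} with the symmetric-function dictionary $\psi$. Pull everything back to $\Lambda^n$ and consider the linear functional $\phi_\mu:\Lambda^n\to\Q$ that sends $f$ to the coefficient of $m_\mu$ in the monomial expansion of $f$. Since $h_\lambda=\sum_\mu n_\lambda^\mu m_\mu$, for $\alpha=\sum_\lambda c_\lambda U_\lambda=\psi\bigl(\sum_\lambda c_\lambda h_\lambda\bigr)$ we get
\[
\sum_\lambda c_\lambda n_\lambda^\mu=\phi_\mu\Bigl(\sum_\lambda c_\lambda h_\lambda\Bigr).
\]
So \cref{prop:intro-incidence} says exactly that
\[
\psi^{-1}\bigl(P_kH^{2n}(M_n(\Sigma))\bigr)=\{f\in\Lambda^n \mid \phi_\mu(f)=0 \text{ for all } \mu \text{ with } \ell(\mu)>k-n\}.
\]
This is the span of the $m_\mu$ with $\ell(\mu)\le k-n$, because the $m_\mu$ form a basis dual to the functionals $\phi_\mu$. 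Set $r=k-n$. It then suffices to show that, inside $\Lambda^n$,
\[
\Span\{m_\mu\mid \ell(\mu)\le r\}=\Span\{p_\lambda\mid \ell(\lambda)\le r\}.
\]
Applying $\psi$ to this identity gives the theorem for $n\le k\le 2n$.

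For the inclusion $\supseteq$, expand $p_\lambda=\prod_i(\sum_j x_j^{\lambda_i})$. Each monomial in this product involves at most $\ell(\lambda)$ distinct variables, so its exponent partition has length at most $\ell(\lambda)\le r$. Hence $p_\lambda$ is a combination of $m_\mu$ with $\ell(\mu)\le r$. For equality, I would not rely on a dimension count from \cref{prop:intro-sz-rank}. Instead I would prove it directly. Both sides are spanned by vectors indexed by the set $\{\lambda\vdash n \mid \ell(\lambda)\le r\}$, and the $p_\lambda$ are linearly independent over $\Q$. The dimensions therefore agree, and the inclusion is an equality.

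For cleanliness one can also make the argument triangular. In the expansion $p_\lambda=\sum_\mu R_{\lambda\mu}m_\mu$, the coefficient $R_{\lambda\mu}$ vanishes unless $\mu$ is a coarsening of $\lambda$ (obtained by merging parts). In particular $R_{\lambda\mu}=0$ unless $\mu=\lambda$ or $\ell(\mu)<\ell(\lambda)$. Moreover $R_{\lambda\lambda}=\prod_j\alpha_j(\lambda)!$ is nonzero, where $\alpha_j(\lambda)$ is the number of parts of $\lambda$ equal to $j$. This triangularity exhibits the equality of spans explicitly, filtration level by filtration level. The main point needing care is the dictionary step: I must check that the incidence condition in \cref{prop:intro-incidence} is exactly the vanishing of the $m_\mu$-coefficients. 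This is where the definition $h_\lambda=\sum n_\lambda^\mu m_\mu$ enters.

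Finally, the boundary values of $k$ need attention. For $k\ge 2n$ every partition has $\ell(\lambda)\le n\le k-n$, and the condition set is empty, so both sides are all of $H^{2n}$. For $k<n$ both sides vanish, because every $\mu$ satisfies $\ell(\mu)\ge 1>k-n$ and so all the $m_\mu$-coefficients are forced to vanish. The upper-triangularity \eqref{intro: eq uppertriangularity} is not needed for the statement itself. It only gives the explicit triangular form of $\eta_\lambda$ in the $U_\mu$ basis.
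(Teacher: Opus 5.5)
Your symmetric-function step is correct and matches the paper's. Under $\psi$ the incidence conditions become the vanishing of the $m_\mu$-coefficients with $\ell(\mu)>k-n$. The equality $\Span\{m_\mu\mid\ell(\mu)\le r\}=\Span\{p_\lambda\mid\ell(\lambda)\le r\}$ then follows from the support-length property and the linear independence of the $p_\lambda$.

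The gap is that you use \cref{prop:intro-incidence} as an \emph{equivalence}. The geometry only establishes the forward implication. That argument is: dCM restriction to $M_L=h_n^{-1}(L)$ with $L=\Lambda^{2n-k-1}$, then restriction to fibers $M_y$ for $y\in L\cap S_\mu$, then pairing with $[(M_y)_{\red}]$ via \cref{lem:reduced-fiber-pairing}. It gives $P_kH^{2n}(M_n)\subseteq K_k$, where $K_k$ is the subspace cut out by the equations, and this is all that \cref{prop:support-dcm} proves.

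Since $i_L^*\alpha=0$ iff $\alpha$ annihilates $i_{L*}H_{2n}(M_L)\subseteq H_{2n}(M_n)$, the converse amounts to showing that this image is spanned by the classes $i_{y*}[(M_y)_{\red}]$. Nothing guarantees that:
\begin{itemize}
\item Already $i_y^*\alpha=0$ means vanishing on every irreducible component of $M_y$ separately, not just on their sum.
\item When $\dim L\ge1$, the degree $2n$ lies below the top degree of $M_L$, which has complex dimension $3n-k-1$. So there is no a priori reason for $i_{L*}H_{2n}(M_L)$ to be spanned by fiber classes at all.
\end{itemize}
In the paper, the ``$\Leftarrow$'' half of \cref{prop:intro-incidence} is a consequence of \cref{thm:main}, not an input to it, so relying on it here is circular.

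The missing ingredient is exactly the one you set aside. The dimension count is indeed unnecessary for the equality of spans inside $\Lambda^n$, but it is what closes the geometric inclusion. The paper invokes the Shen--Zhang rank formula (\cref{prop:intro-sz-rank}), $\dim P_kH^{2n}(M_n)=\#\{\lambda\vdash n\mid\ell(\lambda)\le k-n\}=\dim K_k$, to upgrade $P_kH^{2n}(M_n)\subseteq K_k$ to an equality.

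Without it, your argument only covers the edge cases:
\begin{itemize}
\item $k\le n$, where $K_k=0$;
\item $k\ge 2n$, where $L=\emptyset$;
\item $k=2n-1$, where $L$ is a single point $y\in S_{1^n}$, $M_y\cong\Sigma^n$, and $H^{2n}(M_y)\cong\Q$ is detected by the fundamental class.
\end{itemize}
For $n<k<2n-1$, a range that is nonempty once $n\ge3$, you need either the rank formula or a new argument that the reduced fiber classes detect the kernel of $H^{2n}(M_n)\to H^{2n}(M_L)$.
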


Our contribution is twofold.  First, a \textit{method}: we convert the de
Cataldo--Migliorini flag description into an explicit incidence calculation
against the collision strata of \(\Sym^n\C\) in the upward-flow basis, and we
solve the resulting equations by a single, classical change of basis between
the complete homogeneous symmetric functions and the Newton power sums.  This
produces the dictionary
\[
        \psi:\ h_\lambda\longmapsto U_\lambda,
        \qquad p_\lambda\longmapsto \eta_\lambda,
\]
under which the perverse filtration corresponds to the filtration of
\(\Lambda^n\) by partition length.  

Second, an \textit{upward-flow presentation}: while Shen--Zhang describe the perverse filtration in terms of tautological classes, we
identify its steps as concrete spans of upward-flow cycles, 
for every step of the filtration and for an arbitrary connected smooth projective curve $\Sigma$.

An interesting consequence of \cref{thm:intro-main} and \eqref{intro: eq uppertriangularity} is a clean triangularity between the geometric and perverse-homogeneous bases:

\begin{corollary}
The perverse-homogeneous basis \(\{\eta_\lambda\}_{\lambda\vdash n}\) of
\(H^{2n}(M_n(\Sigma))\) has an explicit upper-triangular transition matrix
from the upward-flow basis \(\{U_\lambda\}_{\lambda\vdash n}\), with respect to
any total order on partitions refining the length order.
\end{corollary}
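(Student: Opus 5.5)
The corollary follows by combining \cref{thm:intro-main} with the triangularity \eqref{intro: eq uppertriangularity}; no further geometry is needed.

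First, we check that $\{\eta_\lambda\}_{\lambda\vdash n}$ is a basis of $H^{2n}(M_n(\Sigma))$ that is homogeneous for the perverse filtration. The $p_\lambda$, $\lambda\vdash n$, form a $\Q$-basis of $\Lambda^n_\Q$, and $\psi$ is a linear isomorphism, because the $U_\mu$ form a basis and the $h_\mu$ form a basis. Hence the $\eta_\lambda=\psi(p_\lambda)$ form a basis of $H^{2n}(M_n(\Sigma))$. By \cref{thm:intro-main}, each step $P_kH^{2n}$ is spanned by a subset of this basis, namely $\{\eta_\lambda : \ell(\lambda)\le k-n\}$. So $\eta_\lambda$ lies in $P_{n+\ell(\lambda)}$. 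It does not lie in $P_{n+\ell(\lambda)-1}$, since that step is spanned by the other basis vectors, those with $\ell<\ell(\lambda)$. This justifies the name ``perverse-homogeneous'': $\eta_\lambda$ has perversity exactly $n+\ell(\lambda)$. As a consistency check, counting basis vectors recovers \cref{prop:intro-sz-rank}.

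Second, we verify triangularity. Fix any total order $\prec$ on partitions of $n$ refining the length order, meaning $\ell(\mu)<\ell(\nu)\Rightarrow \mu\prec\nu$. By definition,
\[
\eta_\lambda=\sum_{\mu}a_{\lambda\mu}U_\mu .
\]
By \eqref{intro: eq uppertriangularity}, $a_{\lambda\mu}\neq 0$ forces either $\mu=\lambda$ or $\ell(\mu)>\ell(\lambda)$, and in the latter case $\lambda\prec\mu$. So the matrix $(a_{\lambda\mu})$, with rows and columns ordered by $\prec$, is upper triangular. Its diagonal entries are $a_{\lambda\lambda}=\prod_i\lambda_i\neq0$, so it is invertible. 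Its inverse, expressing $U_\mu$ in terms of the $\eta_\lambda$, is therefore also upper triangular with rational entries.

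The only substantive input is \eqref{intro: eq uppertriangularity} together with the diagonal value. If we had to justify these ourselves, we would argue as follows. Expanding $p_\lambda$ in monomials gives
\[
p_\lambda=\prod_i\lambda_i\cdot m_\lambda\cdot(\text{stuff})+\dots ,
\]
which is supported on monomials $m_\nu$ with $\ell(\nu)\le\ell(\lambda)$. Expanding $h_\mu$ gives $m_{1^n}$-type terms, which is the wrong direction for a direct argument. Instead, we would use duality under the Hall inner product: $\langle h_\mu,m_\nu\rangle=\delta_{\mu\nu}$. Writing $p_\lambda=\sum_\mu a_{\lambda\mu}h_\mu$, we get
\[
a_{\lambda\mu}=\langle p_\lambda,m_\mu\rangle .
\]
Then we write $m_\mu$ in the power-sum basis; the transition is triangular in the refinement order. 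This is the step we expect to need the most care, in particular to get the coefficient $\prod\lambda_i$ from $z_\lambda$ and the relevant orbit counts. Since the paper states \eqref{intro: eq uppertriangularity} earlier, we simply invoke it.
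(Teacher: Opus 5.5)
Your proposal is correct and follows the paper's argument: the corollary is \cref{thm:intro-main}, which makes $\eta_\lambda$ perverse-homogeneous of perversity $n+\ell(\lambda)$, combined with the length-triangularity and diagonal value of $a_{\lambda\mu}$ from \cref{prop:a-triangular} (the paper proves that proposition via the logarithmic Newton identity rather than your Hall-inner-product sketch). One small slip in your optional aside: the coefficient of $m_\lambda$ in $p_\lambda$ is $\prod_i\alpha_i(\lambda)!$ and does not involve $\prod_i\lambda_i$; the diagonal value is then $a_{\lambda\lambda}=z_\lambda/\prod_i\alpha_i(\lambda)!=\prod_i\lambda_i$, as you anticipated, and none of this affects your main argument.
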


The triangular shape of this matrix is consistent with the \textit{generic} linear-algebra picture: a generic ordered basis determines a complete flag in opposite position to a fixed filtration, so an adapted basis for the filtration has an anti-triangular echelon form, up to ordering conventions. Here, the filtration and the ordered basis are both \textit{fixed} by the geometry, so this generic relative-position pattern is not automatic.

Whether explicit triangular perverse-to-upward-flow transition matrices can be found for other Higgs moduli spaces seems a worthwhile avenue for further research.

\medskip
\noindent\textbf{Acknowledgments.}
I thank Mark de Cataldo for helpful conversations.
I am especially grateful to Alexandre Minets for explaining the details of Shen--Zhang's work and for our ongoing collaboration, which gave rise to the questions studied in this paper.
I acknowledge the hospitality and support of the Simons Center for Geometry and Physics.

\section{Algebra of symmetric functions}

Let
\[
\Lambda:=\varprojlim_{n\fun \infty }\Z[x_1,x_2,x_3,\ldots,x_n]^{S_n}
\]
be the ring of symmetric functions over $\Z$, given as the inverse limit of the rings of symmetric polynomials in
finitely many variables. 
We write \(\Lambda^n\) for its degree \(n\)
homogeneous part. We denote the same ring over $\Q$ by
$$\Lambda_\Q:=\Lambda\otimes \Q$$
and by $\Lambda_\Q^n$ its degree \(n\)
homogeneous part.
For background on these rings and the standard statements about them used here, we
refer to \cite[Ch.~I]{Macdonald}.

We recall some notation for partitions.  A \textit{partition} of \(n\) will be written
\[
        \lambda=(\lambda_1,\ldots,\lambda_r)\vdash n,
        \qquad
        \lambda_1\ge \cdots\ge \lambda_r>0,
        \qquad
        \sum_i\lambda_i=n.
\]
Its \textit{length} is
\[
        \ell(\lambda)=r.
\]
We also use the notation
\[
        \lambda=(1^{\alpha_1}2^{\alpha_2}\cdots),
\]
where \(\a_j=\a_j(\lambda)\) is the multiplicity of the part \(j\) in
\(\lambda\).

Given a
partition \(\mu=(\mu_1,\ldots,\mu_s) \vdash n\), recall the \textit{monomial symmetric function} \(m_\mu\)
\[
        m_\mu
        =
        \sum_{\substack{1\le i_1<\cdots<i_s\\
        \sigma\in S_s/\operatorname{Stab}(\mu)}}
        x_{i_1}^{\mu_{\sigma(1)}}\cdots x_{i_s}^{\mu_{\sigma(s)}} \in \Lambda^n,
\]
where \(\operatorname{Stab}(\mu)\subset S_s\) is the subgroup preserving
\((\mu_1,\ldots,\mu_s)\). Equivalently, \(m_\mu\) is the sum of all
distinct monomials obtained by assigning the parts of \(\mu\) to distinct
variables.  For example,
\[
        m_{21}=\sum_{i\ne j}x_i^2x_j,
        \quad
        m_{111}=\sum_{i<j<k}x_ix_jx_k.
\]
It is a standard statement that 
\begin{prop}
    The set $\{ m_\mu \mid \mu \vdash n\}$ forms a $\Z$-module basis of $\Lambda^n$.
\end{prop}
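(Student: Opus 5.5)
The plan has two parts: first show that the $m_\mu$ span $\Lambda^n$ over $\Z$, then show that they are linearly independent.

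For spanning, I would work first in a fixed finite polynomial ring $\Z[x_1,\ldots,x_N]^{S_N}$ with $N\ge n$. Let $f$ be a homogeneous symmetric polynomial of degree $n$, written as $f=\sum_\beta c_\beta x^\beta$ over exponent vectors $\beta\in\Z_{\ge0}^N$ with $|\beta|=n$. Symmetry means $c_{\sigma\beta}=c_\beta$ for all $\sigma\in S_N$, so the coefficient function is constant on $S_N$-orbits of exponent vectors. Each orbit contains exactly one weakly decreasing vector. After deleting zeros, that vector is a partition $\mu\vdash n$ with $\ell(\mu)\le N$. By the description of $m_\mu$ as the sum of all distinct monomials obtained by placing the parts of $\mu$ on distinct variables, the orbit sum is exactly $m_\mu(x_1,\ldots,x_N)$. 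Hence
\[
f=\sum_{\mu\vdash n,\ \ell(\mu)\le N} c_\mu\, m_\mu(x_1,\ldots,x_N),
\]
and the coefficients $c_\mu$ are integers because they are coefficients of $f$.

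To pass to $\Lambda^n$, I would use that the restriction maps $\Z[x_1,\ldots,x_{N+1}]^{S_{N+1}}\to\Z[x_1,\ldots,x_N]^{S_N}$ (setting $x_{N+1}=0$) send $m_\mu$ to $m_\mu$ when $\ell(\mu)\le N$, and to $0$ otherwise. Once $N\ge n$, every $\mu\vdash n$ satisfies $\ell(\mu)\le n\le N$. So in degree $n$ these maps are isomorphisms carrying the $m_\mu$ to the $m_\mu$. Consequently the inverse limit in degree $n$ is identified with the degree-$n$ part at any level $N\ge n$, and the expansion above holds in $\Lambda^n$.

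For linear independence, suppose $\sum_\mu c_\mu m_\mu=0$. For distinct partitions $\mu$, the monomials $x_1^{\mu_1}\cdots x_s^{\mu_s}$ are distinct, and each one appears only in $m_\mu$. Comparing coefficients of these monomials (in $N\ge n$ variables) forces every $c_\mu=0$. Therefore $\{m_\mu\}_{\mu\vdash n}$ is a $\Z$-basis of $\Lambda^n$.

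The only step that needs any care is the stabilization of the inverse limit in fixed degree; the rest is bookkeeping on orbits of exponent vectors. I would note that this is standard, as in \cite[Ch.~I]{Macdonald}.
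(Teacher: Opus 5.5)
Your proof is correct and is the standard argument from Macdonald, Ch.~I, \S2 (orbit sums of exponent vectors, stabilization of the degree-$n$ part for $N\ge n$, independence via the monomial $x_1^{\mu_1}\cdots x_s^{\mu_s}$ occurring only in $m_\mu$); the paper gives no proof of its own and simply cites this. One ordering point to fix: the claim that the restriction maps are isomorphisms in degree $n$ needs linear independence at level $N\ge n$ for injectivity, so prove independence before asserting the isomorphism.
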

Thus, every homogeneous symmetric function 
\(f\) of degree \(n\) can be written
uniquely as
\[
        f=\sum_{\mu\vdash n} b_\mu m_\mu.
\]
The coefficient of \(m_\mu\) in this expansion we denote by 
\[
        [m_\mu]f:=b_\mu \in \Z
\]

The \textit{Newton power sums} are
\[
        p_k:=\sum_i x_i^k \in \Lambda^k.
\]
For a partition \(\lambda=(\lambda_1,\ldots,\lambda_r)\), set
\[
        p_\lambda:=p_{\lambda_1}\cdots p_{\lambda_r}.
\]

\begin{lemma}[Support length of power sums]\label[lemma]{lem:p-support-length}
Let \(\lambda,\mu\vdash n\). Then
\[
        [m_\mu]p_\lambda=0
        \qquad\text{whenever}\qquad
        \ell(\mu)>\ell(\lambda).
\]
\end{lemma}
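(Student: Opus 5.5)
The plan is to expand $p_\lambda$ directly as a product of power sums and read off which monomials can appear. Write
\[
        p_\lambda=\prod_{j=1}^{r}\Bigl(\sum_i x_i^{\lambda_j}\Bigr)
        =\sum_{(i_1,\ldots,i_r)}x_{i_1}^{\lambda_1}\cdots x_{i_r}^{\lambda_r},
\]
with the sum over all $r$-tuples of indices, where $r=\ell(\lambda)$. Each term is a monomial whose set of variables with nonzero exponent is $\{x_{i_1},\ldots,x_{i_r}\}$. This set has at most $r$ elements, with equality only when the indices are pairwise distinct. So every monomial occurring in $p_\lambda$, with any multiplicity, involves at most $\ell(\lambda)$ distinct variables.

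Next, I would use the basis property of the $m_\mu$ to relate the coefficient $[m_\mu]p_\lambda$ to the coefficient of a single monomial. Since $p_\lambda=\sum_\mu b_\mu m_\mu$, and the monomials $x^\mu:=x_1^{\mu_1}\cdots x_s^{\mu_s}$ appear in $m_\mu$ with coefficient $1$ and in no other $m_\nu$ (distinct $m_\nu$ have disjoint monomial supports), we get that $[m_\mu]p_\lambda$ equals the coefficient of $x^\mu$ in $p_\lambda$. Here $s=\ell(\mu)$. The monomial $x^\mu$ involves exactly $\ell(\mu)$ distinct variables. If $\ell(\mu)>\ell(\lambda)$, the first step shows that no term of the expansion equals $x^\mu$, so this coefficient is $0$.

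The argument is all in finitely many variables $x_1,\ldots,x_N$ with $N\ge n$. It passes to $\Lambda$ because the projection to $N$ variables is an isomorphism in degree $n$ for $N\ge n$ and sends $m_\mu\mapsto m_\mu$ and $p_\lambda\mapsto p_\lambda$. There is no real obstacle. The only point needing care is the disjoint-support fact that identifies $[m_\mu]f$ with a monomial coefficient, and that is immediate from the definition of $m_\mu$ as the sum of the distinct permutations of $x^\mu$.
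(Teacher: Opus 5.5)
Your proof is correct and follows the same approach as the paper: you expand $p_\lambda$ into monomials $x_{i_1}^{\lambda_1}\cdots x_{i_r}^{\lambda_r}$, observe that each involves at most $\ell(\lambda)$ distinct variables, and conclude that no monomial of type $\mu$ with $\ell(\mu)>\ell(\lambda)$ can occur. You also spell out two points the paper leaves implicit: why $[m_\mu]p_\lambda$ equals the coefficient of the single monomial $x^\mu$ (the $m_\nu$ have disjoint monomial supports), and why the finite-variable argument passes to $\Lambda$.
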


\begin{proof}
Write \(\lambda=(\lambda_1,\ldots,\lambda_r)\).  A monomial appearing in
\[
        p_\lambda=p_{\lambda_1}\cdots p_{\lambda_r}
\]
has the form
\[
        x_{i_1}^{\lambda_1}\cdots x_{i_r}^{\lambda_r}.
\]
After collecting equal indices, such a monomial involves at most \(r\)
distinct variables.  Hence no monomial of type \(\mu\) can occur if
\(\ell(\mu)>r=\ell(\lambda)\).
\end{proof}

The \textit{complete homogeneous symmetric function} \(h_k\) is
\[
        h_k:=\sum_{i_1\le\cdots\le i_k}x_{i_1}\cdots x_{i_k} \in \Lambda^k
\]
For example
\[
        h_1=\sum_i x_i, \quad 
        h_2=\sum_i x_i^2+\sum_{i<j}x_ix_j, \quad
        h_3
        =
        \sum_i x_i^3
        +
        \sum_{i\ne j}x_i^2x_j
        +
        \sum_{i<j<k}x_ix_jx_k.
\]
For a partition \(\lambda=(\lambda_1,\ldots,\lambda_r)\), set
\[
        h_\lambda:=h_{\lambda_1}\cdots h_{\lambda_r}.
\]

\begin{definition}\label[definition]{def:n-lambda-mu}
For partitions \(\lambda,\mu\vdash n\), define the integers
\(n_\lambda^\mu\) by the expansion
\[
        h_\lambda=\sum_{\mu\vdash n} n_\lambda^\mu m_\mu .
\]
\end{definition}

\begin{lemma}\label[lemma]{lem:matrix-count}
The coefficient \(n_\lambda^\mu\) is the number of nonnegative integer
matrices \(A=(a_{ij})\) with row sums \(\lambda_i\) and column sums \(\mu_j\):
\[
        \sum_j a_{ij}=\lambda_i,
        \qquad
        \sum_i a_{ij}=\mu_j.
\]
\end{lemma}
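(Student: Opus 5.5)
We compute the coefficient of a fixed monomial in $h_\lambda$ by expanding the product directly. Write $\lambda=(\lambda_1,\ldots,\lambda_r)$ and $\mu=(\mu_1,\ldots,\mu_s)$. By \cref{def:n-lambda-mu} and the fact that $\{m_\mu\}$ is a basis whose elements have disjoint monomial supports, $n_\lambda^\mu=[m_\mu]h_\lambda$ equals the coefficient in $h_\lambda$ of the single representative monomial
\[
x^\mu:=x_1^{\mu_1}x_2^{\mu_2}\cdots x_s^{\mu_s}.
\]
Indeed, $m_\mu$ contains $x^\mu$ with coefficient $1$, and no other $m_\nu$ contains it. So it suffices to show that $[x^\mu]h_\lambda$ is the number of nonnegative integer $r\times s$ matrices with row sums $\lambda_i$ and column sums $\mu_j$.

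Next, we read off the monomial expansion of each factor. From the definition of $h_k$ as the sum over weakly increasing index sequences, $h_k$ is the sum of all monomials of total degree $k$, each with coefficient exactly $1$. A monomial $x_1^{b_1}x_2^{b_2}\cdots$ with $\sum_j b_j=k$ corresponds to exactly one sequence $i_1\le\cdots\le i_k$. Only monomials in $x_1,\ldots,x_s$ can contribute to $x^\mu$, so we may restrict all variables to these.

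Now expand the product $h_\lambda=h_{\lambda_1}\cdots h_{\lambda_r}$. A term of the expansion is a choice, for each $i$, of a monomial $x_1^{a_{i1}}\cdots x_s^{a_{is}}$ from $h_{\lambda_i}$, with $\sum_j a_{ij}=\lambda_i$. Each such term has coefficient $1$, and the product of the chosen monomials equals $x^\mu$ exactly when $\sum_i a_{ij}=\mu_j$ for all $j$. Choices of terms are thus in bijection with matrices $A=(a_{ij})$ having the prescribed row and column sums. Since all coefficients are $1$ and nonnegative, there is no cancellation, and summing over these choices gives the stated count.

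There is no real obstacle. The only point needing care is the first step, that is, identifying $[m_\mu]f$ with the coefficient of one chosen monomial of type $\mu$. This relies on the $m_\mu$ having disjoint supports and symmetry making the choice of representative irrelevant. If infinitely many variables are a concern, we work in $\Z[x_1,\ldots,x_N]^{S_N}$ with $N\ge n$, where the projection from $\Lambda^n$ is an isomorphism.
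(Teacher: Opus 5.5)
Your proof is correct and follows the same route as the paper: expand $h_{\lambda_1}\cdots h_{\lambda_r}$, and let $a_{ij}$ be the exponent of the $j$-th variable contributed by the factor $h_{\lambda_i}$. Fixing the representative monomial $x_1^{\mu_1}\cdots x_s^{\mu_s}$ makes precise the paper's ``grouping variables by their final exponents''. It also builds in the labelled-column convention of the remark that follows the lemma.
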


\begin{proof}
Write
\[
h_\lambda=h_{\lambda_1}\cdots h_{\lambda_r}.
\]
A monomial appearing in \(h_{\lambda_i}\) has total degree \(\lambda_i\).
When multiplying the factors \(h_{\lambda_i}\), the exponent of a variable
records how much of the total degree is assigned to that variable by each
factor.

Equivalently, choosing a monomial contribution to \(h_\lambda\) amounts to
choosing nonnegative integers \(a_{ij}\), where \(a_{ij}\) is the contribution
of the factor \(h_{\lambda_i}\) to the exponent of the \(j\)-th variable.
Since the \(i\)-th factor has total degree \(\lambda_i\), we have
\[
        \sum_j a_{ij}=\lambda_i.
\]
If the resulting monomial has type \(\mu\), then, after grouping variables by
their final exponents, the column sums are
\[
        \sum_i a_{ij}=\mu_j.
\]
Thus the coefficient of \(m_\mu\) in \(h_\lambda\) is exactly the number of
such matrices.
\end{proof}

In particular when $\mu=1^n$, one can explicitly compute:
\begin{cor}\label[cor]{cor: fla za nlambda1n}
   \[
        n_\lambda^{1^n}
        =
        \frac{n!}{\prod_i\lambda_i!}.
\]
\end{cor}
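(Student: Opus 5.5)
We specialize \cref{lem:matrix-count} to \(\mu=1^n\). In that case every column sum equals \(1\). So each column \(j\in\{1,\dots,n\}\) of a nonnegative integer matrix \(A\) counted by \(n_\lambda^{1^n}\) has exactly one entry equal to \(1\) and all others equal to \(0\). Such a matrix is therefore the same data as a function
\[
        f:\{1,\dots,n\}\to\{1,\dots,r\},\qquad f(j)=\text{the row } i \text{ with } a_{ij}=1 .
\]
The row-sum condition \(\sum_j a_{ij}=\lambda_i\) says exactly that \(|f^{-1}(i)|=\lambda_i\) for each \(i\). Hence \(n_\lambda^{1^n}\) is the number of ordered set partitions of \(\{1,\dots,n\}\) into labeled blocks \(B_1,\dots,B_r\) with \(|B_i|=\lambda_i\).

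The count of such ordered set partitions is the multinomial coefficient \(\binom{n}{\lambda_1,\dots,\lambda_r}=n!/\prod_i\lambda_i!\). One sees this by choosing \(B_1\) in \(\binom{n}{\lambda_1}\) ways, then \(B_2\) from the remaining elements in \(\binom{n-\lambda_1}{\lambda_2}\) ways, and so on, and telescoping the product. Equal parts of \(\lambda\) cause no overcounting, because the rows of \(A\) are indexed by the positions \(i\), not by the values \(\lambda_i\).

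As an independent check, one can compute directly that \([x_1x_2\cdots x_n]h_\lambda\) equals the coefficient of the squarefree monomial in \(\prod_i e_{\lambda_i}(x_1,\dots,x_n)\). This holds because only the squarefree monomials of each \(h_{\lambda_i}\) can contribute. That coefficient is again the multinomial coefficient.

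There is no real obstacle. The only point needing care is the indexing convention in \cref{lem:matrix-count}: the columns of \(A\) must correspond to the \(n\) distinct variables of the monomial \(x_1\cdots x_n\), so that we count labeled assignments and do not quotient by any symmetry.
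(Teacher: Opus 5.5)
Your proof is correct and follows the same route as the paper, which states this corollary without separate proof as the immediate specialization of \cref{lem:matrix-count} to $\mu=1^n$. Column sums equal to $1$ turn the counted matrices into labeled assignments of the $n$ columns to rows of sizes $\lambda_i$, giving the multinomial coefficient $n!/\prod_i\lambda_i!$; your cross-check via the squarefree monomial in $\prod_i e_{\lambda_i}$ is valid but not needed.
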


\begin{remark}\label[remark]{rem:labelled-columns}
In \cref{lem:matrix-count}, the columns are labeled by the chosen support
points of a monomial of type \(\mu\).  This matters when \(\mu\) has repeated
parts.  For example, for \(\mu=(1,1,1)\), the three columns correspond to
three distinct variables \(x_i,x_j,x_k\), even though the column sums are
equal.  Thus one obtains
        $n_{111}^{111}=6,$
not \(1\).
\end{remark}

\begin{example}[The coefficients \(n_\lambda^\mu\) for \(n=3\)]
For \(n=3\), order the partitions by
\[
        3,\quad 21,\quad 111.
\]
The coefficients \(n_\lambda^\mu\), with rows indexed by
\(\mu\) and columns indexed by \(\lambda\), are
\[
\begin{array}{c|ccc}
\mu\backslash\lambda
&3&21&111\\
\hline
3   &1&1&1\\
21  &1&2&3\\
111 &1&3&6
\end{array}
\]
Equivalently,
\[
        h_3=m_3+m_{21}+m_{111},
\]
\[
        h_{21}=h_2h_1=m_3+2m_{21}+3m_{111},
\]
\[
        h_{111}=h_1^3=m_3+3m_{21}+6m_{111}.
\]
Thus, for example,
\[
        n_{21}^{111}=3,
\]
because there are three ways to distribute three distinct simple points into
packets of sizes \((2,1)\).
\end{example}

Another standard statement is that complete symmetric functions make a basis:

\begin{lemma}\label[lemma]{lem:h-basis}
The set $\{ h_\mu \mid \mu \vdash n\}$ forms a $\Z$-module basis of $\Lambda^n$.
\end{lemma}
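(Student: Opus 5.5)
I will prove that $\{h_\mu\mid\mu\vdash n\}$ is a $\Z$-basis of $\Lambda^n$ by showing that the transition matrix $N=(n_\lambda^\mu)$ to the monomial basis is invertible over $\Z$. The monomial functions $\{m_\mu\}$ are already a $\Z$-basis by the earlier proposition. By \cref{def:n-lambda-mu} we have $h_\lambda=\sum_\mu n_\lambda^\mu m_\mu$ with integer coefficients, and both index sets are the same finite set of partitions of $n$. So it suffices to show that $N$ has determinant $\pm1$. I will get this from unitriangularity after a suitable reindexing.

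The key combinatorial step uses the matrix count of \cref{lem:matrix-count}. Let $\lambda'$ denote the conjugate partition. I claim $n_\lambda^{\mu}=0$ unless $\mu\le\lambda'$ in dominance order, and $n_\lambda^{\lambda'}=1$. The argument is that a nonnegative integer matrix with row sums $\lambda_i$ and column sums $\mu_j$ is bounded entrywise in a useful way after a comparison with $0/1$ matrices.

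This is the step I expect to be the main obstacle. Entries may exceed $1$, so the Gale--Ryser argument cannot be applied directly. I would try to avoid it altogether with a simpler classical route, which I will commit to: use the generating function identity
\[
\sum_k h_k t^k=\Big(\sum_k (-1)^k e_k t^k\Big)^{-1},
\]
which holds in $\Lambda[[t]]$. Comparing coefficients of $t^k$ gives the recursion
\[
h_k=\sum_{i=1}^k(-1)^{i-1}e_ih_{k-i}.
\]
Induction on $k$ then shows $h_k\in\Z[e_1,e_2,\dots]$. Symmetrically, $e_k\in\Z[h_1,h_2,\dots]$, since the identity is symmetric under swapping $h$ and $e$ with signs. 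Hence $\Z[h_1,h_2,\dots]=\Z[e_1,e_2,\dots]$.

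Now I invoke the fundamental theorem on symmetric functions, that $\Lambda=\Z[e_1,e_2,\dots]$ with the $e_r$ algebraically independent, from \cite[Ch.~I]{Macdonald}. This fact itself follows from the leading-term argument $e_{\lambda'}=m_\lambda+\sum_{\mu<\lambda}(\cdot)m_\mu$, which is an honest unitriangularity statement with $0/1$ matrices. Since $\Lambda$ is then a polynomial ring freely generated by the $e_r$, the ring map $\omega$ defined by $e_r\mapsto h_r$ is surjective by the previous paragraph. It is an involution by the symmetry of the identity, and therefore an automorphism.

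Finally, $\omega$ preserves degree and sends the basis $\{e_\mu\mid\mu\vdash n\}$ of $\Lambda^n$ to $\{h_\mu\mid\mu\vdash n\}$. An automorphism carries a basis to a basis, so $\{h_\mu\}$ is a $\Z$-basis of $\Lambda^n$. The remaining routine check is that the recursion is integral, which is clear because every coefficient in it is $\pm1$.
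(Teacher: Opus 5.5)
Your committed argument, namely the relation $\sum_{i=0}^k(-1)^ie_ih_{k-i}=0$, the fundamental theorem $\Lambda=\Z[e_1,e_2,\dots]$, and the degree-preserving involution $\omega\colon e_r\mapsto h_r$ carrying the basis $\{e_\mu\}$ of $\Lambda^n$ to $\{h_\mu\}$, is correct and is the standard proof in \cite[Ch.~I, \S2]{Macdonald} that the paper cites without reproducing. Delete the abandoned opening plan, though, because its key claim is false: $n_2^{2}=1$ even though $(2)\not\le(1,1)=(2)'$, and in fact every $n_\lambda^\mu$ is positive (a nonnegative integer matrix with any prescribed margins of equal total exists), so $N$ is never triangular for $n\ge2$; the conjugate/dominance unitriangularity you had in mind holds for the $0/1$-matrix expansion of $e_\lambda$, not for $h_\lambda$.
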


Thus, this makes the following definition valid: 
\begin{definition}\label[definition]{def:a-lambda-mu}
For partitions \(\lambda,\mu\vdash n\), define \(a_{\lambda\mu}\in\Z\) by
the unique expansion
\[
        p_\lambda=\sum_{\mu\vdash n}a_{\lambda\mu}h_\mu.
\]
\end{definition}

\begin{proposition}\label[proposition]{prop:a-triangular}
The coefficients \(a_{\lambda\mu}\) satisfy
\[
        a_{\lambda\mu}=0
        \qquad\text{unless}\qquad
        \mu=\lambda
        \text{ or }
        \ell(\mu)>\ell(\lambda).
\]
Moreover,
\[
        a_{\lambda\lambda}=\prod_i\lambda_i,
        \qquad
        a_{\lambda,1^n}=(-1)^{n-\ell(\lambda)}.
\]
\end{proposition}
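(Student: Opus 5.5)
The plan is to reduce everything to the single-part case $p_k$, via the classical generating-function identity between power sums and complete homogeneous functions, and then multiply. Set $H(t):=\sum_{k\ge0}h_kt^k$, with $h_0=1$. The standard identity (\cite[Ch.~I]{Macdonald}) is
\[
        \log H(t)=\sum_{k\ge1}\frac{p_k}{k}t^k .
\]
Expanding $\log H(t)=\log\bigl(1+(H(t)-1)\bigr)=\sum_{r\ge1}\frac{(-1)^{r-1}}{r}(H(t)-1)^r$ and extracting the coefficient of $t^k$ gives
\[
        p_k=\sum_{r\ge1}\frac{(-1)^{r-1}k}{r}\sum_{\substack{(\beta_1,\ldots,\beta_r)\models k\\ \beta_i\ge1}}h_{\beta_1}\cdots h_{\beta_r}.
\]
Since the $h_j$ commute, we can group compositions by their underlying partition. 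This yields $p_k=\sum_{\nu\vdash k}c_\nu h_\nu$, where
\[
        c_\nu=\frac{(-1)^{\ell(\nu)-1}k}{\ell(\nu)}\cdot\#\{\text{rearrangements of }\nu\}.
\]
The coefficients $c_\nu$ are integers by \cref{lem:h-basis}, since $p_k\in\Lambda^k$.

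From this formula I will read off the three facts needed for one part. First, $c_{(k)}=k$, because the only composition of $k$ of length $1$ is $(k)$. Second, $c_{1^k}=\frac{(-1)^{k-1}k}{k}\cdot1=(-1)^{k-1}$. Third, every other $\nu$ has $\ell(\nu)\ge2$. So
\[
        p_k=k\,h_k+\bigl(\text{a $\Z$-combination of $h_\nu$ with $\ell(\nu)\ge2$}\bigr),
\]
and the coefficient of $h_{1^k}$ is $(-1)^{k-1}$.

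Next I pass to $p_\lambda=\prod_{i=1}^r p_{\lambda_i}$, where $r=\ell(\lambda)$, and expand the product. The key observation is that $h_\nu h_{\nu'}=h_{\nu\cup\nu'}$, so lengths add. A term in the expansion chooses some $\nu^{(i)}\vdash\lambda_i$ for each $i$ and gives $h_\mu$ with $\mu=\bigcup_i\nu^{(i)}$, so that $\ell(\mu)=\sum_i\ell(\nu^{(i)})\ge r$.

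Equality $\ell(\mu)=r$ forces $\nu^{(i)}=(\lambda_i)$ for every $i$, hence $\mu=\lambda$, and that term carries coefficient $\prod_i\lambda_i$. Every other term has $\ell(\mu)>\ell(\lambda)$, so in particular $\mu\ne\lambda$, and it cannot contribute to $h_\lambda$. Because the $h_\mu$ form a basis (\cref{lem:h-basis}), this proves both the vanishing statement and $a_{\lambda\lambda}=\prod_i\lambda_i$.

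For $\mu=1^n$, the union $\bigcup_i\nu^{(i)}$ equals $1^n$ only when $\nu^{(i)}=1^{\lambda_i}$ for all $i$. The coefficient is therefore $\prod_i(-1)^{\lambda_i-1}=(-1)^{n-\ell(\lambda)}$. When $\lambda_i=1$ the two special cases $(k)$ and $1^k$ coincide, and both formulas give $1$, so nothing is double counted.

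The main obstacle is the bookkeeping in the one-part expansion: the logarithm identity introduces the denominators $1/r$, and one must make sure no other composition contributes to $h_k$ or to $h_{1^k}$. Both points follow from the length-$1$ and all-ones cases being unique compositions, and integrality comes for free from the basis property. If one prefers to avoid logarithms, the same conclusions can be obtained inductively from Newton's identity $kh_k=\sum_{i=1}^k p_ih_{k-i}$ by tracking the leading and $h_{1^k}$ coefficients.
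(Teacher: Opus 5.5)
Your proposal is correct and follows essentially the same route as the paper: expand $\log\bigl(1+\sum_{r\ge1}h_rt^r\bigr)=\sum_{m\ge1}p_mt^m/m$ to get $p_m=m\,h_m+(\text{terms }h_\rho\text{ with }\ell(\rho)\ge 2)$ with $[h_{1^m}]p_m=(-1)^{m-1}$, then use that lengths add under multiplication of the $h_\nu$. Your composition count reproduces exactly the paper's coefficient $(-1)^{\ell(\rho)-1}m(\ell(\rho)-1)!/\prod_j m_j(\rho)!$; you are also a bit more explicit than the paper on two points, namely excluding $\ell(\mu)<\ell(\lambda)$ and getting integrality from uniqueness of the $h$-expansion.
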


\begin{proof}
By expanding the logarithmic form of Newton's identities
\[
        \sum_{m\ge1}\frac{p_m}{m}t^m
        =
        \log\left(1+\sum_{r\ge1}h_rt^r\right),
\]
which follows from \cite[Chapter~I, \S2, (2.10)]{Macdonald}, one obtains
\[
        p_m
        =
        \sum_{\rho\vdash m}
        (-1)^{\ell(\rho)-1}
        \frac{m(\ell(\rho)-1)!}{\prod_j m_j(\rho)!}
        h_\rho .
\]

In particular, the length-one term is
\[
        m h_m.
\]
All other terms have length strictly larger than \(1\).  Thus
\[
        p_m
        =
        m h_m
        +
        \sum_{\substack{\rho\vdash m\\ \ell(\rho)>1}}
        b_{m\rho}h_\rho
\]
for some coefficients \(b_{m\rho}\).

Now let
       $ \lambda=(\lambda_1,\ldots,\lambda_r), r=\ell(\lambda).$
Then
\[
        p_\lambda
        =
        p_{\lambda_1}\cdots p_{\lambda_r}.
\]
In this product, the only way to obtain a term of total length exactly \(r\)
is to take the length-one term
$\lambda_i h_{\lambda_i}$
from every factor \(p_{\lambda_i}\).  This gives
\[
        \left(\prod_i\lambda_i\right)
        h_{\lambda_1}\cdots h_{\lambda_r}
        =
        \left(\prod_i\lambda_i\right)h_\lambda.
\]
If, from at least one factor, we choose a term \(h_\rho\) with
        $\ell(\rho)>1,$
then the resulting product has length strictly larger than \(r\).  Therefore
there is no equal-length mixing: if
\[
        \ell(\mu)=\ell(\lambda),
\]
then
\[
        a_{\lambda\mu}=0
        \qquad\text{unless}\qquad
        \mu=\lambda.
\]
This also proves
\[
        a_{\lambda\lambda}=\prod_i\lambda_i.
\]

Finally, the coefficient of \(h_{1^n}\) in \(p_\lambda\) is obtained by
taking the coefficient of \(h_{1^{\lambda_i}}\) in each factor
\(p_{\lambda_i}\).  From the one-part formula,
       $ [h_{1^m}]p_m=(-1)^{m-1}.$
Hence
\[
        a_{\lambda,1^n}
        =
        \prod_i (-1)^{\lambda_i-1}
        =
        (-1)^{\sum_i\lambda_i-\ell(\lambda)}
        =
        (-1)^{n-\ell(\lambda)}. \qedhere
\]
\end{proof}

This immediately yields
\begin{cor}[Upper-triangularity]
For any $n$, the matrix $$A_n:=(a_{\lambda,\mu})_{\lambda,\mu\vdash n}$$
is upper-triangular with respect to any total order that refines the partial order given by partition length.
Moreover, as its diagonal entries are non-zero, it is nondegenerate, $\det A_n\neq 0.$
\end{cor}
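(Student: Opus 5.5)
The statement to prove is the corollary: for any total order refining the length order, the matrix $A_n=(a_{\lambda\mu})$ is upper-triangular with nonzero diagonal, hence $\det A_n\neq 0$. The plan is to read this off directly from \cref{prop:a-triangular}, after fixing conventions.

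Fix a total order $\prec$ on $\{\lambda\vdash n\}$ refining length. That is, $\ell(\lambda)<\ell(\mu)$ implies $\lambda\prec\mu$, and ties in length are broken arbitrarily. Index rows of $A_n$ by $\lambda$ and columns by $\mu$, both listed in increasing $\prec$-order. Upper-triangularity means that $a_{\lambda\mu}=0$ whenever $\mu\prec\lambda$.

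Suppose $\mu\prec\lambda$. Then $\mu\neq\lambda$. Since $\prec$ refines length, $\mu\prec\lambda$ forces $\ell(\mu)\le\ell(\lambda)$: if instead $\ell(\mu)>\ell(\lambda)$, we would have $\lambda\prec\mu$. So $\mu\neq\lambda$ and $\ell(\mu)\not>\ell(\lambda)$. By \cref{prop:a-triangular}, $a_{\lambda\mu}=0$. The one subtle point is the equal-length case $\ell(\mu)=\ell(\lambda)$ with $\mu\neq\lambda$. Here \cref{prop:a-triangular} gives vanishing because no equal-length mixing occurs. This is what makes the tie-breaking irrelevant, and I expect it to be the only step that needs care.

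For the diagonal, \cref{prop:a-triangular} gives $a_{\lambda\lambda}=\prod_i\lambda_i$. This is a positive integer, so it is nonzero. The determinant of a triangular matrix is the product of its diagonal entries, so
\[
\det A_n=\prod_{\lambda\vdash n}\prod_i\lambda_i\neq 0.
\]
One may also remark that $A_n$ is then invertible over $\Q$, though generally not over $\Z$. This matches the fact that $\{p_\lambda\}$ is a $\Q$-basis of $\Lambda^n_\Q$ but not a $\Z$-basis of $\Lambda^n$. That remark is not needed for the corollary.
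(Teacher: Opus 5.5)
Your proposal is correct and follows essentially the same route as the paper: the paper also obtains this corollary directly from \cref{prop:a-triangular}, using the vanishing $a_{\lambda\mu}=0$ unless $\mu=\lambda$ or $\ell(\mu)>\ell(\lambda)$ (in particular no equal-length mixing) together with the diagonal entries $a_{\lambda\lambda}=\prod_i\lambda_i\neq 0$. You check explicitly that your row/column convention makes $\mu\prec\lambda$ exactly the below-diagonal entries; the paper states this without spelling it out.
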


\begin{example}\label[example]{ex:n-equals-3}
For \(n=3\), we have
\[
        p_3=3h_3-3h_{21}+h_{111},
\]
\[
        p_{21}=p_2p_1=(2h_2-h_{11})h_1=2h_{21}-h_{111},
\]
\[
        p_{111}=h_{111}.
\]
Thus, 
the change-of-basis matrix is
\[
\begin{pmatrix}
p_3\\
p_{21}\\
p_{111}
\end{pmatrix}
=
\begin{pmatrix}
3&-3&1\\
0&2&-1\\
0&0&1
\end{pmatrix}
\begin{pmatrix}
h_3\\
h_{21}\\
h_{111}
\end{pmatrix}.
\]
\end{example}

    Unlike $\{m_\mu\}_{\mu\vdash n}$ and $\{h_\mu\}_{\mu\vdash n}$, the set 
    $\{p_\mu\}_{\mu\vdash n}$
    is \textit{not} a $\Z$-module basis of $\Lambda^n$, since the matrix $A_n$ has $\det\neq \pm 1$. Nevertheless, 
    as the same matrix is nondegenerate, we have

\begin{lemma}\label[lemma]{lem:p-basis}
The set $\{ p_\mu \mid \mu \vdash n\}$ forms a $\Q$-vector space  basis of $\Lambda_\Q^n$.
\end{lemma}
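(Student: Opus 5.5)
The plan is to deduce \cref{lem:p-basis} from \cref{lem:h-basis} together with the upper-triangularity of the transition matrix $A_n$ established in \cref{prop:a-triangular} and its corollary. By \cref{lem:h-basis}, the family $\{h_\mu\}_{\mu\vdash n}$ is a $\Z$-basis of $\Lambda^n$. Tensoring with $\Q$ shows it is a $\Q$-basis of $\Lambda^n_\Q=\Lambda^n\otimes\Q$, since tensoring a free $\Z$-module with $\Q$ preserves bases. In particular $\dim_\Q\Lambda^n_\Q$ equals the number $p(n)$ of partitions of $n$.

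Next, by \cref{def:a-lambda-mu} each $p_\lambda$ expands as $\sum_\mu a_{\lambda\mu}h_\mu$. In the coordinates given by the $h$-basis, the family $\{p_\lambda\}_{\lambda\vdash n}$ is therefore described by the square matrix $A_n=(a_{\lambda\mu})$. Fix a total order on partitions of $n$ refining the length order. By \cref{prop:a-triangular}, $A_n$ is upper-triangular in this order, with diagonal entries $a_{\lambda\lambda}=\prod_i\lambda_i$. These are positive integers, hence nonzero in $\Q$, so $\det A_n=\prod_{\lambda\vdash n}\prod_i\lambda_i\neq 0$ and $A_n$ is invertible over $\Q$.

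It follows that the $p(n)$ elements $p_\lambda$ are linearly independent in the $p(n)$-dimensional space $\Lambda^n_\Q$, so they form a basis. More concretely, we can express each $h_\mu$ in terms of the $p_\lambda$ by back-substitution, proceeding in decreasing order: length-$n$ partitions first, then shorter ones. This back-substitution requires dividing by $\prod_i\lambda_i$. That division is exactly why the statement holds over $\Q$ but fails over $\Z$.

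There is no genuine obstacle. The only point needing care is that the triangularity in \cref{prop:a-triangular} is with respect to a \emph{partial} order (length). One must check that entries $a_{\lambda\mu}$ with $\ell(\mu)=\ell(\lambda)$ and $\mu\neq\lambda$ vanish. This is the "no equal-length mixing" statement already proved there. Given that, any refining total order makes $A_n$ genuinely triangular, and the determinant computation is valid.
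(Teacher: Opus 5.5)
Your proposal is correct and matches the paper's argument: the paper deduces \cref{lem:p-basis} from \cref{lem:h-basis} together with the nondegeneracy of the upper-triangular matrix $A_n$, whose diagonal entries $\prod_i\lambda_i$ are nonzero, exactly as you do. Your remark that the family fails to be a $\Z$-basis because $\det A_n\neq\pm1$ also matches the paper's comment; strictly, this needs $n\ge 2$.
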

\section{Hilbert schemes and upward flow}

From now on fix a connected smooth projective curve \(\Sigma\), and write
\[
        M_n=M_n(\Sigma):=\Hilb^n(\Sigma\times\C).
\]
Let
\[
        p:\Sigma\times\C\to\C
\]
be the projection, and let
\[
        h_n:M_n\fun \Sym^n(\Sigma \times \C)\fun \Sym^n\C
\]
be the composition of the Hilbert--Chow morphism with \(\Sym^n(p)\).  The
\(\C^*\)-actions
\[
        t\cdot(x,z)=(x,tz),\  t\cdot z = tz
\]
on \(\Sigma\times\C\) and $\C$ induce actions on \(M_n\), and $\Sym^n \C$, making the map $h_n$ equivariant.

The $\CC$-fixed locus on $M_n$ is
indexed by partitions \(\lambda\vdash n\).  If
\[
        \lambda=(1^{\alpha_1}2^{\alpha_2}\cdots),
\]
then the corresponding fixed component is
\[
        F_\lambda=\Sym^\lambda \Sigma
        :=
        \prod_{r\ge1}\Sym^{\a_r}(\Sigma).
\]
This is explained in \cite[Ch.7]{Nak99} for $\Hilb^n(T^*\Sigma)$, but the same proof works for any line bundle on a curve, hence in particular, for the trivial bundle $\Sigma \times \C$ that we are considering here.

The $\C^*$-action on $M_n$ is \textit{contracting}, meaning that every point has the limit
$$\forall p\in M_n \ \ \ \exists \lim_{t\to0}t\cdot p$$

The subvariety of points that also have the other limit is called 
\textit{the core }
$$C_n:=\{q\in M_n \mid \exists \lim_{t\to \infty} t\cdot q\}$$
In our setup, since $h_n$ is equivariant and the $\C^*$-action contracts $\Sym^n \C$ to a point, we have equality of sets:
$$C_n=h_n^{-1}(0)$$

For a fixed
component $F_\lambda$, denote the downward flow and its closure in the core:
\[
        W^-_{F_\lambda}
        :=
        \{q\in M_n\mid \lim_{t\to\infty}t\cdot q\in F_\lambda\}, \ 
        C_\lambda:=\overline{W^-_{F_\lambda}}\subset C_n.
\]

\begin{lemma}[Core components]\label[lemma]{lem:core-components}
The subvarieties $C_\lambda$, for $\lambda\vdash n$, are precisely the
irreducible components of the core
        $C_n$. Thus,     
their fundamental classes $[C_\lambda]$ form a basis of
        $H_{2n}(C_n)\iso H_{2n}(M_n)$.
\end{lemma}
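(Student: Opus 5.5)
The plan is to use the Białynicki-Birula decomposition for the $\C^*$-action run ``backwards'' on the core, together with the equality $C_n=h_n^{-1}(0)$ and a dimension count. Since $\lim_{t\to0}$ exists for every point of $M_n$, the limit $\lim_{t\to\infty}$ exists exactly on $C_n$. Hence
\[
C_n=\bigsqcup_{\lambda\vdash n}W^-_{F_\lambda}
\]
is a finite disjoint union of locally closed pieces. By Białynicki-Birula applied to the inverse action, each $W^-_{F_\lambda}\to F_\lambda$ is an affine bundle whose fibre dimension equals the number of negative weights of $T M_n|_{F_\lambda}$. Once we know every piece has dimension exactly $n$, the irreducible components of $C_n$ are precisely the closures $C_\lambda$. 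These closures are pairwise distinct, because their open dense strata are disjoint and of equal dimension. Each $W^-_{F_\lambda}$ is irreducible, since $F_\lambda$ is.

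The key step is the dimension count $\dim W^-_{F_\lambda}=n$. I would first get the upper bound $\dim C_n\le n$ from $C_n=h_n^{-1}(0)$. Set-theoretically,
\[
h_n^{-1}(0)=\rho^{-1}\bigl(\Sym^n(\Sigma\times\{0\})\bigr),
\]
and the fibres of the Hilbert--Chow morphism over the stratum of $\Sym^n(\Sigma\times 0)$ with multiplicity type $\mu$ are products of punctual Hilbert schemes, of dimension $\sum_i(\mu_i-1)=n-\ell(\mu)$ (Briançon). The stratum itself has dimension $\ell(\mu)$, so each preimage has dimension at most $n$. For the lower bound I would use symplectic-type weight symmetry. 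The surface $\Sigma\times\C$ carries the $\C^*$-equivariant holomorphic symplectic form $dz\wedge\omega_\Sigma$ only when $\Sigma$ has trivial canonical bundle, so in general I would argue differently. One option is to compute the $\C^*$-weights of $T_{p}M_n=\mathrm{Ext}^1$ at a fixed point directly from the monomial-ideal (Nakajima-style) description, as in \cite[Ch.~7]{Nak99}, and check that exactly $n$ weights are $\le 0$ in total. Here $\dim F_\lambda$ counts the zero weights and the negative weights make up the rest. A cleaner alternative is to bound each piece below by local analysis: on the open stratum of $F_\lambda$, where the $\alpha_r$ points are distinct, $M_n$ is étale-locally a product of $\Hilb^{r}(\C^2)$'s. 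There the downward flow of the monomial ideal $(x,z^r)$ is classically $r$-dimensional, e.g.\ it is $\Hilb^r$ of the curvilinear germ, of dimension $r-1$, times the translation along $\Sigma$. Summing over the parts gives exactly $n$.

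Given $\dim W^-_{F_\lambda}=n$ for all $\lambda$, the statement follows: $C_n$ is pure of dimension $n$ and its components are the $C_\lambda$. For the homology claim, $H_{2n}(C_n)$ of an $n$-dimensional compact variety is freely spanned by the fundamental classes of its top-dimensional components. The isomorphism $H_{2n}(C_n)\iso H_{2n}(M_n)$ comes from the standard fact that a contracting action retracts $M_n$ onto its core: the map $x\mapsto\lim_{t\to0}t\cdot x$ realises $C_n$ as a deformation retract.

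I expect the main obstacle to be the lower bound, i.e.\ computing the number of negative weights at $F_\lambda$ for a general curve $\Sigma$ without a symplectic form. I would first attempt the local product reduction to $\Hilb(\C^2)$ at generic points of $F_\lambda$, since weights are locally constant along $F_\lambda$.
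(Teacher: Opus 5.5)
Your proof is essentially the paper's. Both write $C_n$ as the disjoint union of the downward Bia\l ynicki-Birula affine bundles $W^-_{F_\lambda}\to F_\lambda$. Both then get $\dim W^-_{F_\lambda}=\ell(\lambda)+\sum_r\alpha_r(r-1)=n$ from the local product structure at the distinct-support locus, using the $(r-1)$-dimensional punctual contribution (the paper cites Brian\c{c}on for this). Your Hilbert--Chow upper bound is redundant. Your remark that disjoint equidimensional pieces must have distinct closures fills a step the paper leaves implicit.

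One correction. For $n\ge2$ the map $x\mapsto\lim_{t\to0}t\cdot x$ is the Bia\l ynicki-Birula projection onto the fixed locus $\bigsqcup_\lambda F_\lambda\subsetneq C_n$. It is not continuous, and it is not a retraction onto the core. For $H_{2n}(C_n)\cong H_{2n}(M_n)$ you should instead cite the standard deformation-retraction result, as the paper does with \cite[Prop.~3.14]{RZ1}. Alternatively, derive it from the proper equivariant map $h_n$ to the contracting cone $\Sym^n\C$.
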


\begin{proof}
The downward Białynicki--Birula pieces are contained in the attracting core of
the semiprojective action.  Their closures are irreducible, since each
$W^-_{F_\lambda}$ is an affine bundle over the irreducible fixed component
$F_\lambda$.  These closures cover $C_n$.

Let
\[
        \lambda=1^{\alpha_1}2^{\alpha_2}\cdots .
\]
Then
\[
        F_\lambda\simeq\prod_{r\ge1}\Sym^{\alpha_r}\Sigma,
        \qquad
        \dim F_\lambda=\sum_{r\ge1}\alpha_r=\ell(\lambda).
\]
We claim that the fiber of
\[
        W^-_{F_\lambda}\longrightarrow F_\lambda
\]
has dimension
\[
        n-\ell(\lambda)=\sum_{r\ge1}\alpha_r(r-1).
\]
Indeed, over a point of the open locus where the support points are pairwise
distinct, the Hilbert scheme is locally the product of the corresponding
punctual Hilbert schemes.  A point of $F_\lambda$ consists, for each
part-size $r$, of $\alpha_r$ punctual fixed subschemes of length $r$, each
supported at a distinct point $(x,0)$ on the corresponding vertical curve
$\{x\}\times\C$.  The downward
piece in each punctual length-$r$ factor is the punctual Hilbert scheme
$\Hilb^r_0(\A^2)$, whose dimension is $r-1$ by Briançon's theorem.  Hence the
downward fiber dimension is the sum of the punctual dimensions,
\[
        \sum_{r\ge1}\alpha_r(r-1)=n-\ell(\lambda).
\]
Therefore
\[
        \dim W^-_{F_\lambda}
        =\dim F_\lambda+n-\ell(\lambda)
        =n.
\]
Thus all $C_\lambda=\overline{W^-_{F_\lambda}}$ have dimension $n$.  Since
they cover the
core 
$C_n=h_n^{-1}(0)$, 
they are
precisely its irreducible components.

Since $C_n$ is projective, its ordinary and Borel--Moore homology agree in
top degree.  The top homology of a pure-dimensional complex variety is freely
generated by the fundamental classes of its irreducible components, giving the
basis $\{[C_\lambda]\}$.  Finally, in much higher generality, the core is a
deformation retract of the total space; see \cite[Prop.~3.14]{RZ1}. 
Thus $H_{2n}(C_n)\cong H_{2n}(M_n)$.
\end{proof}

For a point \(p\in F_\lambda\), let \(W_p^+\) denote the
upward $\C^*$-flow subvariety through \(p\):
\[
        W_p^+
        =
        \{q\in M_n\mid \lim_{t\to0}t\cdot q=p\}.
\]

\begin{de}[Generic locus in \(F_\lambda\)]
\label[definition]{prop:generic-very-stable}

Let
       $ F_\lambda^\circ\subset F_\lambda$
be the open locus consisting of tuples
\[
        (D_i)_{i\ge1}\in \prod_{i\ge1}\Sym^{\alpha_i}\Sigma
\]
such that each \(D_i\) is reduced and the supports of the divisors \(D_i\)
are pairwise disjoint. 

\end{de}

The following result identifies this locus with the \textit{very-stable locus} of $F_\lambda$, i.e. the points $p$ such that $W_p^+$ is closed. 
The implication "$\Rightarrow$" generalizes the argument of 
\cite[Prop.~4.13]{MZ}; the converse is a local shear argument.

\begin{prop}[Very-stable locus in \(F_\lambda\)]\label[prop]{prop:very-stable-locus}
Let \(p\in F_\lambda\).  Then
\[
        p\in F_\lambda^\circ
        \ \Longleftrightarrow
        \ W_p^+\cap C_n=\{p\} \ \Longleftrightarrow \ W_p^+ \text{ is closed}.
\]
\end{prop}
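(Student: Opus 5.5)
I will prove the cycle of implications (i) $p\in F_\lambda^\circ \Rightarrow$ (ii) $W_p^+\cap C_n=\{p\}$ $\Rightarrow$ (iii) $W_p^+$ closed $\Rightarrow$ (i).

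\emph{(ii) $\Rightarrow$ (iii).} This is the standard semiprojective argument. Let $q\in\overline{W_p^+}\setminus W_p^+$. Since the action is contracting, $q_0:=\lim_{t\to0}t\cdot q$ exists and lies in some fixed component. Because $\overline{W_p^+}$ is closed and $\C^*$-stable, $q_0\in\overline{W_p^+}$. I will argue that $q_0\neq p$. If $q_0=p$, then $q\in W_p^+$, a contradiction. Next I show that $\overline{W_p^+}\cap C_n$ contains $q_0\neq p$. Properness of $h_n$ together with equivariance lets me consider the $\C^*$-stable closed subvariety $\overline{W_p^+}$. The map $h_n|_{W_p^+}$ is $\C^*$-equivariant with $W_p^+$ an affine space (Białynicki-Birula) on which $\C^*$ acts with positive weights, so it is finite onto its image.

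The cleaner route is to use the standard fact for semiprojective varieties: $W_p^+$ is closed iff it meets the core only at $p$. Concretely, if $\overline{W_p^+}\ne W_p^+$, take a $\C^*$-stable boundary curve and flow it to $t\to\infty$ inside the closure. Via properness of $h_n$ and weight positivity, this produces a point of $(\overline{W_p^+}\setminus\{p\})\cap h_n^{-1}(0)$. I then need to lift this back to $W_p^+$ itself. I expect to phrase this through the Hausel--Hitchin very-stable criterion: $W_p^+\cap C_n=\{p\}$ iff $h_n|_{W_p^+}$ is proper, which forces closedness. I will cite that criterion rather than reprove it.

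\emph{(i) $\Rightarrow$ (ii).} Let $p\in F_\lambda^\circ$ have pairwise distinct support points $x_1,\dots,x_\ell\in\Sigma$, with multiplicities $\lambda_j$. Near $p$, the Hilbert scheme factors étale-locally as $\prod_j \Hilb^{\lambda_j}(U_j\times\C)$, with $U_j$ a small disc around $x_j$. The fixed point in each factor is $\mathcal O/(w, z^{\lambda_j})$ with $w$ a local coordinate on $U_j$. As in \cite[Prop.~4.13]{MZ}, a point flowing up to this fixed point is, in each factor, a subscheme $\{w=f(z)\}\cap\{g(z)=0\}$ with $g$ monic of degree $\lambda_j$ and $\deg f<\lambda_j$. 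The weight-positivity of the upward flow forces the $w$-coordinate to be a function of $z$, which is exactly the curvilinear form transverse to $\Sigma$. Such a subscheme lies in $h_n^{-1}(0)$ iff $g=z^{\lambda_j}$ for every $j$, and in that case the $\C^*$-limit $t\to\infty$ exists only if $f$ is constant equal to $x_j$. This gives $W_p^+\cap C_n=\{p\}$.

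\emph{(iii) $\Rightarrow$ (i).} This is the shear argument. Suppose $p\notin F_\lambda^\circ$, so two packets share a support point $x$. The cases are two packets of the same size sharing a point, or two packets of different sizes at the same $x$. In either case, I construct a one-parameter family of subschemes in $W_p^+$ of the form $\{w=s\,z^{a}\}$ applied to one packet. The shear is chosen so that its $t\to0$ limit is $p$, while its limit as $s\to\infty$ (after rescaling) leaves $W_p^+$ and lands in a different BB cell, a point of $C_n$ different from $p$. This exhibits non-closedness, and also a core point other than $p$. Checking that this shear indeed stays in $W_p^+$, meaning it has the correct weights, is where I expect the main obstacle to lie. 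I would first work out the case $\lambda=(1,1)$ in $\Hilb^2$, then extend via the local product decomposition.
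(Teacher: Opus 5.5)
Your proposal has a genuine gap at the decisive step (iii)$\Rightarrow$(i): the family is not well defined at a collision point, and the curvilinear tilt you suggest cannot work in general. What you need, as in the paper, is a point $q\in W_p^+\cap C_n$ with $q\neq p$ for every boundary point $p$. At a point $x\in\Sigma$ where packets collide, $p$ is not locally a union of packets. Write $p=(D_i)_{i\ge1}$ and let $w$ be a coordinate on $\Sigma$ centered at $x$. Then the local ideal is the $z$-homogeneous monomial ideal $I=(w^{a_0},w^{a_1}z,\dots,w^{a_{m-1}}z^{m-1},z^m)$, $a_0\ge\cdots\ge a_{m-1}\ge1$, where $a_k$ is the multiplicity at $x$ of $\sum_{i>k}D_i$. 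One has $p\notin F_\lambda^\circ$ exactly when $a_0\ge2$ at some $x$, so ``apply $\{w=sz^a\}$ to one packet'' has no meaning there. If you read it as a curvilinear tilt, one computes that $\{w=f(z),\,z^N=0\}$ with $b=\min(\operatorname{ord}_0 f,N)\ge1$ flows as $t\to0$ to $(z^b,\,w^{\lceil (N-j)/b\rceil}z^j\ (0\le j<b))$, while $\{z=g(w),\,w^N=0\}$ flows to $(z,w^N)$. For $p=(2x,x)\in F_{(2,1,1)}$ the local ideal is $(w^3,wz,z^2)$, which is none of these. So no curvilinear subscheme at $(x,0)$ flows to $p$, and a non-curvilinear deformation such as $(w^3,wz,z^2-cw^2)$ is needed. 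Your tilt does handle $(x,x)\in F_{(2,1)}$, since $(w-sz^2,z^3)\to\mathfrak m^2$. That is exactly a case the paper's linear shear $z\mapsto z+au$ misses: every automorphism fixing the point preserves $\mathfrak m^2$, so there $q_a=p$. Neither shear alone covers all boundary points.

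A construction that works uniformly is to replace the generator $z^m$ of $I$ by $z^m-c\,w^{a_0-1}$, $c\neq0$, and call the resulting ideal $J_c$. For a monomial order in which $z^m$ leads (lex with $z>w$), the only nonzero $S$-polynomials are $-c\,w^{a_j+a_0-1}\in(w^{a_0})$, using $a_j\ge1$. So the generators form a Gr\"obner basis with leading-term ideal $I$, and $J_c$ still has colength $N$. Since $a_0-1\ge1$, $J_c$ is still supported at $(x,0)$, so the modified point $q$ lies in $C_n$. Finally $t\cdot J_c=J_{ct^m}\to J_0=I$ as $t\to0$, so $q\in W_p^+\setminus\{p\}$. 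Then $\lim_{t\to\infty}t\cdot q$ is a fixed point in $\overline{W_p^+}$ different from $p$ (its ideal contains $w^{a_0-1}$), hence not in $W_p^+$; this gives non-closedness directly.

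For (ii)$\Rightarrow$(iii), citing the very-stability criterion, as the paper does via [MZ, Prop.~2.20], is fine. Drop your preliminary claim that $h_n|_{W_p^+}$ is automatically finite: for local ideal $(w^2,z)$ its fibers are lines.

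Finally, in (i)$\Rightarrow$(ii) you constrain $f$ with the wrong limit. For $\{w=f(z),\,z^j=0\}$ the limit $t\to\infty$ always exists (it is $\{w=f(0),\,z^j=0\}$), so lying in $C_n$ does not force $f$ to be constant. What forces $f\equiv0$ is the defining condition $\lim_{t\to0}t\cdot q=p$, read in the $\C^*$-stable curvilinear chart (which therefore contains $W_p^+$): the coefficients of $f(z/t)$ must converge. This gives $W_p^+\cong\prod_j\Sym^{\lambda_j}\C$, and (ii) follows immediately.
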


\begin{proof}
Assume first that \(p\in F_\lambda^\circ\).  Since the support points of
\(p\) are pairwise distinct, the Hilbert scheme is, étale locally near
\(p\), the product of the corresponding punctual Hilbert schemes.  Under
this product description, the upward flow \(W_p^+\) is the product of the
upward flows of the vertical punctual ideals.  The proof is therefore
identical to the proof for \(\Hilb^n(E\times\C)\) given in
\cite[Prop.~4.13]{MZ}: the single-support case is treated there by the
vertical embedding, and the general distinct-support case follows from the
étale local product decomposition.  The same argument applies here because it
only uses the local product structure of the Hilbert scheme near distinct
support points and the vertical curve
\(\{x\}\times\C\subset\Sigma\times\C\).  Hence
\[
        W_p^+\cap C_n=\{p\}.
\]
By the very-stability criterion \cite[Prop.~2.20]{MZ}, this is equivalent to
closedness of \(W_p^+\).

Conversely, suppose \(p\notin F_\lambda^\circ\).  Then at least two support
points in the product presentation
\[
        F_\lambda\simeq \prod_{i\ge1}\Sym^{\alpha_i}\Sigma
\]
collide.  Choose local coordinates \(u\) on \(\Sigma\) and \(z\) on \(\C\)
near the corresponding point of \(\Sigma\times\C\), so that the
\(\C^*\)-action is
\[
        t\cdot(u,z)=(u,tz).
\]
Let
\[
        A_p=\C\{u,z\}/I_p
\]
be the local algebra of \(p\) at this support point.  Since \(p\) lies on
the boundary of \(F_\lambda^\circ\), the horizontal coordinate \(u\) is
nonzero in \(A_p\).  Indeed, if the class of \(u\) vanished in every local
factor, then the local pieces would be purely vertical and supported at
pairwise distinct points, which is precisely the locus \(F_\lambda^\circ\).

For \(a\ne0\), apply the local shear
\[
        z\longmapsto z+a u
\]
to obtain a nearby Hilbert-scheme point \(q_a\).  Since both \(u\) and \(z\)
are nilpotent in the local algebra of \(p\), the vertical coordinate remains
nilpotent in the local algebra of \(q_a\).  Hence \(q_a\in C_n=h_n^{-1}(0)\).
Moreover \(q_a\ne p\), because \(u\ne0\) in \(A_p\).

Under the \(\C^*\)-action, the local equation \(z+a u\) becomes
\(z+t a u\).  Therefore
\[
        \lim_{t\to0} t\cdot q_a=p.
\]
Thus \(q_a\in W_p^+\cap C_n\) and \(q_a\ne p\).  Hence
\[
        W_p^+\cap C_n\ne\{p\},
\]
so \(p\) is not very stable.  This proves the converse.
\end{proof}

Thus given $p\in F_\lambda^\circ$, $W_p^+$ is a closed submanifold of $M_n$, hence its Borel--Moore homology class 
$$[W_p^+]\in H^{\BM}_{2n}(M_n)$$ is well-defined. 

\begin{remark}[The elliptic case]\label[remark]{rem:elliptic-symplectic}
If \(\Sigma=E\) is an elliptic curve, then
\[
        E\times\C\simeq T^*E
\]
is holomorphic-symplectic with its standard cotangent symplectic structure.  
Hence
\[
        M_n(E)=\Hilb^n(T^*E)
\]
is holomorphic symplectic too, and the very stable 
upward-flow subvarieties \(W_p^+\)
are holomorphic Lagrangians. These were considered in \cite{MZ} as the 
branes on the one side of mirror symmetry for Hilbert schemes of symplectic surfaces, out of which $T^*E$ is the particular case.

\end{remark}

\begin{lemma}
For any choices of $p\in F_\lambda^\circ$,
the classes $\{[W_p^+]\}_{\lambda\vdash n}$ are dual to the 
$\{[C_\mu]\}_{\mu\vdash n}$
i.e. 
$$[W_\lambda^+]\cdot [C_\mu]=\delta_{\lambda\mu}.$$
\end{lemma}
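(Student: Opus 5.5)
We reduce the statement to two geometric facts: a support statement, that $W_p^+\cap C_\mu=\emptyset$ unless $\mu=\lambda$, and a transversality statement, that $W_p^+$ meets $C_\lambda$ transversally in the single point $p$. The pairing between $H^{\BM}_{2n}(M_n)$ and $H_{2n}(C_n)$ is well defined because $C_n$ is compact. Since $M_n$ is smooth of dimension $2n$ and both cycles have dimension $n$, the intersection number of $[W_p^+]$ with $[C_\mu]$ is computed by the local intersection multiplicities at the points of $W_p^+\cap C_\mu$. The strategy is to localize this intersection at the fixed point $p$ using \cref{prop:very-stable-locus}.

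First, by \cref{prop:very-stable-locus}, for $p\in F_\lambda^\circ$ we have $W_p^+\cap C_n=\{p\}$. Since every $C_\mu\subset C_n$, it follows that $W_p^+\cap C_\mu\subseteq\{p\}$. We must decide for which $\mu$ the point $p$ lies on $C_\mu$.

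Second, we show that $p\notin C_\mu$ for $\mu\neq\lambda$. In the Białynicki-Birula decomposition of the core, the downward flow through $p$ is $W_p^-$, the fiber of $W^-_{F_\lambda}\to F_\lambda$ over $p$. Near $p$, the core is contained in the union of closures of downward pieces $W^-_{F_\mu}$. Since $p$ is a generic point of $F_\lambda$, we argue that $p$ lies only in the component $C_\lambda$. The argument uses the étale-local product structure at $p$ coming from distinct supports, which was already used in \cref{prop:very-stable-locus}. In this product, the core is the product over support points of punctual Hilbert schemes $\Hilb^r_0$, together with the horizontal movement of the supports along $\Sigma$. Each punctual Hilbert scheme $\Hilb^r_0(\A^2)$ is irreducible by Briançon's theorem. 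Hence, near $p$, the core is locally irreducible, equal to $C_\lambda$, and smooth at $p$. Consequently $W_p^+\cap C_\mu=\emptyset$ for $\mu\ne\lambda$, and the pairing vanishes.

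Third, we verify transversality at $p$ when $\mu=\lambda$. The tangent space splits into $\C^*$-weight spaces:
\[
T_pM_n=T_pF_\lambda\oplus T_p^+\oplus T_p^-.
\]
Here $T_pW_p^+=T_p^+$ and $T_pC_\lambda\supseteq T_pF_\lambda\oplus T_p^-$. By the dimension count in \cref{lem:core-components}, $\dim C_\lambda=n$ and $\dim T_pF_\lambda+\dim T_p^-=\ell(\lambda)+(n-\ell(\lambda))=n$. Therefore, if $C_\lambda$ is smooth at $p$, which we established locally in the second step, we get
\[
T_pC_\lambda=T_pF_\lambda\oplus T_p^-,
\]
and this is complementary to $T_p^+$. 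The intersection is thus transverse at a single point. With complex orientations the local intersection number is $+1$, which gives $[W_\lambda^+]\cdot[C_\lambda]=1$.

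The main obstacle is the second step: showing that $p\in F_\lambda^\circ$ lies on no other core component and that $C_\lambda$ is smooth at $p$. Our first approach reduces, via the local product decomposition, to the statement that for a single vertical punctual point of length $r$, the core of $\Hilb^r(\C^2)$ relative to the weight-$(0,1)$ action is smooth near the curvilinear vertical ideal $(u,z^r)$. This holds because that ideal lies in the open curvilinear locus, so the punctual Hilbert scheme is smooth there. If this route fails, we use a fallback for the transversality count: as in the proof of \cref{prop:very-stable-locus}, we can compute directly with the Poincaré duality $H^{\BM}_{2n}(M_n)\cong H^{2n}(M_n)$ together with the fact that the pairing matrix is unitriangular in the BB order. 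That argument only requires the diagonal entries to equal $1$, and these come from the transversality computation in the third step.
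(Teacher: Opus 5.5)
Your proof is correct and follows the paper's three-step outline. First, very stability gives $W_p^+\cap C_\mu\subseteq\{p\}$. Second, $p$ lies on no $C_\mu$ with $\mu\neq\lambda$. Third, the intersection is transverse at $p$ by the weight splitting $T_pM_n=T_pF_\lambda\oplus T_p^+\oplus T_p^-$.

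You differ from the paper in the second step. The paper excludes the other components dynamically, from very stability: if $p\in C_\mu$ with $\mu\neq\lambda$, degenerating $\C^*$-orbits of $W^-_{F_\mu}$ toward $p$ produce a nontrivial orbit in $C_n$ whose $t\to0$ limit is $p$. That contradicts $W_p^+\cap C_n=\{p\}$. The paper states this loosely, as $p$ being a flow limit of a point of $W^-_{F_\mu}$; a rigorous version needs a broken-orbit argument in the projective core. You instead compute the germ of the core at $p$. Around each vertical curvilinear ideal $(u,z^r)$ there is the standard chart $\{(u-f(z),g(z))\}\cong\A^{2r}$. In it the core is $\{g=z^r\}\cong\A^r$, so $C_n$ is smooth of dimension $n$ at $p$, and only one component can pass through $p$.

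Your route is concrete and also yields smoothness of $C_\lambda$ at $p$, which your third step uses. The paper's route is formal and works for very stable points of any semiprojective variety. Smoothness of $C_\lambda$ at $p$ also holds formally: $W^-_{F_\lambda}$ is locally closed, hence open in its closure $C_\lambda$, and it is a smooth affine bundle at $p$.

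Two points about the write-up. The inference from Brian\c{c}on's theorem (each $\Hilb^r_0(\A^2)$ is irreducible, hence the core is locally irreducible and smooth at $p$) is a non sequitur. Global irreducibility gives neither smoothness nor unibranchness at a given point. What carries the step is your last paragraph: $(u,z^r)$ is curvilinear, and the punctual Hilbert scheme is smooth there. State that in place of the Brian\c{c}on sentence.

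Also drop the \emph{fallback}. Unitriangularity of the pairing matrix cannot give the vanishing of off-diagonal entries that $\delta_{\lambda\mu}$ requires. As stated, it also relies on the third-step transversality computation it is meant to replace.
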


\begin{proof}

By
\cref{prop:very-stable-locus},
\[
        W_p^+\cap C_n=\{p\}.
\]
Since $p\in F_\lambda\subset C_\lambda$, and 
$p$ is a very stable point of
$F_\lambda$, it does not lie on any other component $C_\mu$.  
Indeed, otherwise it would be a limit 
$\lim_{t\fun 0} t\cdot q =p$ of $q\in W_{F_\mu}^-$, i.e. 
$q\in W_p^+$ which contradicts the equivalent 
 very stability condition $W_p^+ \cap C_n=\{p\}$.

At the point $p$, the upward and downward Białynicki--Birula directions are
opposite weight spaces for the $\C^*$-action, hence meet transversely.  With
complex orientations, the local intersection multiplicity is $1$.  Therefore
\[
        [W_\lambda^+]\cdot [C_\mu]=\delta_{\lambda\mu}. \qedhere
\]
\end{proof}

As by \cref{lem:core-components} the $[C_\mu]$ form the basis of $H_{2n}(M_n)$, the previous lemma makes
the following
well-defined: 

\begin{de}
For arbitrary point $p\in F_\lambda^{\circ}$, denote 
    $$[W_\lambda^+]:=[W_p^+]\in H^{\BM}_{2n}(M_n)$$
Since \(M_n\) is
smooth, Poincaré duality identifies
       $ H^{\BM}_{2n}(M_n)\overset{\operatorname{PD}}{\cong} H^{2n}(M_n),$
and we denote
\[
        U_\lambda:=\PD[W_\lambda^+]\in H^{2n}(M_n)
\]
for the corresponding cohomology class.
\end{de}

\begin{cor}\label[corollary]{lem:upward-flow-basis}
     $\{U_\lambda \mid \lambda\vdash n\}$ form a basis of $H^{2n}(M_n).$
\end{cor}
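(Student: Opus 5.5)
The statement follows from the duality lemma just proved together with \cref{lem:core-components}, so the plan is pure linear algebra once the pairing is set up correctly.

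First, I would fix the intersection pairing
\[
        H^{\BM}_{2n}(M_n)\times H_{2n}(M_n)\longrightarrow \Z ,
\]
defined by capping with Poincaré duals. Because $M_n$ is smooth of complex dimension $2n$, Poincaré duality identifies $H^{\BM}_{2n}(M_n)\cong H^{2n}(M_n)$, and under this identification the pairing becomes the evaluation of cohomology on homology. Over $\Q$, evaluation induces a perfect pairing $H^{2n}(M_n)\otimes H_{2n}(M_n)\to\Q$ by the universal coefficient theorem. By \cref{lem:core-components}, the classes $[C_\mu]$ for $\mu\vdash n$ form a basis of $H_{2n}(M_n)\cong H_{2n}(C_n)$. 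Hence $\dim H^{2n}(M_n)$ equals the number $p(n)$ of partitions of $n$.

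Next, the duality lemma gives $\langle U_\lambda,[C_\mu]\rangle=[W_\lambda^+]\cdot[C_\mu]=\delta_{\lambda\mu}$. A vanishing combination $\sum_\lambda c_\lambda U_\lambda=0$, paired against $[C_\mu]$, forces $c_\mu=0$. So the $p(n)$ classes $U_\lambda$ are linearly independent in a space of dimension $p(n)$, and therefore form a basis. In fact they are exactly the dual basis to $\{[C_\mu]\}$.

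The only point needing care, and the main obstacle, is compatibility of the geometric intersection number $[W_\lambda^+]\cdot[C_\mu]$ with the algebraic pairing. The cycle $W_\lambda^+$ is closed but not compact, and $C_\mu$ is compact. The pairing $H^{\BM}_{2n}\times H_{2n}\to H_0=\Z$ is well defined because the intersection $W_p^+\cap C_\mu$ is compact: it is $\{p\}$ or empty. By the standard comparison for transverse intersections, this pairing is computed by local multiplicities, which is what the lemma establishes.

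If one wants an integral statement, the same argument works with $\Z$ coefficients, provided $H^{2n}(M_n;\Z)$ is torsion-free so that evaluation against $H_{2n}$ is perfect. I would justify torsion-freeness via the Białynicki--Birula decomposition. Since the claim is stated in $H^{2n}(M_n)=H^{2n}(M_n;\Q)$, the rational argument suffices.
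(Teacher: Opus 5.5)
Your proposal is correct and follows the paper's route. The paper deduces the corollary directly from \cref{lem:core-components}, which says the $[C_\mu]$ form a basis of $H_{2n}(M_n)\cong H_{2n}(C_n)$, together with the duality $[W_\lambda^+]\cdot[C_\mu]=\delta_{\lambda\mu}$, so the $U_\lambda$ are the dual basis. One small precision: the pairing of $H^{\BM}_{2n}(M_n)$ with $H_{2n}(M_n)$ is defined for all classes because $C_\mu$ is compact, and the compactness of $W_p^+\cap C_\mu$ is what lets you compute it by local multiplicities at $p$, rather than being needed for the pairing to exist.
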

\section{Perverse filtrations and the flag description}

Let
\[
        f:X\to Y
\]
be a proper morphism of complex algebraic varieties, with \(X\) smooth of
 dimension \(d_X\) and \(Y\) quasi-projective.  We use the shifted convention
for the perverse Leray filtration, normalized by the defect of semismallness
\[
        r(f):=\dim(X\times_Y X)-\dim X .
\]
Thus the filtration is defined using the shifted complex
\(Rf_*\Q_X[d_X]\):
\[
        P_kH^j(X)
        :=
        \operatorname{Im}\left(
        \mathbb H^{j-d_X}
        \left(Y,{}^p\tau_{\le k-r(f)}Rf_*\Q_X[d_X]\right)
        \longrightarrow
        \mathbb H^{j-d_X}
        \left(Y,Rf_*\Q_X[d_X]\right)=H^j(X)
        \right).
\]
With this normalization, the
perverse filtration is concentrated in the interval \([0,2r(f)]\).  If \(f\) is
a fibration with equidimensional fibers and \(d_Y=\dim Y\), then
\[
        r(f)=d_X-d_Y,
\]
and the same definition can be written in the simpler form
\[
        P_kH^j(X)
        =
        \operatorname{Im}\left(
        \mathbb H^j
        \left(Y,{}^p\tau_{\le k+d_Y}Rf_*\Q_X\right)
        \longrightarrow
        H^j(X)
        \right).
\]

We will use the following theorem of de Cataldo--Migliorini (dCM)

\begin{theorem}[dCM flag description {\cite{dCM}}]
\label[theorem]{thm:dcm-flag}
Let
\[
        f:X\to Y
\]
be a proper morphism of complex algebraic varieties, with \(X\) smooth of
dimension \(d_X\) and \(Y\) affine.  Choose an embedding
\[
        Y\hookrightarrow \mathbb A^N
\]
and a sufficiently general affine flag
\[
        \emptyset =\Lambda^{-1}\subset \Lambda^0\subset \Lambda^1\subset\cdots\subset \Lambda^N=\mathbb A^N,
        \qquad \dim \Lambda^i=i.
\]
Set
\[
        Y_i=Y\cap \Lambda^i,
        \qquad
        X_i=f^{-1}(Y_i).
\]
Then, the perverse filtration is computed via restriction maps:
\[
        P_kH^j(X)
        =
        \ker\left(
        H^j(X)\longrightarrow H^j(X_{j-k-1})
        \right).
\]
\end{theorem}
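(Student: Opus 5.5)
The statement is quoted from \cite{dCM}, so I would follow their strategy and rebuild it in the normalization fixed above. Everything takes place on \(Y\) with the complex \(K:=Rf_*\Q_X[d_X]\), using \(H^j(X)=\mathbb H^{j-d_X}(Y,K)\), and similarly for \(X_i=f^{-1}(Y_i)\) via proper base change. The perverse filtration is the image filtration coming from the truncations \({}^p\tau_{\le \bullet}K\). The flag side is the kernel filtration of restriction to the \(Y_i\). The goal is to identify both as the abutment filtration of spectral sequences with the same \(E_1\)-terms, after a shift that I will track carefully.

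The first step is a genericity statement. For a sufficiently general affine subspace \(\Lambda\) of codimension \(c\), the inclusion \(\iota:Y\cap\Lambda\hookrightarrow Y\) should be transverse to a Whitney stratification adapted to all the perverse cohomology sheaves \({}^p\mathcal H^a(K)\). Then \(\iota^*[-c]\) is t-exact for the perverse t-structure. Consequently, for a general flag, restriction to each \(Y_i\) commutes with \({}^p\tau_{\le\bullet}\) up to the shift by \(N-i\) (more precisely, by the codimension of \(Y_i\) in \(Y\)).

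The second step is a vanishing statement for a single perverse sheaf \(P\) on \(Y\). Write \(U_i:=Y_i\setminus Y_{i-1}\), which is affine because it is the complement of a hyperplane section of an affine variety. Using Artin vanishing for \(j_!\) and \(j_*\) extensions from \(U_i\), together with the t-exactness from the first step, I would show that the relative hypercohomology \(\mathbb H^m(Y_i,Y_{i-1};P|_{Y_i})\) is concentrated in a single degree determined by \(i\). This is the basic lemma in dCM's proof. It says that the flag filtration on \(\mathbb H^*(Y,P)\) is trivial in the appropriate sense: the whole group sits in one step, with a jump exactly at the degree predicted by perversity.

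The third step handles a general \(K\). I would compare the spectral sequence of the filtered object \(\{Y_i\}\) with the perverse spectral sequence \(E_2^{a,b}=\mathbb H^a(Y,{}^p\mathcal H^b K)\). Using the second step for each \({}^p\mathcal H^bK\), and the compatibility of restriction with truncation from the first step, one shows by induction on the perverse amplitude that
\[
\ker\bigl(\mathbb H^{m}(Y,K)\to\mathbb H^{m}(Y_{i},K)\bigr)=\operatorname{Im}\bigl(\mathbb H^{m}(Y,{}^p\tau_{\le b}K)\to \mathbb H^m(Y,K)\bigr)
\]
for the matching pair \((i,b)\). Translating back through \(m=j-d_X\) and the shift by \(r(f)\) gives the index \(j-k-1\).

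I expect the main obstacle to be the second step. One needs the correct concentration degree for the relative cohomology and a genericity condition that holds simultaneously for every member of the flag. Bookkeeping the normalization \(r(f)\) against the codimension shifts is error-prone, so I would first check the indices on the case where \(f\) is the identity and \(K=\Q_Y[d_Y]\).
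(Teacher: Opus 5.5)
Your outline follows the de Cataldo--Migliorini argument; the paper gives no proof and only cites \cite{dCM}. The gap is in the last sentence of your third step: the bookkeeping does not produce the index $j-k-1$ in the generality stated. Track the codimension $N-i$ of $Y_i$ that you introduce in your first step. Your two ingredients are Artin vanishing on a general section $Y\cap\Lambda$ of codimension $c$ after the t-exact shift $[-c]$, and Lefschetz injectivity for objects of ${}^pD^{\ge b+1}$. For any bounded constructible $K$ on $Y$ they give
\[
\operatorname{Im}\bigl(\mathbb H^m(Y,{}^p\tau_{\le b}K)\to\mathbb H^m(Y,K)\bigr)=\ker\bigl(\mathbb H^m(Y,K)\to\mathbb H^m(Y\cap\Lambda,K)\bigr),\qquad \operatorname{codim}_{\mathbb A^N}\Lambda=b-m+1,
\]
reading $\Lambda=\mathbb A^N$ if $b-m+1\le 0$. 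Substituting $K=Rf_*\Q_X[d_X]$, $m=j-d_X$ and $b=k-r(f)$ gives $\Lambda=\Lambda^i$ with $i=j-k-1+\bigl(N+r(f)-d_X\bigr)$. This equals $j-k-1$ only if $N=d_X-r(f)$, for instance when $f$ has equidimensional fibres over $Y=\mathbb A^{d_Y}$.

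The sanity check you propose detects the problem. Take $f=\mathrm{id}$ on a line $Y\subset\mathbb A^2$. Then $r(f)=0$ and $d_X=1$, so $P_{-1}H^0(Y)=0$. But a general point $\Lambda^0$ misses $Y$, so $\ker\bigl(H^0(Y)\to H^0(Y_0)\bigr)=H^0(Y)$. Hence the statement, with $\dim\Lambda^i=i$ in $\mathbb A^N$ and the $r(f)$-normalization, is false in general, and your argument can only prove the corrected index. In the paper's application, $h_n:M_n\to\Sym^n\C\cong\mathbb A^n$ with $N=n$, $d_X=2n$ and $r(h_n)=n$. There the correction term vanishes, so Corollary~\ref{cor:indexing} is unaffected.

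A smaller point concerns your second step. Artin vanishing on the affine varieties $Y_i$ and $U_i$ only kills $\mathbb H^m(Y_i,Y_{i-1};P|_{Y_i})$ on one side of the critical degree. This holds whether you use $j_!j^*P$, or dually $j_*j^*DP$ with compact supports. The other side is the Lefschetz hyperplane theorem for perverse sheaves, and it needs genericity of $\Lambda^{i-1}$ with respect to the boundary at infinity. One way to get it: after Verdier duality, pass to a projective closure $\bar Y_i$ in which $\bar\Lambda^{i-1}$ is transverse to the boundary. Then extension by zero across the boundary commutes with $j_*$ across $\Lambda^{i-1}$, and Artin vanishing applies on the other affine variety $\bar Y_i\setminus\bar\Lambda^{i-1}$. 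Stratified Morse theory is an alternative.

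Once both vanishings are available, your third step needs no spectral-sequence comparison. In the triangle ${}^p\tau_{\le b}K\to K\to{}^p\tau_{>b}K$, the first term has vanishing $\mathbb H^m$ on the relevant section, and the third injects into its restriction. A diagram chase then gives kernel $=$ image directly.
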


\begin{definition}[Collision stratification of \(\Sym^n\C\)]
\label[definition]{def:collision-stratification}
For a partition \(\mu=(\mu_1,\ldots,\mu_s)\vdash n\), let
\[
        S_\mu\subset \Sym^n\C
\]
be the locus of effective divisors on \(\C\) with multiplicity pattern
\(\mu\):
\[
        S_\mu
        =
        \left\{
        \mu_1x_1+\cdots+\mu_sx_s
        \;\middle|\;
        x_i\in\C,\ x_i\neq x_j\text{ for }i\neq j
        \right\}.
\]
The order of the parts of \(\mu\) is irrelevant.  These locally closed smooth
strata give the \textit{collision stratification}
\[
        \Sym^n\C=\bigsqcup_{\mu\vdash n}S_\mu .
\]
Moreover
\[
        \dim S_\mu=\ell(\mu),
\]
and, for every \(r\), the union
\[
        \Sym^n_{\le r}\C:=\bigcup_{\ell(\mu)\le r}S_\mu
\]
is the closed locus of divisors whose support has cardinality at most \(r\).
In particular, \(S_{1^n}\) is the open configuration locus, and its complement
is the big diagonal.
\end{definition}

\begin{proposition}[Collision-general affine flags]
\label[proposition]{prop:collision-transverse-flags}
There is a nonempty Zariski open subset of the affine flag variety
parametrizing flags
\[
        \Lambda^0\subset\Lambda^1\subset\cdots\subset\Lambda^n=\Sym^n\C,
        \qquad \dim\Lambda^i=i,
\]
such that, for every partition \(\mu\vdash n\) and every \(i\), the
intersection \(\Lambda^i\cap S_\mu\) has the expected behavior:
\[
        \Lambda^i\cap S_\mu=\emptyset
        \qquad\text{if}\qquad
        i+\ell(\mu)<n,
\]
while, if \(i+\ell(\mu)\ge n\), then \(\Lambda^i\cap S_\mu\) is nonempty,
smooth, and of dimension
\[
        i+\ell(\mu)-n.
\]
In the second case the intersection is transverse, i.e. for every
\(x\in\Lambda^i\cap S_\mu\),
\[
        T_x\Lambda^i+T_xS_\mu=T_x\Sym^n\C .
\]
\end{proposition}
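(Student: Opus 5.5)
We will prove the statement by a standard Bertini/Kleiman transversality argument, applied stratum by stratum and dimension by dimension, and then intersect finitely many nonempty Zariski open subsets of the irreducible affine flag variety. Identify \(\Sym^n\C\cong\A^n\) via elementary symmetric functions (coefficients of the monic polynomial \(\prod(x-x_i)\)). Each \(S_\mu\) is then a smooth locally closed subvariety of dimension \(\ell(\mu)\), by \cref{def:collision-stratification}: it is the image of the open set of \(\C^{\ell(\mu)}\) with distinct coordinates under an injective immersion, modulo the finite group permuting equal parts. Since there are finitely many pairs \((i,\mu)\), it suffices to produce, for each fixed \(i\) and \(\mu\), a nonempty open set of flags for which \(\Lambda^i\cap S_\mu\) behaves as claimed.

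For fixed \(i\), the map from the flag variety to the affine Grassmannian \(\mathrm{AGr}(i,n)\) of \(i\)-dimensional affine subspaces, \((\Lambda^\bullet)\mapsto\Lambda^i\), is surjective with irreducible fibers. So the preimage of a nonempty open subset of \(\mathrm{AGr}(i,n)\) is nonempty open. On \(\mathrm{AGr}(i,n)\) we apply Kleiman's transversality theorem to the transitive action of the affine group \(\mathrm{Aff}(n)\) on \(\A^n\), with the smooth subvarieties \(S_\mu\) and a fixed \(\Lambda\). For general \(g\in\mathrm{Aff}(n)\), the intersection \(g\Lambda\cap S_\mu\) is either empty or smooth of dimension \(i+\ell(\mu)-n\), and the intersection is transverse, i.e.\ \(T_x\Lambda^i+T_xS_\mu=T_x\A^n\). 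In particular it is empty when \(i+\ell(\mu)<n\). This is characteristic zero, so generic smoothness applies; the transversal locus is constructible and contains a dense open subset, and its image in \(\mathrm{AGr}\) contains a dense open subset.

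The remaining point is nonemptiness when \(i+\ell(\mu)\ge n\), which Kleiman does not give; this is the step I expect to need care. First, the closure \(\overline{S_\mu}\) is an irreducible closed subvariety of \(\A^n\) of dimension \(\ell(\mu)\), and it is conical for the scaling action \(x_i\mapsto tx_i\). This action is a weighted \(\C^*\)-action on the coefficients with positive weights, so \(\overline{S_\mu}\) is closed in the weighted projective completion. A general affine \(i\)-plane then meets \(\overline{S_\mu}\) in dimension \(i+\ell(\mu)-n\ge0\): a dimension count in the closure \(\mathbb P^n\supset\A^n\) shows that the closure of \(\overline{S_\mu}\) in \(\mathbb P^n\) meets a general projective \(i\)-plane in a nonempty set. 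A general plane avoids the lower-dimensional boundary \(\partial=\overline{S_\mu}^{\mathbb P}\setminus\overline{S_\mu}\) at infinity in the sense that it meets \(\partial\) in strictly smaller dimension, because \(\dim\partial<\ell(\mu)\).

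Second, we must show that some intersection point lies in \(S_\mu\) itself rather than in the smaller strata \(\overline{S_\mu}\setminus S_\mu=\bigcup S_\nu\) with \(\nu\) a coarsening, \(\ell(\nu)<\ell(\mu)\). For a general plane these are met in dimension \(i+\ell(\nu)-n<i+\ell(\mu)-n\), or not at all. The intersection with \(\overline{S_\mu}\) has every component of dimension at least \(i+\ell(\mu)-n\) by Krull's principal ideal theorem, since the plane is cut out by \(n-i\) linear equations. Hence no component is contained in the smaller strata or at infinity, so \(\Lambda^i\cap S_\mu\neq\emptyset\).

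Finally, we intersect the finitely many open sets obtained for all \(i\le n\) and \(\mu\vdash n\) to get the required nonempty open subset of flags.
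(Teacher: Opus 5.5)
Your proof is correct, but it takes a different route from the paper's.

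\textbf{The paper's route.} The paper writes an affine $i$-plane as a fiber of a general affine linear projection $\pi:\A^n\to\A^{n-i}$ and reads everything off from $\pi|_{S_\mu}$:
\begin{itemize}
\item if $\ell(\mu)<n-i$, the image has dimension at most $\ell(\mu)$, so a general fiber misses $S_\mu$;
\item if $\ell(\mu)\ge n-i$, a general projection is dominant on $S_\mu$, and generic smoothness makes a general fiber nonempty and smooth of dimension $i+\ell(\mu)-n$;
\item smoothness of that fiber is the same as transversality.
\end{itemize}
So emptiness, nonemptiness and transversality all come out of one mechanism.

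\textbf{Your route.} You obtain emptiness and transversality from Kleiman's theorem for the transitive $\mathrm{Aff}(n)$-action. By design, that theorem only gives ``empty or transverse of the expected dimension.'' You correctly isolate nonemptiness as a separate step. You prove it via the projective dimension theorem in $\mathbb P^n$, plus general position with respect to the boundary at infinity and the lower-dimensional set $\overline{S_\mu}\setminus S_\mu$. This is longer than the paper's argument. However, it makes explicit exactly what the paper only asserts, namely that a general projection is dominant on $S_\mu$ (a Noether-normalization-type fact).

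\textbf{Two small points.}
\begin{itemize}
\item The remark that $\overline{S_\mu}$ is conical and ``closed in the weighted projective completion'' is unnecessary. As phrased it is also not meaningful, since a positive-dimensional affine variety is not closed in a completion. You never use it; the ordinary $\mathbb P^n$ count suffices.
\item The Krull lower bound $i+\ell(\mu)-n$ on component dimensions should be applied to the projective intersection $\overline{L}\cap\overline{S_\mu}^{\,\mathbb P}$, which is where nonemptiness holds. The affine components of $L\cap\overline{S_\mu}$ are then the parts of those components lying off the hyperplane at infinity.
\end{itemize}
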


\begin{proof}
The collision stratification has finitely many smooth strata.  Fix \(i\) and
\(\mu\), and set
\[
        c:=n-i,
        \qquad d:=\dim S_\mu=\ell(\mu).
\]
An affine \(i\)-plane in \(\A^n\) may be viewed as a fiber of an affine linear
projection
\[
        \pi:\A^n\longrightarrow\A^c.
\]
For a general choice of \(\pi\), the restriction
\[
        \pi|_{S_\mu}:S_\mu\longrightarrow\A^c
\]
has the expected behavior.  If \(d<c\), then its image has dimension at most
\(d\), so a general fiber of \(\pi\) is disjoint from \(S_\mu\).  If \(d\ge c\),
then a general projection is dominant on \(S_\mu\); by generic smoothness, a
general fiber of \(\pi|_{S_\mu}\) is nonempty and smooth of dimension
\[
        d-c=\ell(\mu)+i-n.
\]
This smoothness of the fiber is equivalent to the transversality of the
corresponding affine \(i\)-plane with \(S_\mu\).

Thus, for each pair \((i,\mu)\), the affine \(i\)-planes with the stated
incidence and transversality property form a nonempty Zariski open subset of
the affine Grassmannian.  Pulling these open subsets back to the affine flag
variety and intersecting over the finitely many pairs \((i,\mu)\) gives a
nonempty Zariski open subset of the affine flag variety.  Any flag in this
open subset has the required property for every \(i\) and every \(\mu\).
\end{proof}

\begin{definition}[Good flags]
\label[definition]{def:dcm-generality}
In de Cataldo--Migliorini, a "sufficiently general" flag is such that it is in a good position with respect to a
stratification adapted to the constructible complex under consideration; a general flag satisfies this condition \cite[Def.~5.2.4 and
Rmk~5.2.5]{dCM}.  
In the application to the map considered in this paper
\[
        h_n:M_n\to \Sym^n\C\simeq\mathbb A^n,
\]
we choose the flag inside this dCM-open subset and, in addition, satisfying
the incidence and transversality properties with respect to the collision
stratification recorded in \cref{prop:collision-transverse-flags}.  We will
call such an affine linear flag a \textbf{good flag}.
\end{definition}

Thus, for the map considered in this paper
\[
        h_n:M_n(\Sigma)\to\Sym^n\C,
\]
we have a corollary of \cref{thm:dcm-flag}:
\begin{cor} 
\label[cor]{cor:indexing}
Given a good flag
\[
        \Lambda^{-1}=\emptyset\subset\Lambda^0\subset\Lambda^1\subset\cdots
        \subset\Lambda^n=\Sym^n\C
\]
we have the following equality
\[
        P_kH^{2n}(M_n(\Sigma))
        =
        \ker\left(
        H^{2n}(M_n(\Sigma))
        \longrightarrow
        H^{2n}\bigl(h_n^{-1}(\Lambda^{2n-k-1})\bigr)
        \right).
\]
\end{cor}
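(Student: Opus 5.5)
The corollary will follow by specializing \cref{thm:dcm-flag} to $f=h_n$, $X=M_n(\Sigma)$, $Y=\Sym^n\C$ and $j=2n$. Three points need checking.

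\textbf{Hypotheses.} The map $h_n$ is proper, since it is the composite of the proper Hilbert--Chow morphism $\rho$ with the finite map $\Sym^n(p)$. Here $\Sym^n(p)$ is finite, because $p$ is proper with projective fibers $\Sigma$, so $\Sym^n(p)$ is proper; it is the quotient of the proper map $(\Sigma\times\C)^n\to\C^n$. The source $M_n(\Sigma)$ is smooth of dimension $2n$, because $\Sigma\times\C$ is a smooth surface. The base $\Sym^n\C\cong\A^n$ is affine, via the elementary symmetric functions. We therefore take the embedding $Y\hookrightarrow\A^N$ to be this identification, so $N=n$ and $Y_i=\Lambda^i$.

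\textbf{Indexing.} With $j=2n$, the theorem gives
\[
P_kH^{2n}(M_n)=\ker\bigl(H^{2n}(M_n)\to H^{2n}(h_n^{-1}(\Lambda^{2n-k-1}))\bigr).
\]
This is exactly the stated equality. The conventions must match. \cref{thm:dcm-flag} is stated with the shifted normalization $r(f)$ recalled above. Here $h_n$ has equidimensional fibers of dimension $n$, since the core computation in \cref{lem:core-components} spreads over all fibers by the local product structure, so $r(h_n)=n=d_X-d_Y$. Both normalizations therefore agree.

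\textbf{Flag genericity.} A good flag (\cref{def:dcm-generality}) is by definition chosen inside the dCM-open set of sufficiently general flags, so the theorem applies to it. The extra collision-transversality conditions of \cref{prop:collision-transverse-flags} cut out another nonempty Zariski open set. The intersection of two nonempty Zariski opens in the irreducible affine flag variety is nonempty, so good flags exist.

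\textbf{Boundary cases.} The range is $n\le k\le 2n$, so $2n-k-1\in\{-1,\dots,n-1\}$. For $k=2n$ the target is $h_n^{-1}(\emptyset)=\emptyset$, which gives $P_{2n}=H^{2n}$. For $k<n$ the index would exceed $n$, and then $\Lambda^{2n-k-1}=\Sym^n\C$ and the kernel is zero, consistent with the filtration vanishing there.

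The only step needing real care is the normalization match. I expect it to be routine once the equidimensionality of $h_n$ is noted.
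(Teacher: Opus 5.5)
Your proposal is correct and matches the paper, which obtains \cref{cor:indexing} directly by specializing \cref{thm:dcm-flag} to $h_n$ with $j=2n$, the identification $\Sym^n\C\cong\A^n$ (so $N=n$ and $r(h_n)=d_X-d_Y=n$, which is what makes the index $2n-k-1$ come out right), and a good flag chosen inside the dCM-open set. One small slip: $\Sym^n(p)$ is proper but not finite, since its fiber over $n\cdot 0$ is $\Sym^n\Sigma$; this is harmless because only properness is used.
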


\section{Restrictions and branch counts}

\begin{lemma}[Pure-dimensional fibers]
\label[lemma]{lem:fibers-equidimensional}
Let
\[
        y=\mu_1z_1+\cdots+\mu_sz_s\in \Sym^n\C,
        \qquad z_a\neq z_b,
\]
with \(\mu=(\mu_1,\ldots,\mu_s)\vdash n\).  Then
\[
        h_n^{-1}(y)
        \simeq
        \prod_{a=1}^s F_{\mu_a},
        \qquad
        F_m:=h_m^{-1}(m\cdot0)
        \subset \Hilb^m(\Sigma\times\C).
\]
In particular, every fiber of
\[
        h_n:M_n(\Sigma)\to\Sym^n\C
\]
is pure-dimensional of complex dimension \(n\).  
\end{lemma}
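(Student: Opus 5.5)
The plan is to reduce the fiber over $y$ to a product of local fibers over the distinct points $z_a$. Then each local fiber is identified, by a translation in the $\C$-direction, with the central fiber $F_{\mu_a}$ of a smaller Hilbert scheme.

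\textbf{Step 1: the subschemes in the fiber split.} A point $\xi\in h_n^{-1}(y)$ is a length-$n$ subscheme $Z\subset\Sigma\times\C$. Its pushforward cycle $p_*[Z]$ to $\C$ equals $\sum_a\mu_a z_a$. Hence $Z$ is set-theoretically supported in the disjoint union of the vertical curves $\Sigma\times\{z_a\}$. Choose pairwise disjoint open discs $B_a\ni z_a$ in $\C$. Since $\Sigma\times B_a$ are disjoint open sets, $Z$ decomposes uniquely as $Z=\bigsqcup_a Z_a$ with $Z_a\subset\Sigma\times B_a$ of length $\mu_a$.

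\textbf{Step 2: globalize the splitting.} The standard open-cover property of Hilbert schemes says that the locus of $\Hilb^n$ of subschemes whose $p$-image lies in $\bigsqcup_a B_a$ with $\mu_a$ points over $B_a$ is isomorphic (analytically) to $\prod_a\Hilb^{\mu_a}(\Sigma\times B_a)$. This is the same local-product structure already used in \cref{lem:core-components} and \cref{prop:very-stable-locus}, now applied to the projection $p$ rather than to the full support. Under this isomorphism, $h_n$ corresponds to the sum map $\prod_a\Sym^{\mu_a}B_a\to\Sym^n\C$. That map is injective near $y$ with fiber $\{\prod_a \mu_a z_a\}$, so
\[
h_n^{-1}(y)\simeq\prod_a h_{\mu_a}^{-1}(\mu_a z_a)\cap\Hilb^{\mu_a}(\Sigma\times B_a).
\]
Since the fiber is proper, this identification holds as algebraic varieties, not only analytically. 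One can also argue algebraically, using the scheme-theoretic decomposition of a finite scheme over a finite base $\operatorname{Spec}$ of the fiber, which is a product of local Artinian pieces.

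\textbf{Step 3: translate and count dimensions.} Translation $z\mapsto z-z_a$ is an automorphism of $\Sigma\times\C$ commuting with $p$ up to translation, so it identifies each factor with $F_{\mu_a}=h_{\mu_a}^{-1}(\mu_a\cdot 0)$. For pure-dimensionality, $F_m$ is the core $C_m$ of $M_m$. By \cref{lem:core-components} (applied with $n=m$), $C_m$ is the union of the $C_\lambda$, $\lambda\vdash m$, each irreducible of dimension $m$. Hence $F_m$ is pure of dimension $m$. A product of pure-dimensional varieties is pure-dimensional, of dimension $\sum_a\mu_a=n$.

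\textbf{Main obstacle.} I expect the delicate point to be Step 2: making the product decomposition precise scheme-theoretically, or at least on reduced structures, which suffices for the statement as used later. I would handle it by working over the base, that is, by pulling back the universal family along $h_n$ restricted to a neighborhood of $y$ and splitting the finite flat pushforward to $\C$ according to the connected components of its support.
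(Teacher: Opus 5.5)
Your proposal is correct and follows the same route as the paper: split each subscheme over the distinct points $z_a$ into its pieces over the vertical curves $\Sigma\times\{z_a\}$, identify each factor with $F_{\mu_a}$, and deduce pure-dimensionality from the fact that $F_m$ is the core of $M_m(\Sigma)$, which is pure of dimension $m$ by \cref{lem:core-components}. The paper simply asserts the unique decomposition and the resulting product isomorphism, so your Steps 2 and 3 only spell out what it leaves implicit: the local-product/open-cover argument, and the translation $z\mapsto z-z_a$ identifying $h_{\mu_a}^{-1}(\mu_a z_a)$ with $F_{\mu_a}$.
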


\begin{proof}
Every subscheme lying over the divisor \(y\) decomposes uniquely into its
parts supported over the distinct vertical fibers
\[
        \Sigma\times\{z_1\},\ldots,\Sigma\times\{z_s\}.
\]
This gives the displayed product decomposition
\[
        h_n^{-1}(y)\simeq\prod_{a=1}^sF_{\mu_a}.
\]
For each \(m\), the fiber
\[
        F_m=h_m^{-1}(m\cdot0)
\]
is the core of \(M_m(\Sigma)\).  By \cref{lem:core-components}, applied with
\(n=m\), this core is pure-dimensional of complex dimension \(m\).  Therefore
\(h_n^{-1}(y)\) is pure-dimensional of dimension
\[
        \sum_{a=1}^s\mu_a=n.\qedhere
\]
\end{proof}

The first part of the following lemma follows from the previous work of {\MZ}
\cite[pf. of Cor.~4.15]{MZ}, but we prove it here for completeness of the exposition.

\begin{lemma}[Local sheets of upward-flow cycles]
\label[lemma]{lem:vertical-sheets}
Let
\[
        \lambda=1^{\alpha_1}2^{\alpha_2}\cdots
\]
be a partition of \(n\), and let \(p\in F_\lambda^\circ\).  Write
\[
        p=(D_i)_{i\ge1},
        \qquad
        D_i=x_{i,1}+\cdots+x_{i,\alpha_i},
\]
with all \(x_{i,a}\in\Sigma\) pairwise distinct.  Then, after choosing this
point \(p\), the upward-flow variety \(W_p^+\) is naturally identified with
the space of divisors moving only in the vertical curves
\[
        \{x_{i,a}\}\times\C .
\]
More precisely, there is an identification
\[
        W_p^+
        \cong
        \prod_{i\ge1}\prod_{a=1}^{\alpha_i}\Sym^i\C,
\]
and under this identification the restriction
\[
        h_n|_{W_p^+}:W_p^+\longrightarrow \Sym^n\C
\]
is the symmetrization map
\[
        \prod_{i\ge1}\prod_{a=1}^{\alpha_i}\Sym^i\C
        \longrightarrow
        \Sym^n\C,
        \qquad
        (D_{i,a})_{i,a}\longmapsto \sum_{i,a}D_{i,a}.
\]
Consequently, over a point
\[
        y=\mu_1z_1+\cdots+\mu_sz_s\in S_\mu,
        \qquad z_b\neq z_{b'},
\]
a local sheet is determined by a matrix of nonnegative integers
\[
        (m_{i,a;b})
\]
such that
\[
        \sum_b m_{i,a;b}=i
        \quad\text{for every }(i,a),
        \qquad
        \sum_{i,a}m_{i,a;b}=\mu_b
        \quad\text{for every }b.
\]
The entry \(m_{i,a;b}\) records how much of the divisor over \(z_b\) lies on
the fixed vertical curve \(\{x_{i,a}\}\times\C\).
\end{lemma}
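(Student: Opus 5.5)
The plan is to first identify $W_p^+$ explicitly and then read off the restriction of $h_n$. I will exhibit a subvariety of $M_n$ with the predicted shape and show it equals $W_p^+$.

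Since the points $x_{i,a}\in\Sigma$ are pairwise distinct, the vertical curves $V_{i,a}:=\{x_{i,a}\}\times\C$ are disjoint closed curves in $\Sigma\times\C$. Consider the map
\[
\Phi:\prod_{i,a}\Sym^i\C\cong\prod_{i,a}\Sym^i V_{i,a}\longrightarrow M_n,\qquad (D_{i,a})\longmapsto \bigsqcup_{i,a} D_{i,a},
\]
where each $D_{i,a}$ is regarded as a length-$i$ subscheme of the smooth curve $V_{i,a}\subset\Sigma\times\C$, i.e.\ the subscheme cut out by $u=u(x_{i,a})$ and a monic degree-$i$ polynomial in $z$. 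Because the curves are disjoint, the union is a length-$n$ subscheme, and $\Phi$ is a closed embedding: it is proper and injective on points, and injective on tangent spaces by the local product structure of the Hilbert scheme near disjoint supports. $\Phi$ is $\C^*$-equivariant for the scaling action on each $\Sym^i\C$. As $t\to0$, every $D_{i,a}$ tends to $i\cdot 0$, the vertical fixed ideal $(u-u(x_{i,a}),z^i)$, so $\lim_{t\to0}t\cdot\Phi(D)=p$. Hence $\operatorname{Im}\Phi\subset W_p^+$.

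For the reverse inclusion I will argue by dimension and closedness. The image of $\Phi$ is closed, irreducible, smooth, and of dimension $\sum_i i\alpha_i=n$. By Białynicki--Birula, $W_p^+$ is an affine space whose dimension equals the number of positive weights on $T_pM_n$, which is $\dim M_n-\dim F_\lambda-\dim(\text{downward fiber})=2n-\ell(\lambda)-(n-\ell(\lambda))=n$, using the computation in \cref{lem:core-components}. So $W_p^+\cong\A^n$ is irreducible of dimension $n$ and contains the closed $n$-dimensional subvariety $\operatorname{Im}\Phi$, which forces equality. Closedness of $W_p^+$ from \cref{prop:very-stable-locus} is consistent with this but is not needed.

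Identifying the restriction of $h_n$ is then immediate. The Hilbert--Chow morphism sends $\bigsqcup D_{i,a}$ to the cycle $\sum_{i,a}\sum_{k}[(x_{i,a},z_k)]$, and projecting to $\C$ gives $\sum_{i,a}D_{i,a}\in\Sym^n\C$, which is the symmetrization map.

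Finally, consider the branch description over $y=\sum_b\mu_bz_b\in S_\mu$. A preimage is a tuple $(D_{i,a})$ with $\sum D_{i,a}=y$. Each $D_{i,a}$ must be supported on $\{z_b\}$, so it is determined by multiplicities $m_{i,a;b}$ satisfying $\sum_b m_{i,a;b}=i$, and the total multiplicity at $z_b$ gives $\sum_{i,a}m_{i,a;b}=\mu_b$. Near such a preimage, the symmetrization map is locally a product of the maps $\prod_{(i,a)}\Sym^{m_{i,a;b}}\C\to\Sym^{\mu_b}\C$ near $\mu_b z_b$, so each matrix corresponds to exactly one local sheet. Since the columns are labelled by the distinct points $z_b$ and the rows by the points $(i,a)$, this matches \cref{rem:labelled-columns} and, via \cref{lem:matrix-count}, the count $n_\lambda^\mu$ after grouping rows by part size.

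The main obstacle I expect is the reverse inclusion, in particular justifying the Białynicki--Birula dimension count of $W_p^+$ at a point of $F_\lambda$ (semiprojective, smooth). If that route proves awkward, a fallback is a direct argument: any $q$ with $\lim_{t\to0}t\cdot q=p$ has support flowing to the points $(x_{i,a},0)$. Since the $\C^*$-action fixes the $u$-coordinate, the support must already lie on the curves $V_{i,a}$, and the length over each $V_{i,a}$ is then $i$. One must still rule out non-curvilinear thickenings in the $u$-direction; I would do this by observing that $u-u(x_{i,a})$ is $\C^*$-invariant of weight $0$, so it must lie in the ideal of $q$ if it lies in the limit ideal.
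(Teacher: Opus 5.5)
Your proof is correct, and it takes a genuinely different route from the paper's.

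\textbf{The paper's route.} The paper works locally. It splits $M_n$ near $p$ into punctual factors at the distinct points $(x_{i,a},0)$. It then asserts that the attracting set of a vertical punctual factor of length $i$ is exactly $\Sym^i(\{x\}\times\C)$, because the attracting directions are the vertical ones.

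\textbf{Your route.} You build the global closed embedding $\Phi:\prod\Sym^i V_{i,a}\to M_n$. Equivariance gives $\operatorname{Im}\Phi\subseteq W_p^+$. For the reverse inclusion you use Bia\l{}ynicki--Birula: $W_p^+\cong\A^{n}$, with $n=2n-\ell(\lambda)-(n-\ell(\lambda))$. Hence the closed, irreducible, $n$-dimensional subvariety $\operatorname{Im}\Phi$ must fill all of $W_p^+$.

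\textbf{What each buys.} Your argument justifies the word ``precisely.'' Knowing the attracting tangent directions only pins down $T_pW_p^+$; your dimension count is what rules out non-vertical subschemes flowing into $p$. As a bonus, it shows $W_p^+$ is closed without appealing to \cite{MZ}, which is the forward direction of \cref{prop:very-stable-locus}. The cost is that you depend on the BB affine-bundle structure on the non-proper $M_n$, and on the downward-fiber count in \cref{lem:core-components}.

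You can make that count self-contained. Compute the weights on $\operatorname{Hom}(I,\mathcal{O}/I)$ at the vertical ideal $I=(u,z^i)$:
\begin{itemize}
\item the $i$ deformations $z^i\mapsto z^k$ with $0\le k<i$ are attracting;
\item the $i-1$ deformations $u\mapsto z^k$ with $1\le k<i$ are repelling;
\item the deformation $u\mapsto 1$ has weight zero.
\end{itemize}
You assert properness of $\Phi$ without comment. It follows because $h_n\circ\Phi$ is the finite symmetrization map. Alternatively, $\prod\Sym^iV_{i,a}$ is an open-and-closed piece of $\Hilb^n(\bigsqcup V_{i,a})$, which is closed in $M_n$.

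\textbf{A caution about your fallback.} The fallback would not work as you justify it. Invariance of $u-u(x_{i,a})$ only shows that it lies in all of the ideals of $t\cdot q$ ($t\neq 0$) or in none of them. Membership is a closed condition, so the function can appear in the limit ideal without lying in $I(q)$. For example, $(u-z,z^2)$ flows to $(u,z^2)$ as $t\to\infty$, yet $u\notin(u-z,z^2)$.

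What makes the claim true for $t\to 0$ is a different fact. The limit ideal is the initial ideal formed by the top-$z$-degree parts of elements of $I(q)$, and its $z$-degree-zero piece equals $I(q)\cap\C[u]$. Your main route does not need this, so it is only a caution.
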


\begin{proof}
Let
\[
        p=(D_i)_{i\ge1}\in F_\lambda^\circ,
        \qquad
        D_i=x_{i,1}+\cdots+x_{i,\alpha_i},
\]
with all points \(x_{i,a}\) pairwise distinct.  By the description of the
fixed component \(F_\lambda\), the point \(p\) is a product of fixed punctual
subschemes, each supported at one of the points $(x_{i,a},0)\in\Sigma\times\C$
on the corresponding vertical curve $\{x_{i,a}\}\times\C$.
Since the support points \((x_{i,a},0)\) are pairwise distinct, the Hilbert
scheme is locally, near these subschemes, the product of the corresponding
punctual Hilbert schemes.  Under this product description the \(\C^*\)-action
is the product of the standard scaling actions on the vertical coordinates.

For a single fixed punctual factor of length \(i\) supported at \((x,0)\),
the attracting set is precisely
\[
        \Sym^i(\{x\}\times\C)\cong \Sym^i\C.
\]
Indeed, the attracting directions are obtained by allowing the vertical
\(\C\)-coordinates to move along the vertical curve \(\{x\}\times\C\)
while keeping the \(\Sigma\)-coordinate equal to \(x\).  Taking the product over all \((i,a)\) gives
\[
        W_p^+
        \cong
        \prod_{i\ge1}\prod_{a=1}^{\alpha_i}\Sym^i\C.
\]

The map \(h_n\) records only the projection of the subscheme to \(\C\). Hence
a collection of divisors
\[
        (D_{i,a})_{i,a}\in
        \prod_{i\ge1}\prod_{a=1}^{\alpha_i}\Sym^i\C
\]
is sent to their sum
\[
        \sum_{i,a}D_{i,a}\in\Sym^n\C,
\]
which is exactly the displayed symmetrization map.

Finally, let
\[
        y=\mu_1z_1+\cdots+\mu_sz_s\in S_\mu,
        \qquad z_b\neq z_{b'}.
\]
A point of the reduced fiber over \(y\) is obtained by distributing, for each
vertical curve \(\{x_{i,a}\}\times\C\), a divisor of degree \(i\) among the
support points \(z_1,\ldots,z_s\).  Thus it is specified by nonnegative
integers \(m_{i,a;b}\) satisfying
\[
        \sum_b m_{i,a;b}=i,
        \qquad
        \sum_{i,a}m_{i,a;b}=\mu_b. \qedhere
\]
\end{proof}

For a good\footnote{Recall \cref{def:dcm-generality}} affine linear subspace
\[
        L\subset \Sym^n\C
\]
we write
\[
        M_L:=h_n^{-1}(L),
        \qquad
        i_L:M_L\hookrightarrow M_n
\]
for the inverse image and its inclusion.  If \(y\in L\), we write
\[
        M_y:=h_n^{-1}(y),
        \qquad
        i_y:M_y\hookrightarrow M_n,
        \qquad
        i_{y,L}:M_y\hookrightarrow M_L.
\]
Thus
\[
        i_y=i_L\circ i_{y,L}.
\]
By \cref{lem:fibers-equidimensional}, the fiber \(M_y\) is pure of
complex dimension \(n\).  Hence its top Borel--Moore homology is generated
by the fundamental classes of the irreducible components of
\(M_y\).  In particular, if
\(\beta\in H^{2n}(M_y)\), then one may pair \(\beta\) with each such
component, and also with the reduced fundamental class of its reduced scheme structure (the scheme structure coming from $h_n^{-1}(y)$ is generally non-reduced)
\[
        [(M_y)_{\red}]
        :=
        \sum_{C\in \operatorname{Irr}(M_y)}[C]
        \in H^{\BM}_{2n}(M_y).
\]

\begin{lemma}[Reduced fiber pairing]
\label[lemma]{lem:reduced-fiber-pairing}
Let \(y\in S_\mu\).  Then, choosing the representative
\(W_\lambda^+=W_p^+\) with \(p\in F_\lambda^\circ\), one has
\[
        \left\langle i_y^*U_\lambda,[(M_y)_{\mathrm{red}}]\right\rangle
        =
        n_\lambda^\mu .
\]
\end{lemma}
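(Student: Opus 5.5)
The plan is to compute the pairing as a geometric intersection number inside $M_n$. Since $U_\lambda=\PD[W_p^+]$, the restriction $i_y^*U_\lambda$ paired with a Borel--Moore cycle $Z\subset M_y$ equals the intersection product $[W_p^+]\cdot i_{y*}[Z]$ in $M_n$. This is well defined because $W_p^+$ is closed (\cref{prop:very-stable-locus}) and $M_y$ is proper: $M_y\cong\prod_a F_{\mu_a}$ by \cref{lem:fibers-equidimensional}, and each $F_m$ is a projective core. Applying this with $Z=(M_y)_{\red}$ reduces the claim to showing that
\[
        [W_p^+]\cdot\sum_{C\in\operatorname{Irr}(M_y)}[C]=n_\lambda^\mu .
\]

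The first step is to identify the set-theoretic intersection $W_p^+\cap M_y$. By \cref{lem:vertical-sheets}, this is the fiber over $y$ of the symmetrization map
\[
        \prod_{i,a}\Sym^i\C\to\Sym^n\C .
\]
That fiber is the finite set of matrices $(m_{i,a;b})$ with row sums $i$ and column sums $\mu_b$. The columns are labeled by the distinct points $z_b$, which is exactly the labeled-column convention of \cref{rem:labelled-columns}. By \cref{lem:matrix-count}, the number of such matrices is $n_\lambda^\mu$. So the intersection consists of $n_\lambda^\mu$ isolated points, each lying on $W_p^+$ and on $M_y$.

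The second step, which I expect to be the main obstacle, is to show that every local intersection multiplicity equals $1$, with each point counted on exactly one component of $(M_y)_{\red}$.

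For this I would localize. Near an intersection point $q$, the subscheme splits into pieces supported at the distinct points $(x_{i,a},z_b)$, and the Hilbert scheme is étale locally a product of punctual Hilbert schemes of $\A^2$ at these points. In these coordinates:
\begin{itemize}
\item $M_y$ is locally the product over $b$ of the cores $F_{\mu_b}$, and these in turn decompose near $q$ into the punctual pieces over $z_b$.
\item $W_p^+$ is locally the product of the vertical strata $\Sym^{m}(\{x\}\times\C)$ near $m\cdot z_b$.
\end{itemize}
So it suffices to treat one factor. This is the local picture near a punctual vertical scheme $\C[u,z]/(u,(z-z_b)^m)$ in $\Hilb^m(\A^2)$, shifted so that $z_b=0$. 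There:
\begin{itemize}
\item the vertical stratum is the upward flow of this fixed point for the $z$-scaling;
\item the core contains the downward flow $W^-$ through it, whose closure is the component $C_{(m)}$.
\end{itemize}
The upward and downward cells meet transversely at the fixed point, with complementary dimensions $m+m=2m$. So locally exactly one component of the fiber passes through $q$, namely the product of the $C_{(m_{i,a;b})}$-type components, reduced and transverse to $W_p^+$. This is the same transversality argument as in the duality lemma $[W_\lambda^+]\cdot[C_\mu]=\delta_{\lambda\mu}$. One point needs checking: that no other component of $M_y$ passes through $q$. I would deduce this from the very-stability property $W_{p'}^+\cap C=\{p'\}$ applied to each local vertical fixed point $p'$, via the "otherwise it is a limit" argument used there.

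With complex orientations, each of the $n_\lambda^\mu$ points then contributes $+1$. Summing gives the claimed value $n_\lambda^\mu$.
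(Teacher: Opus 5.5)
Your proposal is correct and follows essentially the same route as the paper. Both arguments count the points of $W_p^+\cap M_y$ via \cref{lem:vertical-sheets} and \cref{lem:matrix-count}, then show each point lies in the smooth locus of a single component of $(M_y)_{\red}$ and meets it transversely with local multiplicity $+1$. The differences are only in how these two local facts are justified. You get uniqueness of the component from the very-stability argument of the duality lemma and transversality from the upward/downward Bia\l{}ynicki--Birula splitting at the translated local fixed points. The paper instead uses a no-collision-of-$\Sigma$-supports argument and the equivalent horizontal/vertical tangent splitting.
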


\begin{proof}
Write
\[
        \lambda=1^{\alpha_1}2^{\alpha_2}\cdots .
\]
Choose
\[
        p=(D_i)_{i\ge 1}\in F_\lambda^\circ,
        \qquad
        D_i=x_{i,1}+\cdots+x_{i,\alpha_i},
\]
thus all support points \(x_{i,a}\in\Sigma\) are pairwise distinct.  

By \cref{lem:vertical-sheets}, the intersection
\[
        W_p^+\cap M_y
        =
        (h_n|_{W_p^+})^{-1}(y)
\]
is the underlying set of branches of the corresponding symmetrization map,
i.e.\ a finite set of points.  These points are indexed by nonnegative
integer matrices with row sums determined by \(\lambda\) and column sums
determined by \(\mu\).  By
\cref{lem:matrix-count}, the number of such points is \(n_\lambda^\mu\).

We now explain why this reduced count is the relevant pairing.  Let
\(M_{y,a}\) be an irreducible component of \((M_y)_{\mathrm{red}}\).  Since
\(U_\lambda=\PD[W_\lambda^+]\) on the smooth variety \(M_n\), we have
\[
        \bigl\langle i_y^*U_\lambda,[M_{y,a}]\bigr\rangle
        =  \bigl\langle U_\lambda,{i_y}_*[M_{y,a}]\bigr\rangle
        =[W_\lambda^+]\cdot {i_y}_*[M_{y,a}].
\]
Thus pairing with
\[
        [(M_y)_{\mathrm{red}}]=\sum_a[M_{y,a}]
\]
is the sum of these ordinary intersection numbers with the irreducible
components of the reduced fiber.

We now identify the component of \((M_y)_{\mathrm{red}}\)
containing each intersection point.  By \cref{lem:fibers-equidimensional}
and \cref{lem:core-components} applied to each factor, the irreducible
components of \((M_y)_{\mathrm{red}}\) are the products
\(\prod_a C_{\nu_a}\) with \(\nu_a\vdash\mu_a\), where
\(C_{\nu_a}=\overline{W^-_{F_{\nu_a}}}\subset F_{\mu_a}\).  An intersection
point sits, in each factor \(F_{\mu_a}\), at a \(\C^*\)-fixed configuration
of curvilinear vertical subschemes on the pairwise distinct vertical curves
\(\{x_{i,a}\}\times\C\); it is therefore a fixed point of
\(F_{\nu_a}\subset C_{\nu_a}\), where \(\nu_a\) is the multiset of vertical
lengths \(\{m_{i,a;b}: m_{i,a;b}>0\}\).  Since the \(x_{i,a}\) are pairwise
distinct, \(\nu_a\) already has the maximal number of parts compatible with
these local lengths; passing to any other \(C_{\nu'_a}\) in its closure
would require a collision of \(\Sigma\)-supports, which our intersection
point does not have.  Hence each intersection point lies in the smooth
locus of a unique component of \((M_y)_{\mathrm{red}}\).

In local product coordinates around such a point, the smooth surface
\(\Sigma\times\C\) factors as
\[
        \Sigma\times\C
        =
        \underbrace{\Sigma}_{\text{horizontal}}\times\underbrace{\C}_{\text{vertical}},
\]
and the Hilbert scheme \(M_n\) inherits a corresponding local splitting of
tangent directions into
\[
        T_{[Z]}M_n
        =
        T^{\Sigma}_{[Z]}\oplus T^{\C}_{[Z]} ,
\]
with \(\dim_\C T^{\Sigma}_{[Z]}=n\) coming from horizontal deformations of
support points and \(\dim_\C T^{\C}_{[Z]}=n\) coming from vertical
deformations of the curvilinear pieces.  By \cref{lem:vertical-sheets},
\(W_p^+\) is locally cut out by fixing the \(\Sigma\)-coordinates of the
support, so
\[
        T_{[Z]}W_p^+= T^{\C}_{[Z]}.
\]
Dually, the relevant component \(M_{y,a}\) of \((M_y)_{\mathrm{red}}\) is
locally cut out by fixing the vertical projection (i.e.\ the image under
\(h_n\)), so
\[
        T_{[Z]}M_{y,a}= T^{\Sigma}_{[Z]}.
\]
These two tangent subspaces are complementary, and their dimensions sum to
\(2n=\dim_\C M_n\); hence \(W_p^+\) and \(M_{y,a}\) meet transversely at
\([Z]\).  With the complex orientations, each local intersection
multiplicity is \(+1\).  Summing over the \(n_\lambda^\mu\) intersection
points,
\[
        \left\langle i_y^*U_\lambda,[(M_y)_{\mathrm{red}}]\right\rangle
        =
        \#\bigl((h_n|_{W_p^+})^{-1}(y)\bigr)_{\mathrm{red}}
        =
        n_\lambda^\mu. \qedhere
\]
\end{proof}

\begin{rmk}(Multiplicities of core components)
Due to  \cref{cor: fla za nlambda1n} we have explicit formula
\[
        n_\lambda^{1^n}
        =
        \frac{n!}{\prod_i\lambda_i!}.
\]
This is the generic-fiber intersection number with $U_\lambda$
\[
        W_\lambda^+\cap h_n^{-1}(y)
        =
        \frac{n!}{\prod_i\lambda_i!},
        \qquad y\in S_{1^n}=\Sym^n\C\setminus \Delta.
\]
These numbers also record the algebraic multiplicity of the corresponding irreducible core component
$$C_\lambda := \overline{\{ p\in M_n \mid \lim_{t\fun \infty} t\cdot p \in F_\lambda\}} $$
seen as a component of the scheme $h_n^{-1}(0)$. The latter are
computed\footnote{For the $\Sigma=E$ case, but the general case works verbatim} in  \cite[Cor.~4.14]{MZ}, and the equality between the two 
is a general principle from \cite[Thm~1.3]{hausel2022very}. 
\end{rmk}

\begin{proposition}[Restriction implies incidence equations]
\label[proposition]{prop:restriction-implies-incidence}
Let \(L\subset\Sym^n\C\) be a good\footnote{Recall \cref{def:dcm-generality}.} affine linear subspace of dimension
\(d\), and let
\[
        \alpha=\sum_{\lambda\vdash n}c_\lambda U_\lambda
        \in H^{2n}(M_n).
\]
If
\[
        i_L^*\alpha=0\in H^{2n}(M_L),
\]
then
\[
        \sum_{\lambda\vdash n}c_\lambda n_\lambda^\mu=0
\]
for every partition \(\mu\vdash n\) such that
\[
        L\cap S_\mu\neq\emptyset .
\]
Equivalently, by the incidence property of a good affine subspace, this holds
for every \(\mu\) satisfying
\[
        \ell(\mu)+d-n\ge 0.
\]
\end{proposition}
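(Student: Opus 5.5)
The plan is to factor the restriction through a single fiber and then use the pairing computed in \cref{lem:reduced-fiber-pairing}. Fix a partition \(\mu\vdash n\) with \(L\cap S_\mu\neq\emptyset\), and choose any point \(y\in L\cap S_\mu\). Since \(i_y=i_L\circ i_{y,L}\), functoriality of pullback gives
\[
        i_y^*\alpha=i_{y,L}^*\bigl(i_L^*\alpha\bigr)=0\in H^{2n}(M_y).
\]
In particular the pairing of \(i_y^*\alpha\) with any Borel--Moore class in \(H^{\BM}_{2n}(M_y)\) vanishes. By \cref{lem:fibers-equidimensional}, \(M_y\) is pure of dimension \(n\), so the reduced fundamental class \([(M_y)_{\red}]\) is a well-defined element of \(H^{\BM}_{2n}(M_y)\). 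Therefore \(\langle i_y^*\alpha,[(M_y)_{\red}]\rangle=0\).

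Next I expand this pairing by linearity in \(\alpha=\sum_\lambda c_\lambda U_\lambda\):
\[
        0=\bigl\langle i_y^*\alpha,[(M_y)_{\red}]\bigr\rangle
        =\sum_{\lambda\vdash n}c_\lambda\bigl\langle i_y^*U_\lambda,[(M_y)_{\red}]\bigr\rangle
        =\sum_{\lambda\vdash n}c_\lambda n_\lambda^\mu .
\]
The last equality is \cref{lem:reduced-fiber-pairing}, applied to each \(\lambda\).

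The only subtle point is that \cref{lem:reduced-fiber-pairing} is proved using a specific representative \(W_p^+\) with \(p\in F_\lambda^\circ\). The pairing \(\langle i_y^*U_\lambda,-\rangle\) depends only on the cohomology class \(U_\lambda\). By the duality lemma preceding the definition of \(U_\lambda\), this class is independent of the choice of \(p\in F_\lambda^\circ\). So for every \(\lambda\) we may choose \(p\) freely, with the same \(y\) throughout. This is the step I would check most carefully. I would confirm that the class \([W_p^+]\) is honestly independent of \(p\): it is determined by its pairings with the basis \([C_\mu]\) of \(H_{2n}(M_n)\), and Poincaré duality pairs \(H^{\BM}_{2n}\) perfectly with \(H_{2n}\) on \(M_n\) via the core retraction. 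I would also confirm that \cref{lem:reduced-fiber-pairing} holds for every \(y\in S_\mu\), with no further genericity needed.

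Finally, for the equivalent formulation I invoke \cref{prop:collision-transverse-flags}. For a good \(L\) of dimension \(d\), the intersection \(L\cap S_\mu\) is nonempty exactly when \(d+\ell(\mu)\ge n\). Hence the set of \(\mu\) with \(L\cap S_\mu\neq\emptyset\) is precisely the set with \(\ell(\mu)+d-n\ge0\), which gives the restated condition.
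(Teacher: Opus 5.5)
Your proposal is correct and is essentially the paper's own proof: for a point $y\in L\cap S_\mu$ you factor $i_y=i_L\circ i_{y,L}$, pair the vanishing class $i_y^*\alpha$ with $[(M_y)_{\red}]$, and evaluate the pairing termwise using \cref{lem:reduced-fiber-pairing}. Your extra remarks only make explicit what the paper leaves implicit, and they agree with its setup: $U_\lambda$ does not depend on the choice of $p\in F_\lambda^\circ$, $y$ need only lie in $S_\mu$, and \cref{prop:collision-transverse-flags} gives the equivalent length condition.
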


\begin{proof}
Let \(\mu\vdash n\) be such that \(L\cap S_\mu\neq\emptyset\), and choose a
general point
\[
        y\in L\cap S_\mu .
\]
Since \(i_y=i_L\circ i_{y,L}\), ordinary cohomology pullback gives
\[
        i_y^*\alpha
        =
        i_{y,L}^*i_L^*\alpha
        =0
        \in H^{2n}(M_y).
\]
Pairing with the reduced fundamental class of the fiber gives
\[
        0
        =
        \left\langle i_y^*\alpha,[(M_y)_{\mathrm{red}}]\right\rangle
        =
        \sum_{\lambda\vdash n}c_\lambda
        \left\langle i_y^*U_\lambda,[(M_y)_{\mathrm{red}}]\right\rangle.
\]
By \cref{lem:reduced-fiber-pairing}, the last expression is
\[
        \sum_{\lambda\vdash n}c_\lambda n_\lambda^\mu. \qedhere
\]

\end{proof}

\section{Perverse filtration on \(H^{2n}(M_n)\)}

\begin{proposition}[Incidence equations from the dCM criterion]
\label[proposition]{prop:support-dcm}
Let
\[
        \alpha=\sum_{\lambda\vdash n}c_\lambda U_\lambda \in P_kH^{2n}(M_n).
\]
Then
\[
        \sum_{\lambda\vdash n}c_\lambda n_\lambda^\mu=0
        \qquad
        \text{for all }\mu\vdash n\text{ such that }
        \ell(\mu)>k-n.
\]
\end{proposition}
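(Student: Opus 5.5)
The plan is to combine \cref{cor:indexing} with \cref{prop:restriction-implies-incidence}, so the argument is essentially index bookkeeping. Fix a good flag $\Lambda^{-1}=\emptyset\subset\Lambda^0\subset\cdots\subset\Lambda^n=\Sym^n\C$ as in \cref{def:dcm-generality}. By \cref{cor:indexing}, the hypothesis $\alpha\in P_kH^{2n}(M_n)$ means that $i_L^*\alpha=0\in H^{2n}(M_L)$, where $L:=\Lambda^{d}$ and $d:=2n-k-1$.

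The boundary values of $d$ are handled first. If $d\ge n$ (that is, $k\le n-1$), we interpret $\Lambda^d$ as $\Sym^n\C$ itself. Then $M_L=M_n$ and $i_L^*\alpha=\alpha$, so $\alpha=0$, all $c_\lambda$ vanish, and the conclusion is trivial. If $d<0$ (that is, $k\ge 2n$), then $L=\emptyset$. In that case the condition $\ell(\mu)>k-n\ge n$ is never satisfied, so there is nothing to prove.

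In the main range $0\le d\le n-1$, the subspace $L=\Lambda^d$ is a good affine linear subspace of dimension $d$. Applying \cref{prop:restriction-implies-incidence} gives $\sum_\lambda c_\lambda n_\lambda^\mu=0$ for every $\mu$ with $\ell(\mu)+d-n\ge0$. Substituting $d=2n-k-1$, this condition reads $\ell(\mu)\ge k-n+1$, which is exactly $\ell(\mu)>k-n$. That is the claimed set of $\mu$.

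The only genuine subtlety is justifying that $\Lambda^d$, as a member of a good flag, satisfies the incidence property used in \cref{prop:restriction-implies-incidence}. This is the nonemptiness of $\Lambda^d\cap S_\mu$ whenever $d+\ell(\mu)\ge n$, and it is exactly the conclusion of \cref{prop:collision-transverse-flags}, which good flags satisfy by definition. There is also a mild point that \cref{prop:restriction-implies-incidence} picks a general point $y\in L\cap S_\mu$. However, \cref{lem:reduced-fiber-pairing} holds for every $y\in S_\mu$, so any point of the nonempty intersection suffices. No further geometric input is needed.
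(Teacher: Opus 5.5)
Your proposal is correct and is essentially the paper's own proof. It has the same two boundary cases: $k\ge 2n$ is vacuous, and for $k<n$ the dCM restriction is the identity, so $\alpha=0$. It then applies \cref{cor:indexing} with $L=\Lambda^{2n-k-1}$ together with \cref{prop:restriction-implies-incidence}, and rewrites $\ell(\mu)+n-k-1\ge 0$ as $\ell(\mu)>k-n$, exactly as the paper does (your remark that any $y\in L\cap S_\mu$ suffices, because \cref{lem:reduced-fiber-pairing} holds for all $y\in S_\mu$, is a harmless clarification the paper leaves implicit).
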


\begin{proof}
The boundary case \(k\ge 2n\) is immediate, since there is no partition
\(\mu\vdash n\) with \(\ell(\mu)>k-n\).  If \(k<n\), then
\(2n-k-1\ge n\), so the dCM restriction is the identity map on
\(H^{2n}(M_n)\).  Hence \(P_kH^{2n}(M_n)=0\), and the statement is vacuous.
Thus we may assume \(n\le k<2n\).

Choose a good flag
\[
        \Lambda^0\subset\Lambda^1\subset\cdots\subset\Lambda^n=\Sym^n\C,
        \qquad
        \dim\Lambda^i=i.
\]
By \cref{cor:indexing}, condition \(\alpha\in P_kH^{2n}(M_n)\) implies that
\(\alpha\) restricts trivially to
\[
        M_L:=h_n^{-1}(L),
        \qquad
        L:=\Lambda^{2n-k-1}.
\]
Applying \cref{prop:restriction-implies-incidence}, we get
\[
        \sum_{\lambda\vdash n}c_\lambda n_\lambda^\mu=0
\]
for every 
\(\mu\) with

\[
        \ell(\mu)+n-k-1\ge0,
\]
or equivalently
\[
        \ell(\mu)>k-n. \qedhere
\]
\end{proof}

We now combine the incidence equations with the symmetric-function calculation.  By \cref{lem:upward-flow-basis}, the classes \(\{U_\mu\}\) form
a basis of
\[
        V_n:=H^{2n}(M_n).
\]
By \cref{lem:h-basis}, the complete homogeneous symmetric functions \(\{h_\mu\}\) form a basis of
\(\Lambda^n\).  We may therefore define a linear isomorphism
\[
        \psi:\Lambda^n\xrightarrow{\;\iso\;}V_n,
        \qquad
        h_\mu\longmapsto U_\mu,
\]
which we will refer to as the \emph{geometric dictionary}. 
Recalling the products of Newton power sums $p_\lambda=p_{\lambda_1}\cdot \dots \cdot p_{\lambda_{\ell(\lambda)}}$, set
\[
        \eta_\lambda:=\psi(p_\lambda).
\]
Equivalently, with the coefficients \(a_{\lambda\mu}\) of \cref{def:a-lambda-mu},
\[
        \eta_\lambda=\sum_{\mu\vdash n}a_{\lambda\mu}U_\mu.
\]

The main theorem of the paper is the following.  

\begin{theorem}\label[theorem]{thm:main}
For every \(k\),
\[
        P_kH^{2n}(M_n)
        =
        \Span\{\eta_\lambda \mid \ell(\lambda)\le k-n\}.
\]
Equivalently,
\[
        \Gr^P_kH^{2n}(M_n)
        =
        \Span\{[\eta_\lambda]\mid \ell(\lambda)=k-n\}.
\]
\end{theorem}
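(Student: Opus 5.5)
The plan is a containment-plus-dimension count. Denote by \(Q_k:=\Span\{\eta_\lambda\mid \ell(\lambda)\le k-n\}\) the right-hand side. First treat the trivial ranges. For \(k<n\) the set of \(\lambda\) with \(\ell(\lambda)\le k-n\) is empty. The proof of \cref{prop:support-dcm} already shows \(P_kH^{2n}(M_n)=0\) there, so both sides vanish. For \(k\ge 2n\), \(Q_k\) contains all \(\eta_\lambda\), which form a \(\Q\)-basis of \(V_n\) by \cref{lem:p-basis} transported through \(\psi\). On the other side, \(P_k=V_n\) because the dCM restriction is to \(X_{-1}=\emptyset\). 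So assume \(n\le k<2n\).

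Next I show \(P_kH^{2n}(M_n)\subseteq Q_k\). The key observation is that the incidence equations of \cref{prop:support-dcm} are coefficient extraction in the geometric dictionary. For \(\alpha=\sum c_\lambda U_\lambda=\psi(f)\) with \(f=\sum c_\lambda h_\lambda\), \cref{def:n-lambda-mu} gives
\[
\sum_\lambda c_\lambda n_\lambda^\mu=[m_\mu]f .
\]
So \cref{prop:support-dcm} says that \(\alpha\in P_k\) forces \([m_\mu]f=0\) for all \(\ell(\mu)>k-n\). In other words, \(f\) lies in \(\Span_\Q\{m_\mu\mid \ell(\mu)\le k-n\}\). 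I then claim that this span equals \(\Span_\Q\{p_\lambda\mid\ell(\lambda)\le k-n\}\). \Cref{lem:p-support-length} gives the inclusion of the \(p\)-span into the \(m\)-span. Both spaces have dimension \(\#\{\lambda\vdash n\mid \ell(\lambda)\le k-n\}\), because the \(p_\lambda\) are linearly independent by \cref{lem:p-basis}. Hence they are equal, and applying \(\psi\) gives \(\alpha\in Q_k\).

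For the reverse inclusion I would avoid proving that the incidence equations are sufficient, since that would need injectivity information about the restriction \(H^{2n}(M_n)\to H^{2n}(M_L)\). Instead I use the rank formula \cref{prop:intro-sz-rank}: \(\dim P_k=\#\{\lambda\mid\ell(\lambda)\le k-n\}=\dim Q_k\). Together with \(P_k\subseteq Q_k\) this gives equality. The graded statement then follows because the \(\eta_\lambda\) form a basis adapted to the filtration \(Q_\bullet\). The classes of those \(\eta_\lambda\) with \(\ell(\lambda)=k-n\) therefore form a basis of \(Q_k/Q_{k-1}=\Gr^P_k\).

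The main obstacle is the dimension equality. The rank formula is quoted from Shen--Zhang in the elliptic case, but the paper claims the result for arbitrary \(\Sigma\). My first attempt would be to argue that \(\dim P_kH^{2n}\) is independent of \(\Sigma\). The local structure of \(h_n\) over \(\Sym^n\C\) only sees the collision strata and the punctual cores, and the decomposition theorem summands supported on \(\overline{S_\mu}\) contribute in top degree by a count of components that is independent of genus. If that fails, I would instead prove the upper bound \(\dim P_k\ge\dim Q_k\) directly. For this I would show that each \(\eta_\lambda\) with \(\ell(\lambda)\le k-n\) restricts to zero on \(M_L\). The idea is to check it on the irreducible components of \(M_L\): these components lie over the strata \(S_\mu\) met by \(L\), which have \(\ell(\mu)>k-n\), and the pairings there are exactly the \(m_\mu\)-coefficients, which vanish for \(p_\lambda\). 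This route would additionally require knowing that \(H^{2n}(M_L)\) is detected by such fiberwise pairings.
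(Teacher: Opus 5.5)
Your proposal is correct and matches the paper's proof: the paper likewise obtains $P_kH^{2n}(M_n)\subseteq K_k$ from \cref{prop:support-dcm}, identifies $K_k$ with $\psi\bigl(\Span\{m_\mu\mid \ell(\mu)\le k-n\}\bigr)=\Span\{\eta_\lambda\mid\ell(\lambda)\le k-n\}$ using \cref{lem:p-support-length} and a dimension count from \cref{lem:p-basis}, and then upgrades the inclusion to an equality with the Shen--Zhang rank formula. The paper applies that rank formula directly for arbitrary $\Sigma$ with no further justification, so your main route already coincides with it, and the fallback arguments you sketch for the dimension equality are not part of the paper's proof.
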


\begin{proof}
Let
\[
        V_n:=H^{2n}(M_n),
\]
and define
\[
        K_k:=
        \left\{
        \sum_{\lambda\vdash n}c_\lambda U_\lambda
        \;\middle|\;
        \sum_{\lambda\vdash n}c_\lambda n_\lambda^\mu=0
        \text{ for all }\mu\text{ with }\ell(\mu)>k-n
        \right\}.
\]
By \cref{prop:support-dcm}, we have an inclusion
\[
        P_kH^{2n}(M_n)\subseteq K_k.
\]

We now identify \(K_k\) using the symmetric-function dictionary
\(\psi(h_\lambda)=U_\lambda\).  Let
\[
        f=\sum_{\lambda\vdash n}c_\lambda h_\lambda\in \Lambda^n
\]
be the symmetric function corresponding to
\(\sum_\lambda c_\lambda U_\lambda\).  By the definition of the coefficients
\(n_\lambda^\mu\), one has
\[
        h_\lambda=\sum_{\mu\vdash n} n_\lambda^\mu m_\mu .
\]
Therefore the coefficient of \(m_\mu\) in \(f\) is
\[
        \sum_{\lambda\vdash n}c_\lambda n_\lambda^\mu .
\]
Thus the defining equations of \(K_k\) are exactly the equations saying that
all monomial coefficients of \(f\) indexed by partitions
\(\mu\) with
\[
        \ell(\mu)>k-n
\]
vanish.  Equivalently, \(K_k\) is the image under \(\psi\) of the subspace of
\(\Lambda^n\) spanned by the monomial symmetric functions
\(m_\mu\) with \(\ell(\mu)\le k-n\).  Since the monomial symmetric functions form
a basis of \(\Lambda^n\), these equations cut out a subspace of dimension
\[
        \#\{\lambda\vdash n\mid \ell(\lambda)\le k-n\}=\dim K_k.
\]
Let us now check explicitly that the proposed power-sum classes lie in
\(K_k\).  Fix a partition \(\lambda\vdash n\) with
\[
        \ell(\lambda)\le k-n.
\]
By definition,
\[
        \eta_\lambda=\psi(p_\lambda).
\]
For a forbidden partition \(\mu\), meaning
\[
        \ell(\mu)>k-n,
\]
we also have
\[
        \ell(\mu)>\ell(\lambda).
\]
By \cref{lem:p-support-length}, no monomial symmetric function indexed by
such a \(\mu\) can occur in the monomial expansion of \(p_\lambda\).  Thus the
\(\mu\)-coefficient of \(p_\lambda\) is zero for every forbidden \(\mu\).  In
the dictionary \(\psi(h_\lambda)=U_\lambda\), the defining equations of
\(K_k\) are precisely the vanishing of these forbidden monomial coefficients.
Hence
\[
        \eta_\lambda\in K_k.
\]
Therefore
\[
        \Span\{\eta_\lambda\mid \ell(\lambda)\le k-n\}
        \subseteq K_k.
\]
Polynomials \(p_\lambda\) are linearly independent by
\cref{lem:p-basis}, thus so are classes $\eta_\lambda$. 
Since their number in $K_k$ is equal to its dimension, we
obtain
\[
        K_k=
        \Span\{\eta_\lambda\mid \ell(\lambda)\le k-n\}.
\]

Finally, by the rank formula of Shen--Zhang \cite[Prop~1.2]{ShenZhang}
\[
        \dim P_kH^{2n}(M_n)
        =
        \#\{\lambda\vdash n\mid \ell(\lambda)\le k-n\}
        =
        \dim K_k.
\]
The inclusion \(P_kH^{2n}(M_n)\subseteq K_k\) is therefore an equality.  This
proves
\[
        P_kH^{2n}(M_n)
        =
        \Span\{\eta_\lambda\mid \ell(\lambda)\le k-n\}.
\]
The statement for the associated graded follows immediately by taking the
quotient
\[
        P_kH^{2n}(M_n)/P_{k-1}H^{2n}(M_n).
\qedhere
\]
\end{proof}

\subsection{The case \(n=2\)}\label{ssec:n-equals-2}

We illustrate the dictionary in the smallest non-trivial case.  Order
partitions of \(2\) by
\[
        2,\quad 11.
\]
The incidence matrix $N=(n_\lambda^\mu)$ is
\[
\begin{array}{c|cc}
\mu\backslash \lambda
&2&11\\
\hline
2  &1&1\\
11 &1&2
\end{array}
\]
Thus by \cref{prop:support-dcm}, solving equation for $\mu=(11)$
$$c_2 n_2^{11} + c_{11}n_{11}^{11} = c_2 \cdot 1+ c_{11}\cdot 2 =0$$
gives us directly
$$P_3=\langle \sum c_\lambda U_\lambda \rangle =\langle 2U_2-U_{11}\rangle$$
On the other hand, the power-sum expansions in basis of complete homogeneous symmetric functions
are
\[
        p_2=2h_2-h_{11},
        \qquad
        p_{11}=h_{11}.
\]
Therefore
\[
        \eta_2=2U_2-U_{11},
        \qquad
        \eta_{11}=U_{11},
\]
and the perverse filtration by \cref{thm:main} is
\[
        P_3H^4(M_2)=\langle \eta_2\rangle,
        \qquad
        P_4H^4(M_2)=\la \eta_2, \eta_{11} \ra = H^4(M_2).
\]
The class \(\eta_2=2U_2-U_{11}\) is the unique (up to scalar) combination of
upward-flow classes whose intersection with a generic fiber of \(h_2\)
vanishes.

\subsection{The case \(n=3\)}

Continuing the computation of \cref{ex:n-equals-3} via the dictionary
\(\psi\), we obtain
\[
        \eta_3=3U_3-3U_{21}+U_{111},
        \qquad
        \eta_{21}=2U_{21}-U_{111},
        \qquad
        \eta_{111}=U_{111}.
\]
Hence 
\[
        P_4H^6(M_3)=\langle\eta_3\rangle,
        \qquad
        P_5H^6(M_3)=\langle\eta_3,\eta_{21}\rangle,
        \qquad
        P_6H^6(M_3)=H^6(M_3).
\]

\subsection{The case \(n=4\)}

Write partitions in the order
\[
        4,\quad 31,\quad 22,\quad 211,\quad 1111.
\]
The incidence matrix \(N=(n_\lambda^\mu)\), with rows indexed by collision
types \(\mu\) and columns indexed by upward-flow classes \(U_\lambda\), is
\[
\begin{array}{c|ccccc}
\mu\backslash \lambda
&4&31&22&211&1111\\
\hline
4      &1&1&1&1&1\\
31     &1&2&2&3&4\\
22     &1&2&3&4&6\\
211    &1&3&4&7&12\\
1111   &1&4&6&12&24
\end{array}
\]
This gives us the equations for perverse filtration. For example, to obtain 
$P_6$ one has to solve equations that correspond to partitions $\mu$
with $l(\mu)>6-n=2$. Therefore corresponding to $\mu=211$ and $1111$, we get respectively:
\[
        c_4+3c_{31}+4c_{22}+7c_{211}+12c_{1111}=0, 
        \] \[
        c_4+4c_{31}+6c_{22}+12c_{211}+24c_{1111}=0.
\]
The space of all solutions $c_\lambda$ gives us $P_6=\{\sum_\lambda c_\lambda U_\lambda\}$.

On the other hand, the Newton power sums expansion in the 
\(h\)-basis 
yields
\[
\begin{pmatrix}
\eta_4\\
\eta_{31}\\
\eta_{22}\\
\eta_{211}\\
\eta_{1111}
\end{pmatrix}
=
\begin{pmatrix}
4&-4&-2&4&-1\\
0&3&0&-3&1\\
0&0&4&-4&1\\
0&0&0&2&-1\\
0&0&0&0&1
\end{pmatrix}
\begin{pmatrix}
U_4\\
U_{31}\\
U_{22}\\
U_{211}\\
U_{1111}
\end{pmatrix},
\]
Thus the perverse filtration is
\[
        P_5H^8(M_4)
        =
        \langle \eta_4\rangle,
\ \
        P_6H^8(M_4)
        =
        \langle \eta_4,\eta_{31},\eta_{22}\rangle,
\ \
        P_7H^8(M_4)
        =
        \langle \eta_4,\eta_{31},\eta_{22},\eta_{211}\rangle,
\ \
        P_8H^8(M_4)=H^8(M_4).
\]

\begin{remark}[Relation with the Schur basis]
Under the symmetric-function dictionary
\[
        U_\mu\leftrightarrow h_\mu,
        \qquad
        \eta_\lambda\leftrightarrow p_\lambda,
\]
one may also introduce classes $\sigma_\nu \in H^{2n}(M_n)$ corresponding to the Schur
functions \(s_\nu\).  Then the Frobenius character formula gives
\[
        \eta_\lambda
        =
        \sum_{\nu\vdash n}\chi^\nu(\lambda)\sigma_\nu,
\]
where \(\chi^\nu(\lambda)\) is the value of the irreducible character of
\(S_n\) indexed by \(\nu\) on the conjugacy class of cycle type \(\lambda\).
Thus the transition matrix from the perverse to this Schur basis is the
character table of \(S_n\).
\end{remark}

\bibliographystyle{amsalpha}
\bibliography{FZ}

\end{document}